\newbox\tr@tto
\def\medint{\displaystyle\copy\tr@tto\kern-10.4pt\int}
\def\Xint#1{\mathchoice
   {\XXint\displaystyle\textstyle{#1}}%
   {\XXint\textstyle\scriptstyle{#1}}%
   {\XXint\scriptstyle\scriptscriptstyle{#1}}%
   {\XXint\scriptscriptstyle\scriptscriptstyle{#1}}%
   \!\int}
\def\XXint#1#2#3{{\setbox0=\hbox{$#1{#2#3}{\int}$}
     \vcenter{\hbox{$#2#3$}}\kern-.5\wd0}}
\def\dashint{\Xint-}
\newcommand{\R}{{\mathbb R}}
\newcommand{\N}{{\mathbb N}}
\newcommand{\Le}{{\mathcal L}}
\renewcommand{\H}{{\mathcal H}}
\renewcommand{\L}{{\mathcal L}}
\newcommand{\supp}{\mathop{\rm supp}}
\newcommand{\Int}{\mathop{\rm Int}\nolimits}
\newcommand{\Cl}{\mathop{\rm Cl}\nolimits}
\newcommand{\Capp}{\mathop{\rm Cap}\nolimits}
\newcommand{\Var}{\mathop{\rm Var}}
\def\ve{\varepsilon}
\newcommand{\loc}{{\rm loc}}
\newcommand{\diam}{\mathop{\rm diam}}
\newcommand{\LL}{\mathrm{L}}
\newcommand{\WW}{\mathrm{W}}
\newcommand{\CC}{\mathrm{C}}
\newcommand{\BV}{\mathrm{BV}}
\newtheorem{lem}{Lemma}[section]
\newtheorem{ttt}[lem]{Theorem}
\newtheorem{cor}[lem]{Corollary}
\theoremstyle{definition}
\newtheorem{df}[lem]{Definition}
\newcommand{\dd}{{\rm d}}
\title{ON THE MORSE-SARD PROPERTY AND LEVEL SETS OF SOBOLEV AND BV FUNCTIONS}
\author{Jean Bourgain, Mikhail V.~Korobkov\footnote{The author was supported by the Russian
Foundation for Basic Research (project no. 08-01-00531-a)and by
Federal Target Grant ``Scientific and educational personnel of
innovation Russia'' for 2009-2013 (government contract No.
02.740.11.0457).} \, and Jan Kristensen\footnote{Work supported by the EPSRC Science 
and Innovation award to the Oxford Centre for Nonlinear PDE (EP/E035027/1)}}
\date{}
\begin{document}

\maketitle


%
%

\begin{abstract}
We establish Luzin $N$ and Morse--Sard properties for $\BV_2$-functions defined on 
open domains in the plane. Using these results we prove that almost all level sets
are finite disjoint unions of Lipschitz arcs whose tangent vectors
are of bounded variation. In the case of $\WW^{2,1}$--functions we strengthen the
conclusion and show that almost all level sets are finite disjoint unions
of $\CC^1$--arcs whose tangent vectors are absolutely continuous.

\medskip
\noindent 
{\bf Key words:} {\it $\BV^2$ and $\WW^{2,1}$--functions, Luzin $N$--property,
Morse--Sard property, level sets.}
\end{abstract}

\section*{Introduction}

For $\CC^2$--smooth functions $v\colon \Omega\to\R$, defined on an open 
subset $\Omega$ of $\R^2$, the classical Morse--Sard theorem \cite{Mo}, \cite{S} 
(see also~\cite{Fed} or \cite{Hi}) guarantees that
\begin{equation}
\label{in1} \Le^1(v(Z_v))=0,
\end{equation}
where $\Le^1$ is the 1--dimensional Lebesgue measure on $\R$ and $Z_v$ is
the critical set of $v$, $Z_v=\{x\in\Omega\,:\,\nabla v(x)=0\}$. Whitney
demonstrated~\cite{Wh} that the $\CC^2$--smoothness condition in the
above assertion cannot be dropped. Namely, he constructed a
$\CC^1$--smooth function $v\colon (0, 1)^2\to\R$ for which the set $Z_v$ 
of critical points contains an arc on which $v$ is not constant (subsequently
called a Whitney arc).

However, some analogs of Sard's theorem are valid for the
functions lacking the required smoothness in the classical theorem. Although~(\ref{in1})
may be no longer valid then, A.~Ya.~Dubovitski\u{i}~\cite{Du}
obtained some results on the structure of level sets in the case
of reduced smoothness (also see~\cite{B}).

Another Sard--type theorem was obtained by A.V.~Pogorelov (see
\cite[Chapter~9, Section~4]{Po}): For a function $v\in
\CC^1(\Omega)$ on a plane domain $\Omega$, the equality~(\ref{in1}) holds if 
for any linear map $L\colon \R^2\to\R$ the sum $v(x)+L(x)$ satisfies the 
maximum principle (see also~\cite{Ko06} for another proof of this result). In
particular, the equality~(\ref{in1}) holds if the gradient range
$\nabla v(\Omega)$ has no interior points (see also
\cite{Ko06,Ko09,K07}).

Another direction of the research was the generalization of Sard's
theorem to functions in H\"{o}lder and Sobolev spaces (for example,
see~\cite{B,DeP,Fig,KK05,Nor}). In particular, De~Pascale (see also \cite{Fig}) 
proved that~(\ref{in1}) holds when $v\in
\WW^{2,p}_\loc(\Omega)$ for $p>2$. Note that in this case $v$ is
$\CC^1$--smooth by virtue of the Sobolev imbedding theorem, and so the critical
set is defined as usual.

In the paper~\cite{Bu} it was proved that for functions $v\in
\WW^{2,p}_\loc(\R^2)$ with $p>1$ there are no Whitney arcs.

Landis \cite{La} proved that the equality~(\ref{in1}) holds if
$v\colon \Omega\to\R$ is a difference of two convex functions (sometimes
called a d.c.-function), a result which answered a question raised previously
by A.V.~Pogorelov. D.~Pavlica and L.~Zaj\'{i}\v{c}ek \cite{PZ}
presented the detailed and modern proof of the Landis result.
Moreover, they proved in~\cite{PZ} that the equality~(\ref{in1})
holds for Lipschitz functions $v\in \BV_{2,\loc}(\Omega)$, where
$\BV_{2,\loc}(\Omega)$ is the space of functions $v\in
\LL^{1}_\loc(\Omega)$ such that all its partial (distributional)
derivatives of the second order are $\R$-valued Radon measures on
$\Omega$.

In this paper we extend the last result to the case of any $\BV_{2}$--function 
defined on a planar domain (without the additional Lipschitz assumption,
see Theorem~\ref{Th1}). Moreover, as we understand the critical set
in a wider sense than in~\cite{PZ}, our result is also an improvement
in the Lipschitz case. More precisely, in~\cite{PZ} the critical set is 
defined as the set of points~$x$, where $v$ is (Frechet--)differentiable 
with total (Frechet--)differential $v'(x)=0$. But it
is known \cite{Dor} (see also Lemma~\ref{lem3.1.1} below) that in
general a function $v\in \BV_{2,\loc}(\Omega)$ admits a continuous representative
which is differentiable outside an at most $\H^1$-$\sigma$-finite
(rectifiable) set, and that has ``half-space differentials''
$\H^1$-almost everywhere. We include in the critical set $Z_v$ the
points $x\in\Omega$ such that one of the ``half-space differentials''
is zero at $x$.

Our main result, contained in Theorem \ref{Thh3.3} and Corollary \ref{Thh3.1},
is to establish the Luzin $N$--property with respect to $\H^1$ for $\BV^2$ 
functions on plane domains.
More precisely, we show that if $v$ is $\BV^2$ on the open domain $\Omega \subset \R^2$, 
then for any $\varepsilon >0$ there exists $\delta >0$ such that for all
subsets $E\subset\Omega$ with $\H_\infty^1(E) < \delta$ we have 
$\L^1(v(E))< \varepsilon$. In particular, it follows
that $\L^1(v(E))=0$ whenever $\H^1(E)=0$.  So the image of the exceptional ``bad'' set, 
where neither the differential nor the half-space differentials are defined, has zero Lebesgue
measure. This ties nicely in with our definition of the critical set and our
version of the Morse--Sard result for $\BV^2$--functions on the plane.

Finally, using these results we prove that almost all level sets of $\BV_2$--functions
defined on open domains in the plane, are finite disjoint unions of Lipschitz arcs whose 
tangent vectors have bounded variations (Theorem~\ref{Th3.1} and Corollary~\ref{cor3.2}). 
In the $\WW^{2,1}$--case we can strengthen the conclusions and show that almost all level 
sets are finite disjoint unions of $\CC^1$--arcs whose tangent vectors are absolutely 
continuous functions (Theorem~\ref{Th2.1} and Corollary~\ref{cor2.2}).

After this work was completed we learned that \cite{ACK} have also recently established the 
Morse--Sard property for $\WW^{2,1}$ functions on the plane.

\section{Preliminaries}

Throughout this paper $\Omega$ denotes an open subset of $\R^2$.
By {\it a domain} we mean an open connected set. For a general subset $E \subset \R^2$, 
we let $\Cl{E}$ stand for its closure, and $\partial{E}$ for its boundary.

For a distribution $T$ on $\Omega$ denote by $D_iT$,
$i=1,2$, the distributional partial derivatives of $T$, and write $DT = (D_{1}T,D_{2}T)$. 
For $\R$-valued and $\R^2$--valued Radon measures $\mu$ we denote by $\|\mu\|$ the total
variation measure of $\mu$. The space $\BV (\Omega)$ is as usual defined as consisting
of those functions $f\in \LL^1(\Omega)$ whose distributional
partial derivatives $D_if$ are Radon measures with
$\|D_if\|(\Omega)<\infty$ (for detailed definitions
see~\cite{EG}). As a consequence of Radon--Nikodym's theorem we have for any $f\in
\BV(\Omega)$ the polar decomposition of the distributional
derivative $Df(E) = \int_{E} \! \nu \, \dd \| Df \|$, where
$\nu \colon \Omega \to {\mathbb S}^1$ is a Borel vector field
valued in the unit sphere ${\mathbb S}^1 \subset \R^2$, and $\| Df \|$
is the total variation measure of $Df$.

A central role is played by $\BV^2(\Omega)$ defined as the space of functions $v\in
\LL^1(\Omega)$ such that $D_iv\in \BV(\Omega)$,
$i=1,2$. It is known (see~\cite{M}) that each function $v\in
\BV_2(\Omega)$ has a continuous representative, and subsequently we shall always select this
representative when discussing $\BV^2$--functions. For $v\in
\BV_2(\Omega)$ denote by $\nabla v$ the gradient mapping $\nabla
v = (D_{1}v,D_{2}v)\colon \Omega\to\R^2$, well--defined as a $\BV (\Omega , \R^2 )$
mapping. Denote also
$$\|v\|_{\BV_2(\Omega)}=
\|v\|_{\LL^1(\Omega)}+\|\nabla v\|_{\LL^1(\Omega)}+\|D^2v\|(\Omega),$$
$$
\WW^{1,1}(\Omega)=\{f\in \LL^1(\Omega): \, D_if\in \LL^1(\Omega),
i=1,2 \},
$$
$$
\WW^{2,1}(\Omega)=\{v\in \LL^1(\Omega): \, D_if\in \WW^{1,1}(\Omega),
i=1,2 \}.
$$
We write $\|v\|_{\BV}$ instead of $\|v\|_{\BV(\R^2)}$.

For a Lebesgue measurable set $F\subset \R^2$ and a point
$x\in\R^2$ we use the following notation:
$$
\bar D(F,x)=\limsup\limits_{r\to0+}\frac{\mathcal L^2(F\cap B(x,r))}{\mathcal
L^2 (B(x,r))},\quad \underline
D(F,x)=\liminf\limits_{r\to0+}\frac{\mathcal L^2(F\cap
B(x,r))}{\mathcal L^2 (B(x,r))},
$$
$$
\Int_MF=\{x\,:\,\underline
D(F,x)=1\},\quad \Cl_MF=\{x\,:\,\bar D(F,x)>0\},
$$ 
$$
\partial^MF= \Cl_M F\setminus\Int_MF.
$$
Here $\mathcal L^2$ is the Lebesgue measure on $\R^2$.
Denote by $\H^1$, $\H^{1}_{\infty}$ the 1-dimensional Hausdorff measure, Hausdorff content, respectively:
for any $F \subset \R^2$,
$\H^1(F)=\lim\limits_{\alpha\to0+}\H^1_\alpha(F),$ where
$$
\H^1_\alpha(F)=\inf\bigl\{
\sum_{i=1}^\infty\diam F_i\ :\ \diam F_i\le\alpha,\ \ F
\subset\bigcup\limits_{i=1}^\infty F_i\bigr\}.
$$
Recall that for any function $f\in \BV (U)$, where $U$ is an open
set in $\R^2$, the coarea formula
$$
\|Df\|(U)=\int\limits_{-\infty}^{+\infty}\H^{1}\bigl(U\cap\partial^M\{f\le\lambda\}\bigr)\,
\dd \lambda
$$
holds (see~\cite{EG}).

\section{On images of sets of small capacities under $\BV_2$ functions on the plane.}

The main result of this section is the following Luzin $N$--property for 
$\BV^2$--functions:

\begin{ttt}
\label{Thh3.3} {\sl Let $v\in \BV_{2}(\R^2)$. Then for any
$\varepsilon>0$ there exists $\delta>0$ such that for any set $E\subset\R^2$ \ if \
$\H^1_\infty(E)<\delta$ then
$\H^1(v(E))<\varepsilon$. }
\end{ttt}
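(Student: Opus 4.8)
The plan is to reduce the statement to a local, quantitative estimate and then exploit the $\BV^2$ structure through a suitable decomposition of the plane at an appropriate dyadic scale. First I would fix $v\in\BV_2(\R^2)$ and recall that $\nabla v\in\BV(\R^2,\R^2)$; by the standard embedding $\BV(\R^2)\hookrightarrow\LL^2(\R^2)$ together with the fact that $v$ has a continuous representative, $\nabla v$ is a genuine (continuous-away-from-a-small-set) map whose ``oscillation control'' is governed by the measure $\|D^2v\|$. The key geometric input is that for a cube (or ball) $Q$ of side $r$, the oscillation of $v$ on $Q$ is controlled by $r$ times the average of $|\nabla v|$ on a slightly larger cube, while the oscillation of $\nabla v$ on $Q$ is controlled by $\|D^2v\|$ on a slightly larger cube plus a lower-order term. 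Thus on cubes where $\|D^2v\|$ is small and $|\nabla v|$ is comparable to its average, $v$ behaves almost affinely, and the image $v(Q)$ is an interval of length $\lesssim r\cdot(\text{average of }|\nabla v|)$.

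The core estimate I would aim for is the following: given $\varepsilon>0$, one can cover $E$ by countably many cubes $Q_i$ with $\sum_i\diam Q_i<\delta$, and then it suffices to bound $\sum_i\diam v(Q_i)$. On each $Q_i$ one has the crude bound $\diam v(Q_i)\le C\,\diam Q_i\cdot\bigl(\dashint_{Q_i^*}|\nabla v|\bigr)+C\,\|D^2v\|(Q_i^*)$, where $Q_i^*$ is a fixed dilate of $Q_i$ and $\dashint$ is the average. Summing the second term is harmless once the $Q_i^*$ have bounded overlap: it contributes at most $C\,\|D^2v\|$ of the $\delta$-neighbourhood of $E$, which tends to $0$ with $\delta$ by absolute continuity of the measure $\|D^2v\|$ (more precisely one uses that $\|D^2v\|$ of a set of small $\H^1$-content, hence small area after thickening, is small — here one must be a little careful since $\|D^2v\|$ need not be absolutely continuous with respect to $\L^2$, so instead I would argue that the singular part of $\|D^2v\|$ is concentrated on an $\H^1$-$\sigma$-finite set and handle it by a separate, direct covering argument). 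For the first term, the difficulty is that $\int_{\R^2}|\nabla v|$ may be spread out, so a naive sum over all of $\R^2$ does not see the smallness of $\delta$; the resolution is to split $E$ according to the local density of the measure $|\nabla v|\,d\L^2$: on the ``good'' part where $|\nabla v|$ is not too large at scale $r$, the Hausdorff-content bound gives $\sum r_i\cdot(\text{average})\lesssim\|\nabla v\|_{L^1(\text{thickening})}$, again small; on the ``bad'' part, a weak-type (maximal-function) estimate shows the bad set itself has small $\H^1$-content, and one iterates the argument there at a finer scale.

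The genuinely hard part, and where the main technical effort goes, is making precise the claim that $v$ is nearly affine on cubes that are ``doubly good'' (small second-variation density and controlled gradient density), and organising the two-parameter selection so that the contributions are summable. I would carry this out via a stopping-time / Calder\'on--Zygmund-type decomposition: starting from a cover of $E$ of small content, repeatedly subdivide any cube on which either $\|D^2v\|$-density or $|\nabla v|$-density exceeds a threshold $\lambda$, using the coarea formula $\|Df\|(U)=\int\H^1(U\cap\partial^M\{f\le\lambda\})\,d\lambda$ applied to $f=\partial_j v$ to control the level sets of $\nabla v$ and hence the oscillation of $v$ across the generation boundaries. On the terminal cubes $v(Q)$ is an interval whose length is $\lesssim\lambda\,\diam Q$, and the total of $\diam Q$ over terminal cubes is bounded by the content of (a neighbourhood of) $E$; the cubes removed at each generation contribute through the coarea integral, which is finite and, after the thickening, small when $\delta$ is small. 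Letting $\lambda$ be chosen in terms of $\varepsilon$ (and then $\delta$ in terms of both) closes the estimate; the final statement $\H^1(v(E))=0$ when $\H^1(E)=0$ is the obvious corollary obtained by sending $\delta\to0$.
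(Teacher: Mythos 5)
Your reduction to the local estimate $\H^1(v(Q))\lesssim \|D^2v\|(Q^*)+\ell(Q)^{-1}\int_{Q^*}|\nabla v|$ is exactly the paper's Lemma~\ref{lb1}, and your treatment of the second--derivative term is in the right spirit (the paper disposes of it in one stroke via the coarea formula, Lemma~\ref{lem3.2}, which gives $\|D^2v\|(E)<\varepsilon$ whenever $\H^1_\infty(E)<\delta$, singular part included). The genuine gap is in your handling of the gradient term $\sum_i \ell(Q_i)^{-1}\int_{Q_i^*}|\nabla v|$. Your proposed splitting by the local density of $|\nabla v|\,\dd\L^2$ does not close: if the threshold is imposed on the \emph{average} $\dashint_{Q^*}|\nabla v|\le\lambda$, the good part is indeed $\le\lambda\sum\ell(Q_i)\le\lambda\delta$, but the bad set $\{M|\nabla v|>\lambda\}$ is then controlled only in Lebesgue measure by the weak--type $(1,1)$ inequality, and small $\L^2$--measure gives no bound whatsoever on $\H^1_\infty$ (a thin tube has tiny area and large content), so the covering of $E$ restricted to the bad set does not improve and the ``iteration at a finer scale'' has nothing to iterate on. If instead the threshold is imposed on the correctly scaled quantity $\ell(Q)^{-1}\int_{Q^*}|\nabla v|$, then the capacitary weak--type estimate (the paper's Lemma~\ref{lem3.4}) does bound the content of the bad set, but only by the fixed constant $C\|\nabla v\|_{\LL^1}/\lambda$, which is not small and does not contract under iteration; meanwhile the good part of the sum is now only bounded by $\lambda$ times the \emph{number} of cubes, which is uncontrolled. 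Either way the argument stalls.

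What the paper does instead, and what is missing from your proposal, is twofold. First, a truncation $\nabla v=f_0+f_1$ with $\|f_0\|_{\LL^\infty}\le K$ and $\|f_1\|_{\BV}<\varepsilon_0$ (Corollary~\ref{lemc3}): the bounded part contributes $\sum_\beta\ell(J_\beta)^{-1}\int_{J_\beta}|f_0|\le K\sum_\beta\ell(J_\beta)$, which is where the smallness of $\delta$ is used. Second, for the part $f_1$ the smallness comes from $\|f_1\|_{\BV}$, not from $\delta$: one passes from the original cover to the \emph{maximal} dyadic intervals $J_\beta$ satisfying $\sum_{I_\alpha\subset J}\ell(I_\alpha)\ge\ell(J)$ (a stopping--time selection), which forces the measure $\mu=\sum_\beta\ell(J_\beta)^{-1}1_{J_\beta}\,\L^2$ to satisfy the Carleson--type packing condition $\mu(I)\le 48\,\ell(I)$ for every interval $I$; then Bourgain's Hardy--type inequality $\int|f|\,\dd\mu\le C\|f\|_{\BV}$ for such measures (Lemmas~\ref{lemkb1} and~\ref{lb2}, proved by writing a $\BV$ function as a convex combination of indicators of smooth domains normalized by boundary length) gives $\sum_\beta\ell(J_\beta)^{-1}\int_{J_\beta}|f_1|=\int|f_1|\,\dd\mu\le C\varepsilon_0$. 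Without the truncation and this duality between $\BV$ and linear--growth measures, I do not see how your scheme can control the gradient term.
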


\begin{cor}
\label{Thh3.1} {\sl If $v\in \BV_2(\R^2)$, $E\subset\R^2$, and
$\H^1(E)=0$, then $\H^1 (v(E))=0$. }
\end{cor}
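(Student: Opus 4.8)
The plan is to prove the quantitative statement (Theorem~\ref{Thh3.3}) and deduce the qualitative Corollary~\ref{Thh3.1} as an immediate consequence. First I would reduce matters to a local statement: since $v \in \BV_2(\R^2)$, we may split $E$ into countably many pieces $E \cap Q_j$ over a locally finite covering of $\R^2$ by unit squares, and it suffices to control $\H^1(v(E \cap Q))$ in terms of $\H^1_\infty(E \cap Q)$ together with the local $\BV_2$-norm of $v$ on a slightly larger square; summing (and using that $\sum_j \|v\|_{\BV_2(Q_j')}$ is finite and $\H^1_\infty$ is subadditive) gives the global claim after choosing $\delta$ small in terms of $\varepsilon$ and the tail of the norm sum. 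Within a fixed cube, the natural strategy is a Whitney-type decomposition of a neighbourhood of $E$ into dyadic subcubes whose sizes are comparable to their distance to $E$; the total of the side lengths over the cubes meeting any fixed generation is controlled by $\H^1_\infty(E)$.

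The heart of the argument is a pointwise oscillation estimate: on a dyadic cube $Q$ of side $r$, one wants $\osc_Q v \lesssim r \cdot \fint_{Q^*} |\nabla v| + \|D^2 v\|(Q^*)$, where $Q^*$ is a fixed dilate of $Q$. This is the standard Poincaré-type inequality for $\BV_2$ (or $\WW^{2,1}$) functions: $v$ minus its average has gradient controlled in $\LL^1$ by $\|D^2v\|$ via the $\BV$ Poincaré inequality applied to $\nabla v$, and then $\osc v$ is controlled by $\|\nabla v\|_{\LL^1}$ on the enlarged cube by Sobolev embedding $\WW^{1,1}(\R^2)\hookrightarrow \LL^2$ combined with a trace/averaging argument — since we are in the plane, $\LL^2$ scaling of $\nabla v$ on a cube of side $r$ against $r^{-1}$ gives exactly an $r^0$-homogeneous bound, i.e. $\osc_Q v \lesssim \|\nabla v\|_{\LL^2(Q^*)} + \|D^2 v\|(Q^*) =: \mu(Q^*)^{1/2}\cdot(\text{stuff}) + \nu(Q^*)$ for suitable finite measures $\mu,\nu$ with no atoms from $\|D^2v\|$'s absolutely continuous part but possibly atoms in general — this is fine because we only need finiteness and absolute continuity with respect to small sets. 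Covering $v(E\cap Q)$ by the images $v(Q_i)$ of the Whitney cubes, each of diameter at most $\osc_{Q_i} v$, we get
\[
\H^1_\infty(v(E \cap Q)) \;\lesssim\; \sum_i \osc_{Q_i} v \;\lesssim\; \sum_i \Bigl(\mu(Q_i^*)^{1/2}\ell(Q_i)^0 + \nu(Q_i^*)\Bigr),
\]
and the bounded-overlap property of the dilated Whitney cubes plus Cauchy–Schwarz (pairing $\sum \ell(Q_i)$ against $\sum \mu(Q_i^*)/\ell(Q_i)$... ) turns the first sum into something like $\bigl(\sum_i \ell(Q_i)\bigr)^{1/2}\cdot \mu(E^*)^{1/2} \lesssim \H^1_\infty(E)^{1/2}\,\|\nabla v\|_{\LL^2}$, while the second is $\lesssim \|D^2 v\|(E^*)$ where $E^*$ is a small neighbourhood of $E$. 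The absolute continuity of the finite measures $\|\nabla v\|^2$ (equivalently $|\nabla v|^2\,\L^2$, which is a finite measure since $\nabla v \in \LL^2$ on compacta — here one needs the planar Sobolev embedding again) and $\|D^2 v\|$ with respect to $\H^1_\infty$-small sets is what produces $\delta$: given $\varepsilon$, choose the Whitney decomposition fine enough and $\delta$ small enough that both $\|\nabla v\|_{\LL^2(E^*)}$ and $\|D^2v\|(E^*)$ are small whenever $\H^1_\infty(E) < \delta$.

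The main obstacle I anticipate is precisely this last absolute-continuity/smallness step for the term coming from $\|D^2 v\|$: since $\|D^2v\|$ is merely a finite Radon measure it can have a singular part (indeed concentrated on an $\H^1$-rectifiable set), so one cannot claim $\|D^2v\|(E^*)\to 0$ just from $\H^1_\infty(E)\to 0$ for arbitrary $E$ — a set of small Hausdorff content can still carry a lot of a measure supported on a line. The resolution must be that the ``small'' term to be estimated is not $\|D^2v\|(E^*)$ for a neighbourhood $E^*$ but rather a sum $\sum_i \nu(Q_i^*)$ in which each Whitney cube is counted once and dilated cubes have bounded overlap, so it equals (up to the overlap constant) $\|D^2v\|$ of a set that shrinks down to $E$ as the decomposition is refined; then $\bigcap$ of these neighbourhoods is $E$ (or its closure intersected with the relevant region), which is $\H^1_\infty$-small, and one invokes a genuine lemma: for a finite Radon measure $\nu$ on the plane whose restriction to $\H^1$-null sets vanishes, $\nu(U_k)\to 0$ along a decreasing sequence of open sets $U_k$ with $\H^1_\infty(\bigcap U_k)$ small — this is false in general, so in fact one needs instead to exploit that $\sum_i \ell(Q_i)^0 \nu(Q_i^*)$ is the wrong bound and the correct bound carries a gain of $\ell(Q_i)$ from the oscillation estimate being genuinely $o(1)$-homogeneous after the Sobolev step, i.e. $\osc_{Q_i}v \lesssim \|\nabla v\|_{\LL^2(Q_i^*)}$ with the $\|D^2v\|$ dependence absorbed; getting this clean scaling (no leftover additive $\|D^2v\|(Q_i^*)$ term, only an $\LL^2$ gradient term that is itself controlled by $\|D^2v\|$ via Poincaré and hence is equi-integrable) is the technical crux, and it is exactly where the restriction to the plane (dimension $=$ order of derivatives) is used. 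I would therefore organize the proof around a single clean Poincaré–Sobolev oscillation lemma for $\BV_2$ on planar cubes with sharp scaling, prove equi-integrability of $|\nabla v|^2$, and then the covering estimate with Cauchy–Schwarz and bounded overlap delivers Theorem~\ref{Thh3.3}; Corollary~\ref{Thh3.1} follows by letting $\varepsilon \to 0$ since $\H^1(E)=0$ forces $\H^1_\infty(E)=0 < \delta$ for every $\delta$.
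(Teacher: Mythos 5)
Your reduction of the corollary to the quantitative Theorem~\ref{Thh3.3} is exactly right, and the overall shape of your argument (cube decomposition, oscillation estimate on each cube, summation) matches the paper's. But the two analytic pillars you lean on both fail, and the mechanism the paper actually uses is one your sketch does not contain. First, the ``clean'' oscillation bound $\mathop{\rm osc}_Q v\lesssim\|\nabla v\|_{\LL^2(Q^*)}$ with the $\|D^2v\|(Q^*)$ term ``absorbed'' is false: in the plane $\WW^{1,2}\not\hookrightarrow\LL^\infty$, and the correct scale-invariant estimate (Lemma~\ref{lb1}) is $\H^1(v(Q))\le C\{\|D^2v\|(Q)+\ell(Q)^{-1}\int_Q|\nabla v|\}$, where neither term can be dropped. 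Second, your Cauchy--Schwarz summation of the gradient terms requires $\int_Q|\nabla v|^2\lesssim\ell(Q)$ uniformly in $Q$, i.e.\ that $|\nabla v|^2\L^2$ be a trace measure; this does not follow from a $\BV_2$ bound (the cone $v(x)=\max(0,1-|x|/r)$ has $\|v\|_{\BV_2}=O(1)$ uniformly in $r$, yet $\int_{B(0,r)}|\nabla v|^2=\pi$ for every $r$, and such cones can be superposed at all scales in a single function). Relatedly, a Whitney decomposition of the complement of $E$ does not satisfy the Carleson packing condition $\sum_{Q_i\subset Q}\ell(Q_i)\lesssim\ell(Q)$: already for $E$ a segment, each generation of Whitney cubes contributes a fixed amount to the sum, which therefore diverges. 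So the measure $\sum_i\ell(Q_i)^{-1}1_{Q_i}\L^2$ you would need does not have linear growth.

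The paper supplies precisely the two ingredients that are missing. (1) Instead of Whitney cubes it uses a stopping-time construction: cover $E$ by dyadic intervals $I_\alpha$ with $\sum_\alpha\ell(I_\alpha)$ small, and pass to the maximal dyadic intervals $J_\beta$ satisfying $\sum_{I_\alpha\subset J}\ell(I_\alpha)\ge\ell(J)$. These are disjoint, still have small total side length, and satisfy the packing bound~(\ref{cb11}), which is exactly what makes $\mu=\sum_\beta\ell(J_\beta)^{-1}1_{J_\beta}\L^2$ a measure of linear growth (property $(*)$). (2) The hard term $\sum_\beta\ell(J_\beta)^{-1}\int_{J_\beta}|\nabla v|=\int|\nabla v|\,d\mu$ is then controlled by splitting $\nabla v=f_0+f_1$ with $f_0$ bounded and $\|f_1\|_{\BV}<\varepsilon_0$, the bounded part costing only $K\sum_\beta\ell(J_\beta)$, and the small part being handled by the trace inequality $\int|f_1|\,d\mu\le C\|f_1\|_{\BV}$ for linear-growth measures (Lemma~\ref{lb2}), itself proved by decomposing a $\BV$ function into perimeter-normalized indicators of smooth domains via the coarea formula (Lemma~\ref{lemkb1}). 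This duality step is the heart of the theorem and has no counterpart in your proposal. Finally, the obstacle you worry about for the second-derivative term is not actually there: for a fixed finite measure vanishing on $\H^1$-null sets (as $\|D^2v\|$ does, by the coarea formula), one does have $\|D^2v\|(E)\to0$ uniformly as $\H^1_\infty(E)\to0$ --- if $E_k$ had content $<2^{-k}$ but $\|D^2v\|(E_k)\ge\varepsilon_0$, then $\limsup_kE_k$ would be $\H^1$-null yet carry mass at least $\varepsilon_0$. This is Lemma~\ref{lem3.2}, and it disposes of that term directly; the genuine difficulty sits entirely in the first-order term.
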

Fix a function $v\in \BV_{2}(\R^2)$. To prove the above results we
need some preliminary lemmas.

\begin{lem}
\label{lem3.2} {\sl For any $\varepsilon>0$ there exists
$\delta>0$ such that for any set $E\subset\R^2$ if $\H^1_\infty(E)<\delta$ then
$\|D^2v\|(E)<\varepsilon$. }
\end{lem}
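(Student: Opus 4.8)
The plan is to argue by contradiction, exploiting the fact that $\|D^2v\|$ is a finite Radon measure on $\R^2$ together with an absolute continuity estimate that compares $\|D^2v\|$ against the Hausdorff content $\H^1_\infty$ on small sets. Suppose the conclusion fails: then there is an $\varepsilon_0>0$ and a sequence of sets $E_n\subset\R^2$ with $\H^1_\infty(E_n)<1/n$ but $\|D^2v\|(E_n)\ge\varepsilon_0$ for every $n$. First I would reduce to a clean covering statement: since $\H^1_\infty(E_n)<1/n$, for each $n$ I can cover $E_n$ by countably many sets $\{F_{n,i}\}_i$ with $\sum_i\diam F_{n,i}<1/n$; replacing each $F_{n,i}$ by the closed ball $B_{n,i}$ of radius $\diam F_{n,i}$ centered at any point of $F_{n,i}$, I get an open cover (after a harmless slight enlargement) $U_n=\bigcup_i B_{n,i}\supset E_n$ with $\sum_i r(B_{n,i})\to 0$, hence in particular $\L^2(U_n)\le C\sum_i r(B_{n,i})^2\to 0$, and $\|D^2v\|(U_n)\ge\|D^2v\|(E_n)\ge\varepsilon_0$.

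The heart of the matter is the estimate $\|D^2v\|(B)\le C\,\omega(r)$ for a ball $B=B(x,r)$, where $\omega(r)\to 0$ as $r\to 0$ uniformly, coming from the fact that a finite measure has no atoms of positive mass concentrated on arbitrarily small balls — but this naive bound is useless because summing over $i$ does not close. The right tool is instead the observation that a finite positive measure $\mu=\|D^2v\|$ on $\R^2$ is \emph{absolutely continuous with respect to $\H^1_\infty$ in the weak sense stated in the lemma} precisely when it carries no mass on any $\H^1$-null set; and more quantitatively, for any finite $\mu$ one has $\lim_{\delta\to0}\sup\{\mu(E):\H^1_\infty(E)<\delta\}=\mu(S)$, where $S$ is the (at most $\sigma$-finite for $\H^1$?) set of "$\H^1_\infty$-concentration" of $\mu$. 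So the real content I must extract from $\BV^2$ is: $\|D^2v\|$ assigns zero mass to every set $E$ with $\H^1(E)=0$. Equivalently, $\|D^2v\|$ has no singular part concentrated on an $\H^1$-null set. This is where I expect the main obstacle to lie, and I would establish it as follows: for $v\in\BV^2(\R^2)$ we have $\nabla v\in\BV(\R^2,\R^2)$, and for a $\BV$ vector field $w$ the measure $\|Dw\|$ decomposes (Federer--Vol'pert / De Giorgi structure theorem, \cite{EG}) as $\|Dw\|=\|D^aw\|+\|D^jw\|+\|D^cw\|$, where the absolutely continuous part vanishes on $\L^2$-null sets (hence on $\H^1$-null sets, which are $\L^2$-null), the jump part is concentrated on the rectifiable jump set $J_w$ and is \emph{absolutely continuous with respect to $\H^1\llcorner J_w$}, and the Cantor part $\|D^cw\|$ vanishes on sets that are $\sigma$-finite with respect to $\H^1$ — in particular on $\H^1$-null sets. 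Thus each of the three pieces assigns zero mass to any $\H^1$-null set, so $\|D^2v\|=\|D(\nabla v)\|$ does too.

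Granting that, the contradiction is produced by a standard extraction: from the cover $U_n$ with $\sum_i r(B_{n,i})<1/n$, pass to $G=\limsup_n U_n=\bigcap_N\bigcup_{n\ge N}U_n$. On the one hand $\H^1_\infty\bigl(\bigcup_{n\ge N}U_n\bigr)\le\sum_{n\ge N}\sum_i\diam B_{n,i}\le 2\sum_{n\ge N}1/n$ — wait, this diverges, so I should instead thin the sequence first, keeping a subsequence with $\sum_n\H^1_\infty(U_n)<\infty$ (possible since $\H^1_\infty(U_n)<1/n$ can be replaced by $<2^{-n}$ by passing to a subsequence). Then $\H^1(G)=0$ by Borel--Cantelli for Hausdorff content, so $\|D^2v\|(G)=0$ by the absolute continuity just proved. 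On the other hand, $\|D^2v\|$ being a finite measure, $\|D^2v\|\bigl(\bigcup_{n\ge N}U_n\bigr)\to\|D^2v\|(G)=0$, while each term is $\ge\varepsilon_0$ — contradiction. I would close by noting that exactly the same Borel--Cantelli/absolute-continuity mechanism gives Corollary-type statements, and that the only genuinely $\BV^2$-specific input used was the structure theorem for $\BV$ derivatives guaranteeing $\|D^2v\|\ll\H^1$ in the sense of vanishing on $\H^1$-null sets.
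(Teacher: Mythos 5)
Your argument is correct, and its overall skeleton is the one the paper's one-line proof (``a consequence of the Coarea formula'') is silently relying on: since $\|D^2v\|$ is a \emph{finite} Radon measure, the uniform $\varepsilon$--$\delta$ statement is equivalent, via exactly your Borel--Cantelli/continuity-from-above reduction (after enlarging the $E_n$ to open sets $U_n$ with $\sum_n\H^1_\infty(U_n)<\infty$ and using $\H^1_\infty(A)=0\Leftrightarrow\H^1(A)=0$), to the single qualitative fact that $\|D^2v\|$ vanishes on $\H^1$-null sets. Where you differ from the paper is in how you obtain that fact: you invoke the De Giorgi--Federer--Vol'pert decomposition $Dw=D^aw+D^jw+D^cw$ for $w=\nabla v\in\BV(\R^2,\R^2)$ and check each piece separately, whereas the paper's intended route is the coarea formula quoted in its Preliminaries, $\|Df\|(E)=\int_{\R}\H^1\bigl(E\cap\partial^M\{f\le\lambda\}\bigr)\,\dd\lambda$, which gives $\|Df\|(E)=0$ immediately when $\H^1(E)=0$ because every integrand vanishes. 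Both inputs are standard and correct (your statement that the Cantor part vanishes on $\H^1$-$\sigma$-finite sets, hence on $\H^1$-null sets, is a genuine theorem); the coarea route is a one-line deduction from a formula already stated in the paper, while your route imports heavier structure theory but yields the same conclusion and arguably makes the mechanism more transparent. The only blemish is the exploratory middle paragraph about a ``set of $\H^1_\infty$-concentration'' of $\mu$, which is vague and not needed --- your final argument bypasses it entirely --- and the unused observation that $\L^2(U_n)\to0$; neither affects the validity of the proof.
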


\begin{proof}
This is a consequence of the Coarea formula.
\end{proof}

\begin{lem}
\label{lemc1} {\sl For each $f\in \BV(\R^2)$ and for any
$\varepsilon_0>0$ there exists a pair of functions $f_0,f_1\in
\BV(\R^2)$ such that
\begin{eqnarray}
\label{c1} f = f_0 +f_1 ,\\
\label{c2} \|f_0\|_{\LL^{\infty}}\le K ,\\
\label{c3} \|f_1\|_{\BV}<\varepsilon_0,
\end{eqnarray}
where $K=K(\varepsilon_0,f)$. }
\end{lem}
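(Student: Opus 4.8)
The plan is to obtain the decomposition by truncating $f$ at a large level $K$ and checking that the "tail" part has small $\BV$-norm when $K$ is chosen large enough. Concretely, for $K>0$ define the truncation $f_0 = \max\{-K,\min\{K,f\}\}$, so that $f_0$ coincides with $f$ on $\{|f|\le K\}$ and is constant (equal to $\pm K$) where $|f|>K$; set $f_1 = f-f_0$, which is supported on the superlevel set $\{|f|>K\}$ and equals $f\mp K$ there. Then \eqref{c1} holds by construction, and \eqref{c2} holds with this $K$ since $\|f_0\|_{\LL^\infty}\le K$. It remains to verify \eqref{c3}: that $\|f_1\|_{\BV}$ can be made smaller than $\varepsilon_0$ by taking $K=K(\varepsilon_0,f)$ large.

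For the $\LL^1$ part of the norm, $\|f_1\|_{\LL^1}=\int_{\{|f|>K\}}(|f|-K)\,\dd\L^2\le\int_{\{|f|>K\}}|f|\,\dd\L^2$, which tends to $0$ as $K\to\infty$ by dominated convergence (or absolute continuity of the integral), since $f\in\LL^1(\R^2)$. For the derivative part, I would use the standard fact that truncation does not increase total variation and, more precisely, that $\|Df_1\|(\R^2)=\|Df\|(\{|f|>K\})$ up to the usual care on level sets: indeed $f_1 = (f-K)^+ - (f+K)^-$, and by the coarea formula for $\BV$ functions stated in the Preliminaries,
\[
\|Df\|(\R^2)=\int_{-\infty}^{+\infty}\H^1\bigl(\partial^M\{f\le\lambda\}\bigr)\,\dd\lambda,
\]
so $\|D(f-K)^+\|(\R^2)=\int_{K}^{+\infty}\H^1\bigl(\partial^M\{f\le\lambda\}\bigr)\,\dd\lambda\to 0$ as $K\to\infty$ because the full integral is finite; the analogous statement holds for $(f+K)^-$ as $K\to\infty$. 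Hence $\|Df_1\|(\R^2)\to 0$, and combining with the $\LL^1$ estimate we get $\|f_1\|_{\BV}\to 0$ as $K\to\infty$. Choosing $K$ large enough that this quantity is below $\varepsilon_0$ gives \eqref{c3}.

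The only genuinely delicate point is justifying $\|D(f-K)^+\|(\R^2)=\int_K^\infty\H^1(\partial^M\{f\le\lambda\})\,\dd\lambda$ and, more basically, that $f_1\in\BV(\R^2)$ at all; this is where I expect the main (though still routine) work to lie, and it is handled by the chain rule for $\BV$ functions composed with the Lipschitz map $t\mapsto(t-K)^+$ together with the coarea formula quoted above. Everything else — additivity \eqref{c1}, the $\LL^\infty$ bound \eqref{c2}, and the convergence of the $\LL^1$ norm — is immediate. One should also note $f_0\in\BV(\R^2)$: this follows from $f_0=f-f_1$ with both $f,f_1\in\BV(\R^2)$, or directly since truncation is $1$-Lipschitz.
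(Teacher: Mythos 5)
Your proposal is correct and follows essentially the same route as the paper: truncate $f$ at level $\pm K$, control the $\LL^1$ part of $f_1$ by integrability of $f$, and control $\|Df_1\|(\R^2)$ via the coarea formula, which identifies it with $\int_{|\lambda|>K}\H^1\bigl(\partial^M\{f\le\lambda\}\bigr)\,\dd\lambda\to0$ as $K\to\infty$. The only cosmetic difference is that you split $f_1=(f-K)^+-(f+K)^-$ and apply coarea to each piece, whereas the paper applies it to $f_1$ directly.
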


\begin{proof}
The proof is similar to the proof of Theorem 3 in
\cite[\S5.9]{EG}.

Fix $K>0$ and denote
$$
f_0(x)=\left\{
\begin{array}{lcr}
f(x), & |f(x)|\le K; \\
K , & f(x)> K, \\
-K  , & f(x)<-K, \\
\end{array} \right.
$$
$$
f_1(x)=f(x)-f_0(x).
$$
Obviously $\|f_1\|_{\LL^{1}}<\frac12\varepsilon_0$ for
sufficiently large $K$. By construction we have inclusions
$f_0,f_1\in \BV(\R^2)$ (see, for example, Theorem 4(iii) in
\cite[\S4.2.2]{EG} for the Sobolev case). Then by the coarea formula
$$
\|Df_1\|(\R^2)=\int\limits_{|\lambda|>K}\H^{1}\bigl(\partial^M\{f\le\lambda\}\bigr)\,
\dd\lambda.
$$
Consequently $\|f_1\|_{\BV}<\frac12\varepsilon_0$ for
sufficiently large $K$. 
\end{proof}

\begin{cor}
\label{lemc3} {\sl For any $\varepsilon_0>0$ there exists a pair of
functions $f_0,f_1\in \BV(\R^2,\R^2)$ such that
\begin{eqnarray}
\label{c4} \forall x\in\R^2\quad\nabla v(x)\equiv f_0(x)+f_1(x);\\
\label{c5} \|f_0\|_{\LL^{\infty}}\le K;\\
\label{c6} \|f_1\|_{\BV}<\varepsilon_0.
\end{eqnarray}
}
\end{cor}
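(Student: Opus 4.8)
The plan is to deduce this componentwise from Lemma~\ref{lemc1}. Recall that $\nabla v = (D_1 v, D_2 v)$ with each scalar component $D_i v \in \BV(\R^2)$, so I would fix $\varepsilon_0 > 0$ and apply Lemma~\ref{lemc1} to $f = D_i v$ with the small parameter $\varepsilon_0 / 2$, separately for $i = 1, 2$. This produces splittings $D_i v = g_i + h_i$ with $g_i, h_i \in \BV(\R^2)$, $\|g_i\|_{\LL^\infty} \le K_i$ for a constant $K_i = K_i(\varepsilon_0, v)$, and $\|h_i\|_{\BV} < \tfrac12\varepsilon_0$. Then I would set $f_0 := (g_1, g_2)$ and $f_1 := (h_1, h_2)$, both of which lie in $\BV(\R^2, \R^2)$.

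It remains to check the three conclusions. For (\ref{c4}): in the construction of Lemma~\ref{lemc1} the bounded piece $g_i$ is the \emph{pointwise} truncation of $D_i v$, so the identity $D_i v(x) = g_i(x) + h_i(x)$ holds for every $x \in \R^2$ (not merely a.e.), whence $\nabla v(x) \equiv f_0(x) + f_1(x)$ pointwise. For (\ref{c5}): taking $K := (K_1^2 + K_2^2)^{1/2}$ (or $K := \max\{K_1, K_2\}$, depending on the chosen norm on $\R^2$) gives $\|f_0\|_{\LL^\infty} \le K = K(\varepsilon_0, v)$. For (\ref{c6}): by subadditivity of the total variation of the distributional derivative one has $\|f_1\|_{\BV} \le \|h_1\|_{\BV} + \|h_2\|_{\BV} < \varepsilon_0$.

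I do not expect a genuine obstacle here: the statement is a packaging corollary, and all the analytic content (the truncation argument and the coarea estimate) is already in Lemma~\ref{lemc1}. The only points requiring a little care are the bookkeeping of the constants $K_1, K_2$ and the elementary fact that the $\BV$ norm of an $\R^2$-valued map is dominated by the sum of the $\BV$ norms of its two scalar components, both of which are immediate from the definitions recalled in Section~1.
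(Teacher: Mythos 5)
Your proposal is correct and is exactly the intended argument: the paper states this as a corollary of Lemma~\ref{lemc1} with no separate proof, precisely because it follows by applying that lemma componentwise to $D_1v$ and $D_2v$ (each in $\BV(\R^2)$) with parameter $\varepsilon_0/2$ and assembling the pieces. Your bookkeeping of the constants and the pointwise validity of the truncation identity are the right (and only) points to check.
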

By  {\it interval } we mean a square with the sides
parallel to the coordinate axis.

\begin{lem}[see, for example, \cite{M}]
\label{lb1} {\sl Let  $I\subset\Omega$ be an interval of
the size $\ell(I)$. Then
\begin{equation}
\label{cb1} \H_1(v(I))\leq C\Big\{ \|D^2 v\|(I)+
\frac 1{\ell(I)}
\int_I |\nabla v|\Big\},
\end{equation}
where $C$ does not depend on $I,v$.
}
\end{lem}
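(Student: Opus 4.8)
\textbf{Proof plan for Lemma~\ref{lb1}.}
The plan is to reduce the estimate to a one–dimensional argument applied on
almost every horizontal and vertical line through the interval, together with an
averaging over the position of the line. First I would approximate $v$ by smooth
functions $v_k$ (say by mollification, keeping the same interval $I$ after a
negligible shrinking) so that $\|D^2v_k\|(I)\to\|D^2v\|(I)$ and
$\int_I|\nabla v_k|\to\int_I|\nabla v|$; since $v$ is continuous and the $v_k$
converge to $v$ uniformly on $I$, it suffices to prove (\ref{cb1}) for $v$ of
class $\CC^2$ and then pass to the limit. So assume $v\in \CC^2$ in a
neighbourhood of $\Cl I$.

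For a $\CC^2$ function the diameter of $v(I)$ is at most
$\mathrm{osc}_I v$, which I would bound by connecting two arbitrary points
$x,y\in I$ through a point $z$ such that the segments $[x,z]$ and $[z,y]$ are
axis–parallel and contained in $I$; then
$|v(x)-v(y)|\le \int_{[x,z]}|D_1 v|\,\dd\H^1+\int_{[z,y]}|D_2 v|\,\dd\H^1$, so
$\mathrm{osc}_I v$ is controlled by $\sup$ over lines $\ell$ (horizontal or
vertical) of $\int_{\ell\cap I}|\nabla v|\,\dd\H^1$. Now fix, say, a horizontal
line $\ell_t=\{x_2=t\}$ meeting $I$ in a segment of length $\ell(I)$. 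On this
segment $g(s):=|\nabla v(s,t)|$ is Lipschitz with $|g'|\le |D^2 v|$ along
$\ell_t$, so for any $s_0$ in the segment
$g(s_0)\le g(s)+\int_{\ell_t\cap I}|D^2 v|\,\dd\H^1$; averaging in $s$ over the
segment gives
$g(s_0)\le \frac1{\ell(I)}\int_{\ell_t\cap I}|\nabla v|\,\dd\H^1
+\int_{\ell_t\cap I}|D^2 v|\,\dd\H^1$, and hence
$\int_{\ell_t\cap I}|\nabla v|\,\dd\H^1\le \frac1{\ell(I)}\int_{\ell_t\cap I}|\nabla v|\,\dd\H^1\cdot\ell(I)$ — more usefully, integrating this pointwise bound once more and using Fubini I would obtain, after integrating $t$ over the projection of $I$,
a bound of the form
$\sup_t\int_{\ell_t\cap I}|\nabla v|\,\dd\H^1\le \frac{C}{\ell(I)}\int_I|\nabla v|
+C\int_I|D^2 v|$.
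Combining this with the same estimate for vertical lines and with the oscillation
bound yields $\diam v(I)\le C\big(\|D^2v\|(I)+\frac1{\ell(I)}\int_I|\nabla v|\big)$,
and since $v(I)$ is an interval in $\R$ its $\H^1$–measure equals its diameter,
which is (\ref{cb1}).

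The step I expect to be the main technical point is the passage from a pointwise
bound on a single line to an $\LL^1$ bound on the whole interval with the correct
power $1/\ell(I)$: one has to average in the line variable transverse to $\ell_t$
as well, so that a term $\sup_t(\cdots)$ is replaced by
$\frac1{\ell(I)}\int(\cdots)\,\dd t$, which is exactly what converts a line
integral of $|\nabla v|$ into the full area integral $\int_I|\nabla v|$ divided
by $\ell(I)$; keeping the constant $C$ independent of $I$ and $v$ requires doing
this averaging carefully (using that both directions give comparable bounds and
that the projections of $I$ have length $\ell(I)$). The mollification step also
needs the minor care that $\|D^2v_k\|(I)$ does not jump up in the limit, which is
handled by lower semicontinuity of the total variation together with choosing a
slightly smaller interval on which equality of limits holds, and this only costs
a harmless enlargement of $C$.
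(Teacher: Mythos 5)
Your argument is correct, and it is worth noting that the paper itself offers no proof of Lemma~\ref{lb1} --- it only cites Maz'ya's book. The proof implicit in that reference (and the one the authors actually write out for the closely analogous Lemma~\ref{lem1}) goes through a Sobolev-type embedding of the form $\sup_I|u|\le c\,(p(u)+\|D^2u\|(I))$, where $p$ is a continuous seminorm on $\WW^{2,1}(I)$ that is a norm on affine functions; applying this to $u(y)=v(y)-v(x_0)-y\cdot\dashint_I\nabla v$ gives $\mathrm{osc}_I v\le C(\|D^2v\|(I)+\ell(I)\,|\dashint_I\nabla v|)$, which is (\ref{cb1}) after rescaling. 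Your route is more elementary and self-contained: mollify, reduce to $\CC^2$, bound $\mathrm{osc}_I v=\H^1(v(I))$ by line integrals of $|\nabla v|$ along axis-parallel paths, and then control $\sup_t\int_{\ell_t\cap I}|\nabla v|\,\dd\H^1$ by averaging over the \emph{transverse} parameter, using
$\int_{\ell_t\cap I}|\nabla v|\,\dd\H^1\le\int_{\ell_{t'}\cap I}|\nabla v|\,\dd\H^1+\int_I|D^2v|$
and then averaging in $t'$ over an interval of length $\ell(I)$. That is exactly the right mechanism, and you correctly identify it as the crux; the only blemishes are the one garbled intermediate display (which is circular as written but is superseded by the correct statement that follows it) and the unnecessary detour through averaging in $s$ along the same line, which plays no role in the final bound. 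The mollification step is also fine: $\|D^2(v\ast\rho_\varepsilon)\|(I)\le\|D^2v\|(I^\varepsilon)\to\|D^2v\|(\Cl I)$ and $v\ast\rho_\varepsilon\to v$ uniformly on $I$ by continuity of the $\BV^2$ representative, so the estimate passes to the limit on closed intervals without enlarging $C$. What the embedding route buys is uniformity over more general domains and a template reused elsewhere in the paper (Lemma~\ref{lem1}); what your route buys is a completely explicit constant ($C=2$ works) and no appeal to compactness or to properties of seminorms.
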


\begin{lem}[see also~\cite{Bo}]
\label{lemkb1} {\sl Denote by $\mathcal C$ the collection of
all functions of the form
$$
\varphi=\frac 1{\H^1(\partial \Omega)} 1_\Omega,
$$
where $1_\Omega$ is the indicator function of the set $\Omega$ and
$\Omega$ is a bounded domain in $\mathbb R^2$ with a smooth
boundary $\partial \Omega$. If $f\in \BV(\R^2)$ and
\begin{equation}
\label{kb1} \|f\|_{\BV}\le1,
\end{equation}
then there exists a sequence of functions $f_i \colon\R^2\to\R$ such that
$f_i\to f$ almost everywhere, and each $f_i$ is a convex combination 
of functions from $\mathcal C\cup(-\mathcal C)$.
}
\end{lem}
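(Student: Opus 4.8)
The plan is to reduce the general $\BV$ function $f$ to a suitable ``atomic'' building block and then invoke a density/approximation argument. Concretely, I would first normalize so that $\|f\|_{\BV}\le 1$, and recall that the collection of functions of the form $\frac{1}{\H^1(\partial\Omega)}\mathbf 1_\Omega$, with $\Omega$ a bounded smooth domain, is — up to sign and convex combinations — ``rich enough'' to approximate the unit ball of $\BV$ in the $\LL^1$ topology. The natural route is via the coarea formula: for $f\in\BV(\R^2)$ one writes $f(x)=\int_{-\infty}^{\infty}\big(\mathbf 1_{\{f>\lambda\}}(x)-\mathbf 1_{\{f<\lambda\}\cap\{\lambda<0\}}(x)\big)\,\dd\lambda$ (i.e.\ the layer-cake representation, adjusted so the integral converges), and $\|Df\|(\R^2)=\int_{-\infty}^{\infty}\Per(\{f>\lambda\})\,\dd\lambda$. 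Thus $f$ is a (signed) superposition, with total mass $\le 1$ after the normalization, of indicators of sets of finite perimeter, each weighted inversely by its perimeter — exactly the shape of an element of $\mathcal C\cup(-\mathcal C)$ once the level sets are smooth.

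\medskip

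The key steps, in order, would be: \textbf{(i)} Use the coarea/layer-cake formula to exhibit $f$ as an integral average of $\pm$indicators of its superlevel sets $E_\lambda=\{f>\lambda\}$ (or sublevel sets), normalized by their perimeters, against a probability measure on $\lambda$; the hypothesis $\|f\|_{\BV}\le1$ is what makes the relevant measure have total mass at most one (modulo the trivial $\LL^1$ part, which contributes level sets of small perimeter handled separately or absorbed). \textbf{(ii)} Approximate each set of finite perimeter $E_\lambda$ from inside/outside by bounded domains with smooth boundary $\Omega_{\lambda,j}$ so that $\mathbf 1_{\Omega_{\lambda,j}}\to\mathbf 1_{E_\lambda}$ in $\LL^1$ and $\Per(\Omega_{\lambda,j})\to\Per(E_\lambda)$ (standard mollification + Sard's theorem to pick smooth regular values of the mollified function, or the classical approximation theorem for sets of finite perimeter). \textbf{(iii)} Discretize the $\lambda$-integral into a finite Riemann-type sum, turning the integral superposition into a genuine finite convex combination of functions from $\mathcal C\cup(-\mathcal C)$; choosing the mesh finer and finer and diagonalizing produces the desired sequence $f_i$. \textbf{(iv)} Upgrade the resulting $\LL^1$-convergence $f_i\to f$ to a.e.\ convergence by passing to a subsequence, which is all that is claimed.

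\medskip

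I expect the main obstacle to be step \textbf{(ii)} together with the bookkeeping in \textbf{(iii)}: one must simultaneously control the $\LL^1$ error \emph{and} keep the weights $\frac{1}{\H^1(\partial\Omega_{\lambda,j})}$ from blowing up, i.e.\ ensure $\Per(\Omega_{\lambda,j})$ stays bounded \emph{below} away from $0$ (or, where $E_\lambda$ has tiny or vanishing perimeter, argue that such $\lambda$ contribute negligibly in $\LL^1$ and can be dropped). The perimeters must also converge, not merely stay bounded, so that the normalization constants match up and the convex-combination coefficients sum to one in the limit; this is where one leans on the strict (not just weak-$*$) approximation of sets of finite perimeter by smooth sets. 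The a.e.\ convergence in \textbf{(iv)} is then automatic. Everything else — the layer-cake identity, the coarea formula, diagonal subsequence extraction — is routine and can be cited from \cite{EG}, the argument being modeled, as the paper notes, on the corresponding construction in \cite{Bo}.
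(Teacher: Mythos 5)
Your proposal follows essentially the same route as the paper: both rest on the layer-cake/coarea decomposition of $f$ into perimeter-normalized indicators of superlevel sets, followed by a Riemann-type discretization in the level parameter and extraction of an a.e.\ convergent sequence. The one genuine difference is \emph{where} the smoothness of the level sets is manufactured, and it is exactly at your self-identified weak point. You propose to smooth each superlevel set $\{f>\lambda\}$ individually by strict approximation with smooth bounded domains (your step (ii)). The paper instead smooths $f$ itself first: it reduces to $f\ge0$ with $\|\nabla f\|_{\LL^1}<1$, mollifies to a $\CC^\infty_0$ function, and adds a small generic affine perturbation so that, by Sard's theorem and Morse theory, every relevant level set is automatically a finite disjoint union of smooth bounded domains. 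This buys a second thing your sketch passes over: for the equally spaced Riemann sums $\sum_j\frac1k\,\H^1(\partial^M\{f>t_j\})$ to be controlled by $\int\H^1(\partial^M\{f>\lambda\})\,\dd\lambda\le1$, mere measurability of $\lambda\mapsto\H^1(\partial^M\{f>\lambda\})$ is not enough --- Riemann sums of a merely integrable function at fixed nodes need not converge. In the paper the perturbation makes this function continuous, so the Riemann sums converge by definition; in your version you would need to pick the sample levels by a Fubini/averaging argument (a good offset exists for each mesh), or smooth $f$ first as the paper does. With that repair your argument goes through; note also that the normalization $1/\H^1(\partial\Omega)$ never actually threatens to blow up, since each atom carries the compensating weight $\H^1(\partial^M\{f>\lambda\})\,\dd\lambda$, and a superlevel set of positive finite measure cannot have vanishing perimeter.
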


\begin{proof}
We may assume without loss of generality that
\begin{equation}
\label{kb2} f\ge0,\quad \|\nabla f\|_{\LL^1}<1
\end{equation}
(see the proof of Lemma~\ref{lemc1}).
Since each function from $\BV(\R^2)$ can be approximated by functions from 
$\CC^\infty_0(\R^2)$ (see \cite[\S5.2.2]{EG}), we may also assume without loss of 
generality that
\begin{equation}
\label{kb3} f\in \CC^\infty_0(\R^2),\quad \supp f\subset B(0,R), \quad f(\R^2)\subset[0,M].
\end{equation}
For a parameter $\delta<1$ consider $f_\delta=f+g+c$, where $c$ is a constant and 
$g\colon \R^2\to\R$ is a linear function with small norm such that

(i) $\|\nabla f_\delta\|_{\LL^1(B(0,R))}<1$,

(ii) $\sup\limits_{x\in B(0,R)}|f(x)-f_\delta(x)|<\delta$,

(iii) all the critical values of the function $f_\delta$ are irrational 
numbers and they are regular in the sense of Morse theory,

(iv) for each rational $t>\delta$ we can decompose the preimage as
$$
\{x\in B(0,R): f_\delta(x)>t\}=\bigcup\limits_{i=1}^{m_t}\Omega_i ,
$$
where $\Omega_i$ are bounded smooth domains, and
$$
\Omega_i\cap\Omega_j=(\partial\Omega_i)\cap(\partial\Omega_j)=\emptyset\quad\mbox{for }i\ne j,
$$
$$
(\partial\Omega_i)\cap \partial B(0,R) =\emptyset\quad\mbox{for }i=1,\dots,m_t.
$$
Then the function $h\colon [\delta,M+1]\to\R$, defined by the formula
$$
h(t)=\H^1 \Bigl( B(0,R)\cap\{f_\delta=t\} \Bigr) ,
$$
is continuous, and hence in particular integrable in the Riemann sense. 
By (i) and by the Coarea formula we get
$$
\int_\delta^{M+1}h(t)\, \dd t<1.
$$
In view of the definition of the Riemann integral we have for sufficiently large 
$k\in\N$ that
$$
\sum\limits_{\N\ni j>k\delta}\frac1kh(t_j)<1,
$$
where $t_j=\frac{j}k$.
Write $E_j=\{x\in B(0,R):f_\delta(x)>\frac{j}k\}$ and $\tilde f_j=\frac1k{1_{E_j}}$.
By construction
\begin{equation}
\label{kb6} \|f-\sum\limits_{\N\ni j>k\delta}\tilde f_j\|_{L^\infty}<3\delta+\frac2k.
\end{equation}
Let $E_j=\bigcup\limits_{i=1}^{m_j}\Omega^i_j$,
where the $\Omega^i_j$ are defined in (iv).
By construction
\begin{equation}
\label{kb7} \sum\limits_{\N\ni j>k\delta}\sum\limits_{i=1}^{m_j}
\frac1k\H^1(\partial \Omega^i_j)=\sum\limits_{\N\ni j>k\delta}
\frac1kh(t_j)<1.
\end{equation}
Finally
\begin{equation}
\label{kb8}
\sum\limits_{\N\ni j>k\delta}\tilde f_j=
\sum\limits_{\N\ni j>k\delta}\sum\limits_{i=1}^{m_j}\alpha_{ij}
\frac{1_{\Omega^i_j}}{\H^1(\partial \Omega^i_j)},
\end{equation}
where
\begin{equation}
\label{kb9}
\alpha_{ij}=\frac{\H^1(\partial \Omega^i_j)}k,
\end{equation}
and consequently by (\ref{kb7}),
\begin{equation}
\label{kb10}
\sum\limits_{\N\ni j>k\delta}\sum\limits_{i=1}^{m_j}\alpha_{ij}<1.
\end{equation}
Formulas (\ref{kb6}),(\ref{kb8}) and (\ref{kb10}) give the required assertion. 
\end{proof}

\begin{df}
\label{db1} Let $\mu$ be a positive measure on $\mathbb R^2$. We
say that $\mu$ has property $(*)$ if $\mu$ is absolutely continuous with respect to
Lebesgue measure (so $\mu(I)=\int_Ig(x)\,\dd x$,\ \ where $g\in
\LL^1(\R^2)$) and
\begin{equation}
\label{cb2} \mu(I) \leq \ell(I)
\end{equation}
for any interval $I\subset\mathbb R^2$.
\end{df}

\begin{lem}
\label{lb2} {\sl If $f\in \BV(\R^2)$ and $\mu$ has property
$(*)$, then
\begin{equation}
\label{cb3} \Big|\int f d\mu\Big| \leq C\Vert f\Vert_{\BV},
\end{equation}
where $C$ does not depend on $\mu,\ f$. }
\end{lem}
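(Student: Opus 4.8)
The plan is to reduce the general estimate to the special measures appearing in Lemma~\ref{lemkb1}, for which the bound is essentially a restatement of the isoperimetric-type inequality in Lemma~\ref{lb1}. First I would normalise: by homogeneity in $f$ it suffices to treat the case $\|f\|_{\BV}\le 1$. I would also split $f=f^+-f^-$ (or invoke the construction in Lemma~\ref{lemc1}) to reduce to $f\ge 0$; the passage from $f$ to its positive and negative parts only costs a fixed constant in the $\BV$ norm, so it is harmless. Thus assume $f\ge 0$ and $\|f\|_{\BV}\le 1$.

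Next I would apply Lemma~\ref{lemkb1} to obtain functions $f_i\to f$ a.e., where each $f_i$ is a convex combination of functions from $\mathcal C\cup(-\mathcal C)$, say $f_i=\sum_k\beta_k\,\frac{1_{\Omega_k}}{\H^1(\partial\Omega_k)}$ with $\sum_k|\beta_k|\le C$ (the convex-combination structure gives $\sum_k|\beta_k|\le 1$, modulo the fixed constant from the reduction to $f\ge0$). For a single building block $\varphi=\frac{1_\Omega}{\H^1(\partial\Omega)}$ with $\Omega$ a bounded smooth domain, I need the estimate $\bigl|\int\varphi\,\dd\mu\bigr|\le C$. Writing $\mu(I)=\int_I g\,\dd x$, this amounts to
$$
\int_\Omega g(x)\,\dd x \le C\,\H^1(\partial\Omega).
$$
This is exactly the content of the capacitary/isoperimetric bound built into property~$(*)$: I would cover $\Omega$ by a Whitney-type decomposition into dyadic intervals $\{I_j\}$ with $\sum_j\ell(I_j)\le C\,\H^1(\partial\Omega)$ (the total side-length of a Whitney decomposition of an open set in the plane is comparable to the perimeter of its boundary, by a standard argument estimating, for each generation of cubes, the number of cubes meeting the boundary), and then
$$
\int_\Omega g\,\dd x=\sum_j\int_{I_j}g\,\dd x=\sum_j\mu(I_j)\le\sum_j\ell(I_j)\le C\,\H^1(\partial\Omega),
$$
using~\eqref{cb2}. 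Hence $\bigl|\int\varphi\,\dd\mu\bigr|\le C$ for every $\varphi\in\mathcal C\cup(-\mathcal C)$, and by linearity $\bigl|\int f_i\,\dd\mu\bigr|\le C\sum_k|\beta_k|\le C$ uniformly in $i$.

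Finally I would pass to the limit. Since $\mu$ is absolutely continuous with density $g\in\LL^1(\R^2)$ and the $f_i$ are uniformly bounded (their $\LL^\infty$ norms are controlled in the construction of Lemma~\ref{lemkb1}, as are the $f_i$'s supports after the reduction~\eqref{kb3}) and converge a.e.\ to $f$, dominated convergence gives $\int f_i\,\dd\mu\to\int f\,\dd\mu$, so $\bigl|\int f\,\dd\mu\bigr|\le C$, which is the claim with the constant independent of $\mu$ and $f$. The main obstacle is the geometric lemma that the total side-length of a Whitney decomposition of a smooth bounded domain $\Omega$ is $\lesssim\H^1(\partial\Omega)$; everything else is normalisation and a routine limiting argument. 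One must also be slightly careful that the approximating sequence from Lemma~\ref{lemkb1} stays uniformly bounded and suitably concentrated so that the dominated convergence step is legitimate, but this is guaranteed by~\eqref{kb3} and~\eqref{kb6} in that lemma's proof.
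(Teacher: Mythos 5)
Your overall strategy is the same as the paper's: reduce, via Lemma~\ref{lemkb1} and a limiting argument, to bounding $\int\varphi\,\dd\mu$ for the building blocks $\varphi=\frac{1}{\H^1(\partial\Omega)}1_\Omega$, and then use property~$(*)$. The limiting step is fine (the paper uses Fatou, which is slightly cleaner than dominated convergence since after the reduction the approximants are nonnegative, but your version works given the uniform bounds from~(\ref{kb3}) and~(\ref{kb6})). The problem is the geometric claim you rely on for the building blocks: it is false that the total side-length of a Whitney decomposition of a bounded open set $\Omega\subset\R^2$ is comparable to $\H^1(\partial\Omega)$. For a genuine Whitney decomposition one has $\ell(Q)\sim\dist(Q,\partial\Omega)$, hence
$$
\sum_Q\ell(Q)\;\sim\;\sum_Q\frac{\mathcal L^2(Q)}{\dist(Q,\partial\Omega)}\;\sim\;\int_\Omega\frac{\dd x}{\dist(x,\partial\Omega)},
$$
which is already $+\infty$ for the unit disc. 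Your sketched justification (counting, in each generation, the cubes meeting the boundary) gives roughly $\H^1(\partial\Omega)+2^{-k}$ per generation $k$, and the sum over infinitely many generations diverges; so the step as written fails.

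The fix is much simpler than a decomposition, and is exactly what the paper does: since $\Omega$ is a bounded connected open set with rectifiable boundary, $\diam\Omega\le\H^1(\partial\Omega)$ (project $\partial\Omega$ onto the line through two points realizing the diameter), so $\Omega$ is contained in a \emph{single} interval $I$ with $\ell(I)\sim\diam\Omega\le\H^1(\partial\Omega)$ (Lemma~\ref{lemD} if you want it dyadic). Then property~$(*)$ gives directly
$$
\int\varphi\,\dd\mu\;\le\;\frac{\mu(I)}{\H^1(\partial\Omega)}\;\lesssim\;\frac{\mu(I)}{\ell(I)}\;\le\;C .
$$
Note also that even had your covering existed, the first equality in your display should be an inequality, since a covering by dyadic intervals need not be a partition of $\Omega$. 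With the single-interval covering substituted for the Whitney claim, your argument coincides with the paper's proof.
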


\begin{proof}
Because of Lemma~\ref{lemkb1} and the Fatou lemma, it is sufficient to bound 
$\int\varphi \, \dd \mu$ for the functions of the form
$$
\varphi=\frac 1{\H^1(\partial \Omega)} 1_\Omega,
$$
where $\Omega$
is a bounded domain in $\mathbb R^2$ with
a smooth boundary $\partial \Omega$.
Obviously $\Omega \subset I$, where $I$ is an interval of size
$\ell(I)\sim \diam \Omega\leq \H^1(\partial \Omega)$.
Hence
$$
\int\varphi \, \dd \mu\leq \frac{\mu(I)}{\H^1(\partial \Omega)} \lesssim
\frac{\mu(I)}{\ell(I)} < C,
$$
as required.
\end{proof}

\begin{cor}
\label{lb3} {\sl
If $f\in \BV_2(\mathbb R^2)$ and $\mu$ is a measure with property
$(*)$, then
\begin{equation}
\label{cb4} \int|\nabla f| \, \dd \mu\leq C\Vert f\Vert_{\BV_2},
\end{equation}
where $C$ does not depend on $\mu,\ f$. }
\end{cor}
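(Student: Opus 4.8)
The plan is to reduce the $\BV_2$ estimate for $|\nabla f|$ to the $\BV$ estimate of Lemma~\ref{lb2} applied to the scalar components of the gradient. First I would set $g = |\nabla f|$ and observe that $g \le |D_1 f| + |D_2 f|$ pointwise (at least for a suitable good representative, or we can simply work componentwise from the start). Each $D_i f$ belongs to $\BV(\R^2)$ by the very definition of $\BV_2(\R^2)$, so Lemma~\ref{lb2} gives $\bigl|\int (D_i f)\,\dd\mu\bigr| \le C\|D_i f\|_{\BV}$. To handle the absolute value rather than $D_i f$ itself, I would invoke the standard fact that $h \in \BV(\R^2) \Rightarrow |h| \in \BV(\R^2)$ with $\||h|\|_{\BV}\le \|h\|_{\BV}$ (this is exactly the kind of truncation/composition estimate already used in the proof of Lemma~\ref{lemc1} via \cite[\S4.2.2]{EG}); applying Lemma~\ref{lb2} to $|D_i f|$ then yields $\int |D_i f|\,\dd\mu \le C\||D_i f|\|_{\BV} \le C\|D_i f\|_{\BV}$.

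Next I would sum over $i=1,2$:
$$
\int |\nabla f|\,\dd\mu \le \int \bigl(|D_1 f| + |D_2 f|\bigr)\,\dd\mu
\le C\bigl(\|D_1 f\|_{\BV} + \|D_2 f\|_{\BV}\bigr).
$$
Finally I would observe that $\|D_1 f\|_{\BV} + \|D_2 f\|_{\BV} = \|D_1 f\|_{\LL^1} + \|D_2 f\|_{\LL^1} + \|D(D_1 f)\|(\R^2) + \|D(D_2 f)\|(\R^2)$, and that each term on the right is dominated by the corresponding piece of $\|f\|_{\BV_2(\R^2)} = \|f\|_{\LL^1} + \|\nabla f\|_{\LL^1} + \|D^2 f\|(\R^2)$: indeed $\|D_i f\|_{\LL^1} \le \|\nabla f\|_{\LL^1}$, and $\|D(D_1 f)\|(\R^2) + \|D(D_2 f)\|(\R^2) \le C\|D^2 f\|(\R^2)$ since the four second-order derivative measures $D_j D_i f$ are precisely the components of $D^2 v$. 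Absorbing constants, this gives $\int |\nabla f|\,\dd\mu \le C\|f\|_{\BV_2}$ with $C$ independent of $\mu$ and $f$, as claimed.

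The only mildly delicate point is the pointwise (or measure-theoretic) inequality $|\nabla f| \le |D_1 f| + |D_2 f|$ for the representatives one is integrating against $\mu$: since $\mu$ is absolutely continuous with respect to $\mathcal L^2$, it suffices that this hold $\mathcal L^2$-a.e., which is immediate for any Borel representatives of $D_1 f, D_2 f$ and the Euclidean norm $|\nabla f| = (|D_1 f|^2 + |D_2 f|^2)^{1/2}$. I do not anticipate a genuine obstacle here; the corollary is essentially a bookkeeping consequence of Lemma~\ref{lb2} together with the elementary stability of $\BV$ under taking absolute values.
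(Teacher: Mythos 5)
Your argument is correct and is essentially the intended one: the paper gives no separate proof of Corollary~\ref{lb3}, treating it as an immediate consequence of Lemma~\ref{lb2} applied to the components of $\nabla f$, which is exactly what you do (via $|\nabla f|\le|D_1f|+|D_2f|$, the stability of $\BV$ under taking absolute values, and the bookkeeping $\|D_if\|_{\BV}\lesssim\|f\|_{\BV_2}$). No gaps.
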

By {\it a dyadic interval} we
understand a square of the form
$[\frac{k}{2^m},\frac{k+1}{2^m}]\times[\frac{l}{2^m},\frac{l+1}{2^m}]$,
where $k,l,m$ are integers.

\noindent
The following assertion is straightforward, and hence we omit its proof
here.

\begin{lem}
\label{lemD} {\sl For any bounded set $F\subset\R^2$ where exist
dyadic intervals $I_1,\dots,I_4$ such that $F\subset
I_1\cup\dots\cup I_4$ and $\ell(I_1)=\dots=\ell(I_4)\le2\diam F$.}
\end{lem}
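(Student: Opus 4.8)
The plan is to enclose $F$ in a small axis--parallel rectangle and then to cover that rectangle by a $2\times2$ block of dyadic intervals of an appropriately chosen generation. First I would dispose of trivialities: we may assume $d:=\diam F>0$, the case of the empty set or a single point being trivial. Set $a_i=\inf\{x_i:x\in F\}$ and $b_i=\sup\{x_i:x\in F\}$ for $i=1,2$; then $F\subset Q:=[a_1,b_1]\times[a_2,b_2]$ and $b_i-a_i\le d$ for each $i$. Now choose the (unique) integer $m$ with $2^{-m-1}<d\le 2^{-m}$ and put $\ell=2^{-m}$. Note that on the one hand $\ell<2\cdot 2^{-m-1}<2d=2\diam F$, giving the required size bound, and on the other hand $\ell\ge d\ge b_i-a_i$.

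Next I would argue coordinatewise. Let $k$ be the integer with $\frac{k}{2^m}\le a_1<\frac{k+1}{2^m}$. Since $b_1\le a_1+(b_1-a_1)\le a_1+\ell$ and $a_1+\ell<\frac{k+1}{2^m}+\frac{1}{2^m}=\frac{k+2}{2^m}$, we obtain $[a_1,b_1]\subset[\frac{k}{2^m},\frac{k+2}{2^m}]$, which is the union of the two consecutive dyadic segments $[\frac{k}{2^m},\frac{k+1}{2^m}]$ and $[\frac{k+1}{2^m},\frac{k+2}{2^m}]$. In exactly the same way, taking the integer $l$ with $\frac{l}{2^m}\le a_2<\frac{l+1}{2^m}$, one gets $[a_2,b_2]\subset[\frac{l}{2^m},\frac{l+2}{2^m}]$. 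Forming the Cartesian product of these two inclusions yields
$$
F\subset Q\subset\Big[\tfrac{k}{2^m},\tfrac{k+2}{2^m}\Big]\times\Big[\tfrac{l}{2^m},\tfrac{l+2}{2^m}\Big]=\bigcup_{i,j\in\{0,1\}}\Big[\tfrac{k+i}{2^m},\tfrac{k+i+1}{2^m}\Big]\times\Big[\tfrac{l+j}{2^m},\tfrac{l+j+1}{2^m}\Big].
$$
The four squares on the right-hand side are dyadic intervals with common side length $\ell=2^{-m}\le 2\diam F$, so they serve as the desired $I_1,\dots,I_4$.

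As the paper already signals, no step here is a genuine obstacle; the only point that deserves attention is the selection of the dyadic scale, which must be chosen so that $\diam F\le\ell\le 2\diam F$. The upper bound is what delivers the size conclusion $\ell(I_j)\le 2\diam F$, while the lower bound $\ell\ge\diam F$ is precisely what guarantees that each coordinate projection of $F$, having length at most $\ell$, can overlap at most two consecutive dyadic segments of that scale — and this is what keeps the number of covering squares down to $2\times2=4$.
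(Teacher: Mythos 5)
Your argument is correct and is exactly the straightforward scale-selection proof the paper has in mind when it omits the details: choosing the dyadic generation $2^{-m}$ with $\diam F\le 2^{-m}<2\diam F$ forces each coordinate projection to meet at most two consecutive dyadic segments, giving a $2\times2$ block of dyadic squares of the required size. (Only the degenerate case $\diam F=0$, where the stated bound $\ell(I_j)\le 2\diam F$ cannot hold at all, should really be excluded rather than called trivial, but this case is irrelevant to how the lemma is used.)
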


\begin{proof}[Proof of Theorem~\ref{Thh3.3}]
Fix $\varepsilon_0>0$ and take a decomposition $\nabla v=f_0+f_1$ from Lemma~\ref{lemc3}.
If $\delta$ from the conditions of Theorem~\ref{Thh3.3} is sufficiently small, we
may write
$$
E\subset\bigcup I_\alpha,
$$
where $\{I_\alpha\}$ is a collection of dyadic intervals satisfying
\begin{equation}
\label{cb8} \sum_\alpha\ell(I_\alpha)< 16\delta<
\frac{1}{K+1}\varepsilon_0
\end{equation}
(see Lemma~\ref{lemD}). Define
$$
\mathcal F= \left\{ J \, : \, J\subset\mathbb R^2 \text { dyadic interval}; \sum_{I_\alpha\subset J} 
\ell(I_\alpha)\geq \ell(J) \right\}.
$$
Thus $I_\alpha\in \mathcal F$ for each $\alpha$.
Denote by $\mathcal F^* =\{J_\beta\}$ the collection of maximal elements of $\mathcal F$.
Clearly
\begin{equation}
\label{cb9}
E\subset \bigcup_\alpha I_\alpha \subset \bigcup_\beta J_\beta ,
\end{equation}
and since dyadic intervals are either disjoint or contained in one another, the
$\{J_\beta\}$ are mutually disjoint.
It follows that
\begin{equation}
\label{cb10}
\sum_\beta\ell(J_\beta)\leq \sum_\beta\sum_{I_\alpha\subset
 J_\beta} \ell(I_\alpha) \leq \sum_\alpha\ell
(I_\alpha)\overset {(\ref{cb8})}< 16\delta<\frac{1}{K+1}\ve_0.
\end{equation}
Observe also that for any dyadic interval $Q\subset\mathbb R^2$,
\begin{equation}
\label{cb11}
\sum_{J_\beta\subset Q}\ell(J_\beta) \leq \sum_{I_\alpha\subset Q}\ell(I_\alpha)\leq 2\ell(Q).
\end{equation}
We used here that if $J_\beta\subset Q$ for some $\beta$, then either
$J_\beta=Q$ or $Q\not\in \mathcal F$ (because $J_\beta$ is maximal);
and in both cases (\ref{cb11}) holds.
Define the measure $\mu$ by
\begin{equation}
\label{cb12}
\mu = \left(\sum_\beta \frac 1{\ell(J_\beta)} 1_{J_\beta}\right) {\mathcal L}^{2}.
\end{equation}

\noindent 
{\bf Claim.}  $\frac1{48}\mu$ has property $(*)$.

\medskip

\noindent
Indeed, write for a dyadic interval $Q$,
$$
\mu(Q) =\sum_{J_\beta\subset Q} \ell(J_\beta)+\sum_{Q\subset J_\beta}\ \frac {\ell(Q)^2}{\ell(J_\beta)}
\leq 3\ell(Q),
$$
where we invoked (\ref{cb11}) and the fact that $Q\subset J_\beta$ for at most one $\beta$.
Then for any interval $I$ we have the estimate
$\mu(I)\le48\ell(I)$ (see Lemma~\ref{lemD}).
This proves the claim.

\noindent
Now return to $\H^1\big(v(E)\big)$. From (\ref{cb9}) we get
$$
v(E) \subset\bigcup_\beta v(J_\beta).
$$
Given $\varepsilon_0 > 0$ it follows from the conditions of Theorem~\ref{Thh3.3} and 
using Lemma~\ref{lem3.2} and inequality~(\ref{cb10}) that if $\delta> 0$ is
sufficiently small, then we may assume 
\begin{equation}
\label{cb13}
\sum\limits_{\beta}\|D^2v\|(J_\beta)<\varepsilon_0 ,
\end{equation}
By Lemma \ref{lb1} and properties~(\ref{c4})--(\ref{c6}), (\ref{cb4})

\begin{eqnarray*}
\sum_{\beta} \H^1(v(J_\beta)) &\leq& C\sum_{\beta} \|D^2v\|(J_\beta)+
C\sum_{\beta} \frac1{l(J_\beta)}\int_{J_\beta}|\nabla v|\\
&\leq&  C\varepsilon_0 +C\frac{K}{K+1}\varepsilon_0
+C\sum_{\beta} \frac1{l(J_\beta)}\int_{J_\beta}|f_1|\\
&=& C'\varepsilon_0+C\int |f_1| \,d\mu \leq C''\varepsilon_0.
\end{eqnarray*}
Since $\ve_0$ may be taken arbitrary small, it follows that
Theorem~\ref{Thh3.3} is proved.
\end{proof}

\section{Sard--type theorem}

Before stating the main result of this section we shall define our notion of
critical set for $v \in \BV^{2}_{\loc}(\Omega )$, where $\Omega \subset \R^2$
is open. First we let for $\varepsilon > 0$,
$$
E_\varepsilon=\{x\in\Omega\ :\ |\nabla v(x)|\le\varepsilon\},
$$
and note that $\Cl_M E_\varepsilon$ does not depend on the particular representative
we use for $\nabla v$ when defining $E_\varepsilon$. Define
$$
Z_{0v}=\bigcap\limits_{\varepsilon>0}\Cl_M E_\varepsilon ,
$$
and
$$
Z_{1v}=\{x\in\Omega\ : v \mbox{ is differentiable at  } x \mbox{ and } v'(x)=0\} ,
$$
where we refer to the continuous representative of $v$ alluded to in the introduction
(see also Lemma \ref{lem3.1.1} below).
The critical set for $v$ is the union $Z_v=Z_{0v}\cup Z_{1v}$.

\begin{ttt}
\label{Th1} {\sl Suppose $v\in \BV^{2}_{\loc}(\Omega)$, where
$\Omega$ is a domain in $\R^2$. Then $\H^1( v(Z_v))=0$.}
\end{ttt}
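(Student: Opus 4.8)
The plan is to deduce Theorem~\ref{Th1} from the Luzin $N$--property (Theorem~\ref{Thh3.3} and Corollary~\ref{Thh3.1}) by showing that the critical set $Z_v$ is $\H^1$--null, or at least that it can be covered in a way that makes its image small. First I would reduce to the case $v\in \BV_2(\R^2)$ by a standard localization and cutoff argument: since $\Omega$ is a domain and the claim is of a local nature (a countable union of relatively compact subdomains exhausts $\Omega$, and $\H^1$ and $\L^1=\H^1$ on $\R$ are both countably subadditive), it suffices to prove $\H^1(v(Z_v\cap K))=0$ for each compact $K\subset\Omega$, and on a neighbourhood of $K$ we may multiply $v$ by a smooth cutoff to obtain a global $\BV_2(\R^2)$--function agreeing with $v$ near $K$.

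Next I would treat the two pieces $Z_{0v}$ and $Z_{1v}$ of the critical set. For $Z_{1v}=\{x:\ v\text{ differentiable at }x,\ v'(x)=0\}$, the key point is that $v$ is $\CC^1$ along this set in an appropriate sense: if $x\in Z_{1v}$ then $v(y)-v(x)=o(|y-x|)$ as $y\to x$. A Morse--Sard--type covering argument then applies — for fixed $\varepsilon>0$, around each point of $Z_{1v}$ one finds arbitrarily small intervals $I$ with $\mathrm{osc}_I v\le\varepsilon\,\ell(I)$, and using Vitali's covering lemma together with the bound $\H^1(v(I))\lesssim\mathrm{osc}_I v$ one controls $\H^1(v(Z_{1v}))$; this is essentially the classical argument and here it is even easier since we are mapping into $\R$. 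For $Z_{0v}=\bigcap_{\varepsilon>0}\Cl_M E_\varepsilon$, I would use Corollary~\ref{lb3}: the gradient $\nabla v$ is ``small in measure'' near $Z_{0v}$, and combined with the local estimate $\H^1(v(I))\le C\{\|D^2v\|(I)+\frac1{\ell(I)}\int_I|\nabla v|\}$ of Lemma~\ref{lb1} (applied to a fine cover of a neighbourhood of $Z_{0v}$ by intervals where $|\nabla v|$ is on average $\le\varepsilon$), one gets $\H^1(v(Z_{0v}))\le C\varepsilon\cdot(\text{something bounded})+C\|D^2v\|(\text{nbhd})$; letting the neighbourhood shrink and then $\varepsilon\to0$ gives $\H^1(v(Z_{0v}))=0$. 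Here one must be careful that $\|D^2v\|$ has no atoms issue and that the density condition $\Cl_M E_\varepsilon$ genuinely forces a controlled cover — this is where Lemma~\ref{lem3.1.1} (quoted but not in the excerpt) about the structure of the non-differentiability set of $\BV_2$ functions will be used, to split $Z_{0v}$ further if needed.

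Alternatively, and perhaps more cleanly, I expect the intended argument is: show directly that $Z_v$ has $\H^1$--measure related properties so that $\H^1(v(Z_v))$ is controlled by $\H^1_\infty$ of a small cover, and then invoke Theorem~\ref{Thh3.3}. Concretely, one shows that for every $\delta>0$ the set $Z_v$ admits a cover by intervals (or sets) of total $\H^1_\infty$--content $<\delta$ on which $\|D^2v\|$ and $\frac1{\ell}\int|\nabla v|$ are simultaneously small — for the $Z_{0v}$ part this uses the density definition plus Corollary~\ref{lb3}, and for $Z_{1v}$ it uses $v'(x)=0$. Then Theorem~\ref{Thh3.3} applied to this cover immediately gives $\H^1(v(Z_v))<\varepsilon$ for every $\varepsilon$, hence $=0$.

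The main obstacle I anticipate is the $Z_{0v}$ part: unlike $Z_{1v}$, the points of $Z_{0v}$ need not be points of differentiability, so one cannot use a pointwise first-order Taylor estimate; instead one must exploit the measure-theoretic density condition $\underline D(E_\varepsilon,x)$ large together with the $\BV_2$ regularity to produce, at appropriate scales, intervals on which $\int_I|\nabla v|$ is genuinely small relative to $\ell(I)$. Making this quantitative — choosing the scales so that the ``good'' intervals both cover $Z_{0v}$ with small content and carry a small share of the fixed measure $\|D^2v\|$ — is the delicate bookkeeping step, and it is precisely where Corollary~\ref{lb3} and the subadditivity of $\|D^2v\|$ (Lemma~\ref{lem3.2}) do the work. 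Once that covering is in hand, Lemma~\ref{lb1} and Theorem~\ref{Thh3.3} finish the proof routinely.
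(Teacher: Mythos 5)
Your localization step and your use of Corollary~\ref{Thh3.1} to dispose of $\H^1$--null exceptional sets are fine, but the core of your argument fails for both pieces of $Z_v$. For $Z_{1v}$ you invoke ``the classical covering argument'': around each point of differentiability with $v'(x)=0$ find small squares $I$ with $\mathrm{osc}_I\, v\le\varepsilon\,\ell(I)$, take a Vitali cover, and sum $\H^1(v(I))\lesssim\varepsilon\,\ell(I)$. This cannot succeed: $Z_{1v}$ is in general a two--dimensional set, so any essentially disjoint cover by squares of side $\sim\delta$ has $\sum_j\ell(I_j)\sim\delta^{-1}$, and the bound $\varepsilon\sum_j\ell(I_j)$ blows up. This is precisely why the classical Morse--Sard theorem needs $\CC^2$ (second--order Taylor gives $\mathrm{osc}_I\, v\lesssim\ell(I)^2$, and $\sum_j\ell(I_j)^2$ is bounded), and Whitney's example, quoted in the introduction of this paper, is a $\CC^1$ function of two variables for which your first--order argument would ``prove'' a false statement. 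The same defect undermines your treatment of $Z_{0v}$: in $C\varepsilon\cdot(\text{something bounded})+C\|D^2v\|(\text{nbhd})$ the ``something bounded'' is again $\sum_j\ell(I_j)$, which is unbounded, and the second term tends to $\|D^2v\|(Z_{0v})$, which need not vanish. Your ``cleaner alternative'' (cover $Z_v$ by sets of small $\H^1_\infty$--content and apply Theorem~\ref{Thh3.3}) also cannot work, since $Z_v$ may have positive Lebesgue measure.

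The missing idea --- the heart of the paper's proof, following Landis and Pavlica--Zaj\'{i}\v{c}ek --- is topological rather than metric. One works with the whole sublevel set $E_\varepsilon=\{|\nabla v|\le\varepsilon\}$, which has finite perimeter for a.e.\ $\varepsilon$ by the coarea formula, and decomposes it into indecomposable components $E^i_\varepsilon$ (Lemma~\ref{lem8}). Any two points of $\Int_M E^i_\varepsilon$ outside a prescribed $\H^1$--null set can be joined by a rectifiable curve of length at most $2\H^1(\partial^M E^i_\varepsilon)+\delta$ lying in $\Int_M E^i_\varepsilon$ and avoiding the non--differentiability set $A_v$ of Lemma~\ref{lem3.1.1}; along such a curve $v$ is $\varepsilon$--Lipschitz, whence $\diam v(\Cl_M E^i_\varepsilon)\le 2\varepsilon\H^1(\partial^M E^i_\varepsilon)$ (Lemma~\ref{lem8.2}). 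Summing over $i$ (using Corollary~\ref{Thh3.1} for the discrepancies mod $\H^1$) gives $\H^1(v(\Cl_M E_\varepsilon))\le 2\varepsilon\H^1(\partial^M E_\varepsilon)$, and the coarea formula provides $\varepsilon_k\to0$ with $\varepsilon_k\H^1(\partial^M E_{\varepsilon_k})\to0$; since $Z_{0v}\subset\Cl_M E_\varepsilon$ for every $\varepsilon$, this yields $\H^1(v(Z_{0v}))=0$. Finally, $Z_{1v}$ is not handled by a separate covering argument: by Dorronsoro's structure theorem every point of $Z_{1v}\setminus A_v$ lies in $G_v\cup K_v$, where differentiability with $v'(x)=0$ forces the (half--space) approximate limits of $\nabla v$ to vanish, so $Z_{1v}\setminus A_v\subset Z_{0v}$, while $\H^1(v(A_v))=0$ by Corollary~\ref{Thh3.1}. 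The Luzin $N$--property thus enters only to annihilate null sets, not to bound the image of $Z_v$ directly.
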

The proof of Theorem~\ref{Th1} splits into a number of lemmas. Further we may assume, 
without loss of generality, that $\Omega=B(0,1)\subset\R^2$ and
$v\in \BV^2(\Omega)$.

\noindent
We require the following known result about differentiability properties of $\BV^2$-functions.

\begin{lem}[see \cite{Dor}, Theorems~B and 1]
\label{lem3.1.1} {\sl
We can choose the Borel representative of
$\nabla v$ such that there exist a decomposition $\R^2=K_v\cup G_v\cup
A_v$ and mappings $\lambda\colon \R^2\to\R^2$, $\mu\colon \R^2\to\R^2$,
$\nu\colon K_v\to \mathbb{S}^1$ with the following properties:

(i) $\H^1(A_v)=0$.

(ii) $K_v=\bigcup\limits_i K_i$, each $K_i$ is a compact subset of
some $\CC^1$--curve $L_i$; moreover, $\nu(x)$ is perpendicular to
$L_i$ if $x\in K_i$.

(iii) for all $x\in G_v$, \ $\nabla v(x)=\lambda(x)=\mu(x)$ and
$$
\lim_{r\searrow 0}\dashint_{B(x,r)}|\nabla v(z)-\nabla v(x)|^2\,\dd z=0,
$$
$$
\sup_{y\in B(x,r)}r^{-1}|v(y)-v(x)-y\cdot
\nabla v(x)|\to0\quad\mbox{as }r\searrow 0
$$ 
(i.e., $v$ is differentiable at $x$);

(iv) for all $x\in K_v$,
$$
\lim_{r\searrow 0}\dashint_{B_+(x,r)} \! |\nabla v(z)-\lambda(x)|^2\,\dd z=0,
$$
$$
\lim_{r\searrow 0}\dashint_{B_-(x,r)} \! |\nabla v(z)-\mu(x)|^2\,\dd z=0,
$$
$$
\sup_{y\in B_+(x,r)}r^{-1}|v(y)-v(x)-y\cdot
\lambda(x)|\to0\quad\mbox{as }r\searrow 0,
$$
$$
\sup_{y\in B_-(x,r)}r^{-1}|v(y)-v(x)-y\cdot
\mu(x)|\to0\quad\mbox{as }r\searrow 0,
$$ 
where 
$$
B_+(x,r)=\{y\in B(x,r)\,:\,(y-x)\cdot \nu(x)>0\} ,
$$ 
$$
B_-(x,r)=\{y\in B(x,r)\,:\,(y-x)\cdot \nu(x)<0\} .
$$ }
\end{lem}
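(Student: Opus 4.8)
The plan is to extract the decomposition from the fine structure theory of the $\BV$ vector field $w:=\nabla v\in\BV(\R^2,\R^2)$, combined with a measure--density analysis of $D^2v=Dw$ and the quantitative differentiation machinery of Dorronsoro; throughout, ``negligible'' means $\H^1$-null, and all such sets are swept into $A_v$, which at the end gives (i). \emph{The jump part:} apply the Federer--Vol'pert structure theorem to $w\in\BV(\R^2,\R^2)$. The set $S_w$ of points at which $w$ has no approximate limit is countably $\H^1$-rectifiable and coincides, up to an $\H^1$-null set, with the approximate jump set $J_w$, on which there are Borel maps $w^\pm\colon J_w\to\R^2$ (the one--sided approximate limits) and $\nu\colon J_w\to\mathbb S^1$ (the jump direction). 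Cover $J_w$, up to an $\H^1$-null set, by countably many compact sets $K_i$, each contained in a $\CC^1$ curve $L_i$; since the measure--theoretic normal of a countably $\H^1$-rectifiable set agrees $\H^1$-a.e. on $K_i$ with the normal of $L_i$, we may assume $\nu(x)\perp L_i$ for $x\in K_i$. Put $K_v=\bigcup_iK_i$ and set $\lambda=w^+$, $\mu=w^-$ on $K_v$; this yields (ii).

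\emph{The diffuse part is negligible off $J_w$.} Split $\|D^2v\|=\sigma+\tau$ into the diffuse part $\sigma=\|D^aw\|+\|D^cw\|$ and the jump part $\tau=\|D^jw\|$. Then $\tau$ is concentrated on $J_w$, hence $\H^1$-$\sigma$-finite, while $\sigma$ charges no $\H^1$-$\sigma$-finite set (its absolutely continuous part vanishes on $\Le^2$-null sets and its Cantor part vanishes on sets of finite $\H^1$-measure). The elementary comparison $\H^1(\{x:\limsup_{r\to0}\theta(B(x,r))/r>t\})\lesssim\theta(\R^2)/t$, applied with $\theta=\sigma$ and, separately, with $\theta$ the restriction of $\tau$ to $\R^2\setminus J_w$, shows that the set of $x\in\R^2\setminus J_w$ at which $\|D^2v\|$ has positive upper linear density is $\H^1$-$\sigma$-finite; but $\|D^2v\|$ restricted to $\R^2\setminus J_w$ charges no such set, so that set is in fact $\H^1$-null. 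Hence $\|D^2v\|(B(x,r))/r\to0$ for $\H^1$-a.e. $x\in\R^2\setminus J_w$. Let $G_v$ be the set of such points and $A_v=\R^2\setminus(K_v\cup G_v)$, which is $\H^1$-null by the above.

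\emph{Differentiability on $G_v$.} Fix $x\in G_v$. By the scaling--critical Sobolev--Poincar\'e inequality in the plane, $\|w-w_{x,r}\|_{\LL^2(B(x,r))}\le C\|Dw\|(B(x,r))$, we obtain
\[
\Bigl(\dashint_{B(x,r)}|w-w_{x,r}|^2\,\dd z\Bigr)^{1/2}\le\frac{C}{r}\,\|D^2v\|(B(x,r))\longrightarrow0 ,
\]
so $w_{x,r}$ converges to some $a\in\R^2$, which we take as the value $\nabla v(x)=\lambda(x)=\mu(x)$ of our representative, and $\dashint_{B(x,r)}|w-a|^2\to0$; this is the first limit in (iii). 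The remaining assertion in (iii), that $v$ is genuinely differentiable at $x$, i.e.\ $\sup_{y\in B(x,r)}r^{-1}|v(y)-v(x)-(y-x)\cdot a|\to0$, requires upgrading this $\LL^2$ control of $\nabla v$ to a uniform estimate on $v$ itself. A dyadic telescoping argument reduces this to summability properties of $\rho\mapsto\rho^{-1}\|D^2v\|(B(x,\rho))$ that the mere density decay of the previous step does not supply; instead one invokes Dorronsoro's quantitative differentiation machinery, in which $\BV(\R^2)$ is characterised by the finiteness of a square function formed from affine oscillations over cubes, and whose companion pointwise criterion converts the (for $\H^1$-a.e.\ $x$) finiteness of that square function into differentiability of $v$ at $x$; the residual $\H^1$-null set is absorbed into $A_v$. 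I expect this to be the main obstacle: because $\WW^{1,2}(\R^2)\not\hookrightarrow\CC^0$, the passage from integral control of the gradient to pointwise differentiability in the critical dimension $n=2$ cannot be made without exploiting the full $\BV$-structure of $\nabla v$, not merely its local square--integrability.

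\emph{One--sided differentials on $K_v$.} Finally, repeat the previous step on each $K_i$ with $B(x,r)$ replaced by the half--balls $B_\pm(x,r)$ determined by $\nu(x)$ and with $a$ replaced by the traces $\lambda(x),\mu(x)$. For $\H^1$-a.e.\ $x\in K_i$ the jump part of $\|D^2v\|$ contributes only $o(r)$ to $\|D^2v\|(B_\pm(x,r))$ --- the $\CC^1$ curve $L_i$ is tangent at $x$ to the line through $x$ orthogonal to $\nu(x)$, so $\H^1(L_i\cap B_\pm(x,r))=o(r)$ --- while its diffuse part again has vanishing upper linear density; the Sobolev--Poincar\'e inequality on a half--ball, which holds with a uniform constant, together with the same square--function argument then yields the four limits in (iv). Enlarging $A_v$ by the $\H^1$-null set of points of $K_v$ at which $\nu$, the traces, the $\CC^1$-parametrisations, or the half--ball square functions are defective completes the decomposition $\R^2=K_v\cup G_v\cup A_v$ with properties (i)--(iv).
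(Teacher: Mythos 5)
The first thing to note is that the paper does not prove this lemma at all: it is imported from Dorronsoro (\cite{Dor}, Theorems~B and~1), so there is no in-paper argument to compare yours against. Judged as a self-contained proof, your sketch has the expected architecture, and the parts you actually argue are correct and standard: the Federer--Vol'pert description of the jump set of $w=\nabla v$, the covering of $J_w$ (mod $\H^1$) by compacta inside $\CC^1$ curves with $\nu$ the measure-theoretic normal, and the density argument showing $\|D^2v\|(B(x,r))=o(r)$ for $\H^1$-a.e.\ $x\notin J_w$. But at both decisive analytic points --- the uniform differentiability in (iii) and the one-sided statements in (iv) --- you explicitly fall back on ``Dorronsoro's quantitative differentiation machinery'', i.e.\ on the very theorems the authors cite. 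So in substance your proof reduces to the same citation as the paper's, with the routine structure-theoretic scaffolding filled in.

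Two concrete points. First, you overestimate the difficulty of (iii): since $D^2v$ is a measure, the scaled Maz'ya-type estimate $\sup_{y\in B(x,r)}\bigl|v(y)-v(x)-(y-x)\cdot a\bigr|\le C\bigl(r\,\bigl|\dashint_{B(x,r)}(\nabla v-a)\,\dd z\bigr|+\|D^2v\|(B(x,r))\bigr)$ --- this is the estimate recorded as Lemma~\ref{lem1}, valid for $\BV_2$ by mollification --- converts the two facts you already have ($\LL^1$ convergence of the averages and $\|D^2v\|(B(x,r))=o(r)$) directly into full differentiability at every point of $G_v$; no square function is needed there, and the remark about $\WW^{1,2}(\R^2)\not\hookrightarrow\CC^0$ is beside the point because the relevant critical embedding is $\BV^2(\R^2)\subset\CC^0$, which does hold. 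Second, the one step you assert that is false as stated: $\H^1(L_i\cap B_\pm(x,r))=o(r)$. A $\CC^1$ curve tangent at $x$ to the line orthogonal to $\nu(x)$ may lie entirely inside one open half-ball (a parabola, say), so the jump part of $\|D^2v\|$ restricted to $B_+(x,r)$ can be of order $|\lambda(x)-\mu(x)|\,r$, not $o(r)$. Consequently (iv) cannot be obtained by simply rerunning the full-ball argument on half-balls with the jump part discarded; one must subtract the two one-sided affine models (or use Dorronsoro's approximation scheme) to handle the thin region between $J_w$ and its tangent line. This is precisely the delicate part for which Theorems~B and~1 of \cite{Dor} are invoked, by the authors and, implicitly, by you.
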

Observe that by our definitions the inclusion
$$
Z_v\supset\{x\in G_v : \nabla v(x)=0\}\cup\{x\in K_v:\mu(x)=0\mbox{ or }\lambda(x)=0\}
$$
holds.

\begin{lem}[\cite{Am}]
\label{lem8} {\sl For any Lebesgue measurable set $F\subset \R^2$
with $\H^1(\partial^M F)<\infty$ there is a finite or countable
family  $\{F_i\}_{i\in I}$ and a set $T\subset\R^2$ with the
following properties:

(i) $F_i$ are measurable sets, $\mathcal L^2 (F_i)>0$,
$\H^1(\partial^M F_i)<\infty$;

(ii) $F_i\cap F_j=\emptyset$ \ for $i\ne j$;

(iii) $(\partial^M F_i)\cap(\partial^M F_j)=\emptyset\ (\mathrm{mod}
\H^1)$ \ for $i\ne j$.

(iv) $\partial^M F=\bigcup\limits_{i\in I}\partial^M F_i\ (\mathrm{mod}
\H^1)$,\  so in particular, \linebreak$\H^1(\partial^M
F)=\sum\limits_{i\in I}\H^1(\partial^M F_i)$.

(v) $\H^1\biggl(\Int_M F\setminus\biggl(\bigcup\limits_{i\in
I}\Int_MF_i\biggr)\biggr)=0$.

(vi)  $\H^1(T)=0$.

(vii) For any set $L$ with $\H^1(L)=0$ and for any $x,y\in
\Int_MF_i\setminus (T\cup L)$ and $\delta>0$ there exists a
rectifiable curve $\Gamma\subset (\Int_M F_i)\setminus (T\cup L)$
joining $x$ to $y$ so that
$$
\H^1(\Gamma)\le|x-y|+\H^1(\partial^M F_i)+\delta.
$$
}
\end{lem}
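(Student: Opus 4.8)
The plan is to read off properties (i)--(vi) from the theory of decomposition of planar sets of finite perimeter into \emph{indecomposable components}, and to establish the quantitative connectivity statement (vii) by a separate geometric argument exploiting the Jordan-curve structure of such components.

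For (i)--(vi): since $\H^1(\partial^M F)<\infty$, Federer's theorem guarantees that $F$ has finite perimeter, and the decomposition theorem of \cite{Am} provides an at most countable family of essentially pairwise disjoint indecomposable sets $F_i$ with $\mathcal L^2(F_i)>0$, with $\sum_i\H^1(\partial^M F_i)=\H^1(\partial^M F)$, with $\partial^M F=\bigcup_i\partial^M F_i$ and $\Int_M F=\bigcup_i\Int_M F_i$ modulo $\H^1$, and with the sets $\partial^M F_i$ mutually disjoint modulo $\H^1$. Altering each $F_i$ on an $\mathcal L^2$-null set makes them genuinely disjoint. The set $T$ is then taken to be the union, over $F$ and over all the $F_i$, of the $\H^1$-null Federer exceptional sets (points that are neither density-$0$, density-$1$, nor reduced-boundary points of the set in question), together with the $\H^1$-null symmetric differences discarded in the reductions below; being a countable union of $\H^1$-null sets, $\H^1(T)=0$. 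This gives (i)--(vi).

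For (vii), fix an indecomposable component $G=F_i$, put $P=\H^1(\partial^M G)<\infty$, fix an arbitrary $\H^1$-null set $L$, and set $N=T\cup L$, again $\H^1$-null. Two ingredients are needed. The first is that $\Int_M G\setminus N$ is \emph{rectifiably path-connected}. The elementary point here is that for any fixed point $z$ the ``cone'' $\bigcup_{n\in N}\{z+s(n-z):s\ge0\}$ is $\mathcal L^2$-null, since the radial projection of $N$ from $z$ has zero $\H^1$ measure; hence, given a polygonal path, a generic small perturbation of its vertices yields a polygonal path of almost the same length avoiding $N$. Combined with the indecomposability of $G$ --- which prevents $\Int_M G$ (up to $\H^1$-null sets) from being split into two parts of positive measure with essentially disjoint measure-theoretic boundaries, and so forces $\Int_M G$ to be connected, possibly through ``pinch'' sets of positive $\H^1$ measure --- this yields the claim. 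The second ingredient is the length bound. Here one invokes the structure theorem for indecomposable planar sets of finite perimeter (\cite{Am} again): modulo $\mathcal L^2$, $G$ is a Jordan domain $V_0$ with countably many ``holes'' $\Cl(V_k)$ removed, the $V_k$ being essentially disjoint open Jordan subdomains of $V_0$ with $\sum_{k\ge0}\H^1(\partial V_k)=P$ (with the symmetric picture when $\mathcal L^2(G)=\infty$). Given $x,y\in\Int_M G\setminus N$, one follows a curve that is $\delta$-close to the segment $[x,y]$ and straight wherever it can be kept inside $G$; each maximal excursion outside $\Cl(G)$ is contained in a single hole $\Cl(V_k)$ (or outside $V_0$) and is rerouted, once and for all for that obstacle, along a curve on the $G$-side running $\varepsilon_k$-close to $\partial V_k$ (resp.\ to $\partial V_0$), at cost at most $\H^1(\partial V_k)+\varepsilon_k$. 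As the straight pieces have total length at most $|x-y|$ and the detours at most $\sum_k(\H^1(\partial V_k)+\varepsilon_k)\le P+\delta$, a final generic perturbation off $N$ produces the required curve $\Gamma\subset\Int_M G\setminus N$ with $\H^1(\Gamma)\le|x-y|+P+\delta$.

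I expect the geometric bookkeeping in the length bound to be the main obstacle. The holes may be infinitely many, non-convex and accumulating, and the curves $\partial V_k$ need not be locally finite, so the ``once and for all'' device needs justification: it works because all excursions of a nearly-straight curve into a given hole $V_k$ occur within a window of length at most $\diam V_k\le\H^1(\partial V_k)$, so a single detour around $V_k$ covers them, and the total detour length is then controlled by $\sum_k\H^1(\partial V_k)=P$; the possibility that the closures of several holes touch is absorbed by occasionally detouring around a whole cluster at once, still at cost at most its combined perimeter. A further subtlety is that $\Int_M G$ need not be open --- near a pinch set one connects $x$ or $y$ to the main region by a short curve weaving past the tiny holes that accumulate there --- but these pinch sets are the only obstruction, and they are handled locally. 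A cleaner variant replaces most of this bookkeeping by an approximation: one approximates $G$ from inside by open sets $U$ with polygonal boundary and $\H^1(\partial U)$ close to $P$, proves the elementary estimate that the intrinsic (geodesic) distance $d_U$ in $U$ satisfies $d_U(x,y)\le|x-y|+\H^1(\partial U)$ for the component of $U$ containing $x$ and $y$ (by following $[x,y]$ and detouring around the finitely many polygonal holes it meets, each used once), and then perturbs the resulting polygonal curve off $N$ inside the open set $U$.
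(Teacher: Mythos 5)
The paper does not actually prove this lemma: its ``proof'' is a citation (Proposition~3, Theorems~1 and~8, with the subsequent remark, of \cite{Am}). Your treatment of (i)--(vi) is therefore the same as the paper's, since you too read them off from the decomposition of a set of finite perimeter into indecomposable components in \cite{Am}. The divergence is in (vii): the paper takes the quantitative connectivity statement itself from \cite{Am}, whereas you attempt to rederive it from the hole structure of indecomposable sets, and it is exactly there that your argument has genuine gaps rather than a complete alternative proof.

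Concretely: (a) the assertion that indecomposability ``forces $\Int_M G$ to be connected, possibly through pinch sets'' is not a proof, and it misstates the difficulty --- since the curve must avoid an \emph{arbitrary} prescribed $\H^1$-null set $L$ (which may contain any countable set, in particular any isolated pinch points), what has to be shown is that connectivity is achieved through ``thick'' channels; this robust, quantitative path-connectivity of $\Int_M G$ minus a null set is precisely the nontrivial content of the cited theorem in \cite{Am}, not a consequence of the perturbation remark about radial projections. (b) The detour bookkeeping does not close as stated: the ``window of length at most $\diam V_k$'' argument applies to the straight segment $[x,y]$, but once you reroute, the curve is no longer nearly straight, so later excursions can re-enter a hole already ``used''; moreover a detour ``$\varepsilon_k$-close to $\partial V_k$ on the $G$-side'' need not exist in $\Int_M G$ when other holes (or the outer boundary) accumulate on $\partial V_k$, and the fix of ``detouring around a whole cluster at cost at most its combined perimeter'' is asserted without any argument that such a curve exists inside $\Int_M G\setminus(T\cup L)$ and has the claimed length. (c) The ``cleaner variant'' has the same issues plus two more: the inner open approximations $U$ with $\H^1(\partial U)$ close to $P$ must be produced so that they contain the two given points $x,y$ (these are merely density points of $G$, not interior points), and the geodesic estimate $d_U(x,y)\le |x-y|+\H^1(\partial U)$ is itself proved only by the same ``each hole used once'' heuristic. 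To make the lemma rigorous you should either cite the relevant results of \cite{Am} for (vii) as the paper does, or supply a genuine proof of the intrinsic-distance bound for indecomposable planar sets, which is a substantial piece of work in its own right.
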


\begin{proof}
See Proposition 3,  Theorems 1 and 8 (together with the
subsequent remark) from \cite{Am}. 
\end{proof}

\begin{lem}
\label{lem8.1} {\sl If the set $F$ in Lemma~\ref{lem8} is bounded, then we can 
reformulate the property (vii) in the following way:

(vii') for any set $L$ with $\H^1(L)=0$ and for any $x,y\in
(\Int_MF_i)\setminus (T\cup L)$ and $\delta>0$ there exists a
rectifiable curve $\Gamma\subset (\Int_M F_i)\setminus (T\cup L)$
joining $x$ to $y$ so that
$$
\H^1(\Gamma)\le 2\H^1(\partial^M F_i)+\delta.
$$
}
\end{lem}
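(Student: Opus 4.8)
The plan is to deduce (vii$'$) directly from (vii). Fix a set $L$ with $\H^1(L)=0$, points $x,y\in(\Int_MF_i)\setminus(T\cup L)$, and $\delta>0$, and let $\Gamma\subset(\Int_MF_i)\setminus(T\cup L)$ be the rectifiable curve supplied by~(vii), so that $\H^1(\Gamma)\le|x-y|+\H^1(\partial^MF_i)+\delta$. Everything then reduces to the purely geometric estimate
\begin{equation}
\label{diamper}
|x-y|\le\H^1(\partial^MF_i),
\end{equation}
after which $\H^1(\Gamma)\le 2\H^1(\partial^MF_i)+\delta$, as required. (The argument below in fact yields $|x-y|\le\tfrac12\H^1(\partial^MF_i)$, so one even gets the constant $\tfrac32$ in~(vii$'$).)

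To prove~(\ref{diamper}) I would use that each $F_i$ is bounded (since $F$ is) and that, by the construction in~\cite{Am}, each $F_i$ may be taken to be an \emph{indecomposable} set of finite perimeter. Let $e\in\mathbb S^1$ point in the direction of $y-x$, split $\R^2=e^\perp\oplus\R e$, and let $t_x,t_y\in\R$ be the $\R e$-coordinates of $x$ and $y$, so that $|t_x-t_y|=|x-y|$. Consider the essential projection $P=\{t\in\R:\H^1(F_i\cap(te+e^\perp))>0\}$ of $F_i$ onto $\R e$. I claim first that $P$ agrees, modulo an $\mathcal L^1$-null set, with a bounded open interval $(\alpha,\beta)$: it is bounded because $F_i$ is, and if it had an essential gap around some level $t_0$ then $F_i$ would have zero $\mathcal L^2$-measure near the line $t_0e+e^\perp$, so cutting $F_i$ along that line would display it as a disjoint union of two sets of positive measure creating no reduced boundary — contradicting indecomposability. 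I claim second that $t_x,t_y\in[\alpha,\beta]$: since $x$ has density $1$ for $F_i$, Fubini's theorem shows that every interval $(t_x-r,t_x+r)$ meets $P$ in a set of positive $\mathcal L^1$-measure, whence $t_x\in\Cl P=[\alpha,\beta]$, and similarly for $t_y$; thus $|x-y|=|t_x-t_y|\le\beta-\alpha$. Finally I would slice the function $1_{F_i}\in\BV(\R^2)$ along the lines $te+e^\perp$: for a.e.\ $t\in(\alpha,\beta)$ the slice $F_i\cap(te+e^\perp)$ is a bounded subset of $\R$ of positive measure, so the variation of its indicator is at least $2$, and hence
$$
\int_\alpha^\beta\!\Var\bigl(1_{F_i\cap(te+e^\perp)}\bigr)\,\dd t\ \ge\ 2(\beta-\alpha)\ \ge\ 2\,|x-y| .
$$
The left-hand side equals $\|D_{e'}1_{F_i}\|(\R^2)$ (with $e'$ a unit vector spanning $e^\perp$) by the one-dimensional slicing theorem for $\BV$ functions, and is therefore at most $\|D1_{F_i}\|(\R^2)=\H^1(\partial^MF_i)$ by Federer's identification of the reduced boundary. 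This gives $|x-y|\le\tfrac12\H^1(\partial^MF_i)$, which is stronger than~(\ref{diamper}).

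The substantive ingredients are the structural fact from~\cite{Am} that the pieces $F_i$ can be taken indecomposable and the two claims about the essential projection $P$; the first claim rests on the cutting argument sketched above, and the second is a routine application of Fubini's theorem and the $\BV$ slicing theorem. The one point requiring care — and the step I would expect to be the main (if modest) obstacle — is that the density points $x,y$ need not lie in $P$ in any pointwise sense, only their $\mathcal L^2$-density-$1$ property being available, yet they must still land in the \emph{closed} interval $[\alpha,\beta]$; this is precisely what the Fubini argument in the second claim provides. The remaining assembly of the slicing inequality and the identification of perimeter with $\H^1$ of the essential boundary is standard.
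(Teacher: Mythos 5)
Your overall strategy is sound: reducing (vii$'$) to the geometric inequality $|x-y|\le\H^1(\partial^M F_i)$ and proving the latter by slicing is legitimate (the paper itself only cites \cite[Lemma~4.2]{PZ} here, and that lemma is proved in the same spirit, via a diameter--perimeter bound). The slicing part of your argument is fine: for a.e.\ $t$ in the essential projection the slice is a bounded set of positive measure, so its variation is at least $2$, and $\int\Var\,\dd t\le\|D1_{F_i}\|(\R^2)=\H^1(\partial^M F_i)$ by Federer. The Fubini argument placing $t_x,t_y$ in $[\alpha,\beta]$ is also correct. The genuine gap is in your first claim, namely in the asserted dichotomy ``either the essential projection $P$ is equivalent to an interval, or there is an essential gap around some level $t_0$, i.e.\ $F_i$ has zero $\L^2$-measure in a strip near the line $t_0e+e^\perp$.'' This dichotomy is false as stated: the set $(\alpha,\beta)\setminus P$ can have positive $\L^1$-measure while being nowhere dense (a ``fat Cantor'' type set), in which case there is no strip that $F_i$ essentially avoids, and your cutting argument never gets off the ground. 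So, as written, indecomposability has not been contradicted and the interval structure of $P$ is not established. (A secondary point: indecomposability of the $F_i$ is not among the properties (i)--(vii) recorded in Lemma~\ref{lem8}; it does hold for the decomposition of \cite{Am}, but you are importing it from the construction rather than from the stated lemma.)

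Both defects are repairable. For the first: if $\L^1((\alpha,\beta)\setminus P)>0$, cut not at a point of an (possibly nonexistent) gap but at a Lebesgue density point $c\in(\alpha,\beta)$ of $\R\setminus P$. For any point $z$ on the line $\{t=c\}$ one has $\L^2(F_i\cap B(z,r))\le 2r\,\L^1(P\cap(c-r,c+r))=o(r^2)$, so every point of that line has $F_i$-density zero; hence cutting along it creates no new essential boundary, the perimeters of the two halves add up to $P(F_i)$, and since both halves have positive measure (as $\alpha<c<\beta$) this contradicts indecomposability. This yields $P=(\alpha,\beta)$ mod $\L^1$, and your proof then closes. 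Alternatively, you can avoid indecomposability altogether by using the curve $\Gamma$ already provided by (vii): its projection onto $\R e$ contains the whole segment $[t_x\wedge t_y,\,t_x\vee t_y]$, and every point $z\in\Gamma$ lies in $\Int_M F_i$, so the estimate $\L^2(F_i\cap B(z,r))\le 2r\,\L^1(P\cap(t(z)-r,t(z)+r))$ together with density one forces the upper density of $\R\setminus P$ at $t(z)$ to be at most $1-\pi/4<1$; by the Lebesgue density theorem a.e.\ point of the segment then lies in $P$, and the slicing bound again gives $|x-y|\le\L^1(P)\le\tfrac12\H^1(\partial^M F_i)$, which is all you need.
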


\begin{proof}
See \cite[Lemma~4.2]{PZ}. 
\end{proof}

\begin{lem}
\label{lem8.2} {\sl Suppose
$\H^1(\partial^ME_\varepsilon)<\infty$. Let $E^i_{\varepsilon}$ be
the sets from Lemmas~\ref{lem8}-\ref{lem8.1} applying to
$F=E_\varepsilon$. Then
$\diam(v(\Cl_ME^i_{\varepsilon}))\le2\varepsilon
\H^1(\partial^ME^i_{\varepsilon})$.}
\end{lem}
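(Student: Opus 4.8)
The plan is to exploit the two facts that (a) on $\Int_M E_\varepsilon$ the gradient $\nabla v$ is small, in the sense that $|\nabla v|\le\varepsilon$ $\L^2$-a.e.\ on $E_\varepsilon$, and (b) by the connectivity property (vii$'$) of Lemma~\ref{lem8.1}, any two ``good'' points of $\Int_M E^i_\varepsilon$ can be joined \emph{inside} $\Int_M E^i_\varepsilon$ by a rectifiable curve of length at most $2\H^1(\partial^M E^i_\varepsilon)+\delta$. Combining these, for two good points $x,y$ one integrates $\nabla v$ along such a curve $\Gamma$ and gets $|v(x)-v(y)|\le\int_\Gamma|\nabla v|\,\dd\H^1\le\varepsilon\,\H^1(\Gamma)\le\varepsilon(2\H^1(\partial^M E^i_\varepsilon)+\delta)$. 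Letting $\delta\searrow0$ and then passing to the closure via continuity of $v$ gives $\diam(v(\Cl_M E^i_\varepsilon))\le 2\varepsilon\H^1(\partial^M E^i_\varepsilon)$.

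First I would fix $i$ and set $F_i=E^i_\varepsilon$, $T$ the exceptional set from Lemma~\ref{lem8}. Let $L$ be the $\H^1$-null set off which $v$ is differentiable with $\nabla v$ equal to its precise (Lebesgue) representative, together with the $\H^1$-null set $A_v$ from Lemma~\ref{lem3.1.1}; note $\H^1(L)=0$. For good points $x,y\in(\Int_M F_i)\setminus(T\cup L)$ and $\delta>0$, take the curve $\Gamma\subset(\Int_M F_i)\setminus(T\cup L)$ from (vii$'$). Since $\Gamma$ is rectifiable and $v$ is (Frechet-)differentiable at each point of $\Gamma$, the function $t\mapsto v(\gamma(t))$ along an arclength parametrization $\gamma$ of $\Gamma$ is Lipschitz with $|\frac{\dd}{\dd t}v(\gamma(t))|\le|\nabla v(\gamma(t))|$ a.e.; hence $|v(x)-v(y)|\le\int_0^{\H^1(\Gamma)}|\nabla v(\gamma(t))|\,\dd t$. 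On $\Gamma\subset\Int_M F_i\subset\Cl_M E_\varepsilon$ we have $|\nabla v|\le\varepsilon$ (this holds $\L^2$-a.e.\ on $E_\varepsilon$ by definition, and since $x\in\Int_M F_i$ has density $1$ for $E_\varepsilon$, the precise representative of $\nabla v$, when it exists, is $\le\varepsilon$ there — this is exactly where the ``$\Cl_M$'' and the choice of good points enter). Therefore $|v(x)-v(y)|\le\varepsilon\,\H^1(\Gamma)\le\varepsilon\bigl(2\H^1(\partial^M F_i)+\delta\bigr)$.

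Next, since $\delta>0$ was arbitrary, $|v(x)-v(y)|\le2\varepsilon\,\H^1(\partial^M F_i)$ for all good $x,y$. The good points are $\L^2$-dense in $\Int_M F_i$ (their complement is $\H^1$-null, in particular $\L^2$-null, and $\L^2(F_i)>0$), and since $v$ is continuous the same bound extends to all $x,y\in\Cl(\Int_M F_i)$, hence to $\Cl_M F_i$ (because $\Cl_M F_i\subset\Cl(\Int_M F_i)$ modulo the density considerations — more carefully, every point of $\Cl_M F_i$ is a limit of points of $\Int_M F_i$ since $\bar D(F_i,\cdot)>0$ forces nearby density-$1$ points off any $\L^2$-null set). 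This yields $\diam v(\Cl_M F_i)=\sup_{x,y\in\Cl_M F_i}|v(x)-v(y)|\le2\varepsilon\,\H^1(\partial^M F_i)$, as claimed.

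The main obstacle is the differentiation-along-a-rectifiable-curve step together with correctly identifying where $|\nabla v|\le\varepsilon$: one must know that the Borel representative of $\nabla v$ used in Lemma~\ref{lem3.1.1} agrees with the ``small'' a.e.\ bound on $E_\varepsilon$ at all the good points of $\Int_M F_i$, and that $t\mapsto v(\gamma(t))$ is absolutely continuous with the expected chain rule — this is where the full-differentiability off $A_v$ from Lemma~\ref{lem3.1.1}(iii)--(iv) is essential (a mere a.e.\ gradient bound would not suffice along a measure-zero curve). The passage from good points to $\Cl_M$ via continuity of $v$ is then routine.
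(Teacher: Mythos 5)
Your proposal is correct and follows essentially the same route as the paper's (very terse) proof: the paper simply sets $L=A_v$ in property (vii$'$) of Lemma~\ref{lem8.1} and observes that $v|_\Gamma$ is then $\varepsilon$--Lipschitz, which is exactly the differentiability-along-the-curve argument you spell out. The details you supply --- why the Borel representative of $\nabla v$ is bounded by $\varepsilon$ at density-one points of $E_\varepsilon$, and why the bound extends from the good points to all of $\Cl_M E^i_\varepsilon$ by continuity --- are the correct fleshing-out of what the paper leaves implicit.
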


\begin{proof}
In property (vii') of Lemma~\ref{lem8.1} put $L=A_v$,
where $A_v$ is defined in Lemma~\ref{lem3.1.1}. Then
the restriction $v|_\Gamma$ is $\varepsilon$--Lipschitz. 
\end{proof}

\begin{lem}
\label{lem8.3} {\sl For any $\varepsilon>0$ the inequality
$\H^1(v(\Cl_ME_{\varepsilon}))\le2\varepsilon
\H^1(\partial^ME_{\varepsilon})$ holds.}
\end{lem}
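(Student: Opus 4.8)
The plan is to glue together the local estimates of Lemma~\ref{lem8.2} over the pieces produced by Lemmas~\ref{lem8}--\ref{lem8.1}, and to absorb the leftover $\H^1$-null set via the Luzin $N$-property of Section~2. If $\H^1(\partial^M E_\varepsilon)=\infty$ there is nothing to prove, so fix $\varepsilon>0$ and assume $\H^1(\partial^M E_\varepsilon)<\infty$; extend $v$ to a function in $\BV^2(\R^2)$ (possible since we have reduced to $\Omega=B(0,1)$), so that its continuous representative is defined and bounded on the compact set $\Cl_M E_\varepsilon\subset\Cl\Omega$. Let $\{E^i_\varepsilon\}_{i\in I}$ and $T$ be the objects of Lemmas~\ref{lem8}--\ref{lem8.1} applied to $F=E_\varepsilon$, with the Borel representative of $\nabla v$ from Lemma~\ref{lem3.1.1} fixed once and for all.

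First I would establish the covering
$$
\Cl_M E_\varepsilon=\Int_M E_\varepsilon\cup\partial^M E_\varepsilon\subset\Big(\bigcup_{i\in I}\Cl_M E^i_\varepsilon\Big)\cup N,
$$
where $N$ is the union of the two ``defect'' sets $\Int_M E_\varepsilon\setminus\bigcup_i\Int_M E^i_\varepsilon$ and $\partial^M E_\varepsilon\setminus\bigcup_i\partial^M E^i_\varepsilon$; by parts (iv) and (v) of Lemma~\ref{lem8} we have $\H^1(N)=0$. Applying $v$ gives $v(\Cl_M E_\varepsilon)\subset\bigcup_i v(\Cl_M E^i_\varepsilon)\cup v(N)$. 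For each $i$ the set $v(\Cl_M E^i_\varepsilon)$ is a bounded subset of $\R$, hence lies in a closed interval of length $\diam v(\Cl_M E^i_\varepsilon)$, so $\H^1(v(\Cl_M E^i_\varepsilon))\le\diam v(\Cl_M E^i_\varepsilon)\le 2\varepsilon\,\H^1(\partial^M E^i_\varepsilon)$ by Lemma~\ref{lem8.2}. Since $\H^1(N)=0$, Corollary~\ref{Thh3.1} yields $\H^1(v(N))=0$. Finally, summing over $i$ (countable subadditivity of $\H^1$) and using the additivity $\sum_i\H^1(\partial^M E^i_\varepsilon)=\H^1(\partial^M E_\varepsilon)$ from Lemma~\ref{lem8}(iv),
$$
\H^1\big(v(\Cl_M E_\varepsilon)\big)\le\sum_{i\in I}\H^1\big(v(\Cl_M E^i_\varepsilon)\big)+\H^1(v(N))\le 2\varepsilon\sum_{i\in I}\H^1(\partial^M E^i_\varepsilon)=2\varepsilon\,\H^1(\partial^M E_\varepsilon),
$$
which is the claim.

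The set-theoretic bookkeeping of the covering and the elementary observation that a bounded subset of $\R$ has $\H^1$-measure at most its diameter are routine. The one step carrying real content is $\H^1(v(N))=0$: here the Luzin $N$-property (Corollary~\ref{Thh3.1}) is genuinely needed, since $N$ is merely $\H^1$-null and a general continuous function could enlarge its image. Keeping Lemma~\ref{lem8.2} and Lemma~\ref{lem3.1.1} consistent (the same Borel representative of $\nabla v$, so that $v|_\Gamma$ is $\varepsilon$-Lipschitz along the connecting curves avoiding $A_v$) is the other point to handle with care; the remainder is a direct assembly.
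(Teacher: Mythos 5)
Your proof is correct and follows essentially the same route as the paper: decompose $\Cl_M E_\varepsilon$ via Lemma~\ref{lem8} (iv)--(v) up to an $\H^1$-null set, bound each $\H^1(v(\Cl_M E^i_\varepsilon))$ by its diameter and Lemma~\ref{lem8.2}, kill the image of the null defect set with Corollary~\ref{Thh3.1}, and sum using the additivity in (iv). Your version merely spells out the bookkeeping (the defect set $N$, the diameter-to-measure step, the reduction to $\H^1(\partial^M E_\varepsilon)<\infty$) that the paper leaves implicit.
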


\begin{proof}
Suppose $\H^1(\partial^ME_\varepsilon)<\infty$. From
properties (iv)-(v) of Lemma~\ref{lem8} we have $\Cl_M
E_\varepsilon=\bigcup\limits_{i\in I}\Cl_M E^i_\varepsilon\ (\mod
\H^1)$. So from Corollary~\ref{Thh3.1} we obtain
$$
\H^1(v(\Cl_M E_{\varepsilon}))\le\sum\limits_{i\in I}\H^1(v(\Cl_M
E^i_\varepsilon))\le2\varepsilon\sum\limits_{i\in
I}\H^1(\partial^ME^i_{\varepsilon})=
2\varepsilon\H^1(\partial^ME_{\varepsilon}) ,
$$ 
where the last equality follows from property~(iv) of Lemma~\ref{lem8}. 
\end{proof}

\begin{cor}
\label{lem9} {\sl For any $\varepsilon>0$  the estimate
\begin{equation} \label{8}
\H^1(v(\Cl_M E_\varepsilon))\le
2\varepsilon\bigl[\H^1(\Omega\cap\partial^ME_\varepsilon)+\H^1(\partial\Omega)\bigl]
\end{equation}
holds.}
\end{cor}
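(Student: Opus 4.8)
The plan is to deduce Corollary~\ref{lem9} directly from Lemma~\ref{lem8.3} by controlling the measure-theoretic boundary $\partial^M E_\varepsilon$ in terms of the topological boundary of $\Omega$. Recall that $E_\varepsilon = \{x\in\Omega : |\nabla v(x)|\le\varepsilon\}$ is a subset of $\Omega$. The point is that when we pass from $\Omega$ to $\R^2$ and consider $E_\varepsilon$ as a subset of the plane, the essential boundary $\partial^M E_\varepsilon$ (computed in $\R^2$) can only gain points that lie on or near $\partial\Omega$ compared with $\Omega\cap\partial^M E_\varepsilon$. More precisely, I would argue that
$$
\partial^M E_\varepsilon \subset (\Omega\cap\partial^M E_\varepsilon)\cup\partial\Omega ,
$$
since any point $x\in\Omega$ has a neighbourhood contained in $\Omega$, so its density with respect to $E_\varepsilon$ is unaffected by whether we view $E_\varepsilon$ inside $\Omega$ or inside $\R^2$; and any point not in $\Cl\Omega$ has $\bar D(E_\varepsilon,x)=0$, hence is not in $\Cl_M E_\varepsilon$ and a fortiori not in $\partial^M E_\varepsilon$. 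Therefore
$$
\H^1(\partial^M E_\varepsilon) \le \H^1(\Omega\cap\partial^M E_\varepsilon) + \H^1(\partial\Omega).
$$

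Given this inclusion, the corollary follows immediately: Lemma~\ref{lem8.3} gives $\H^1(v(\Cl_M E_\varepsilon))\le 2\varepsilon\,\H^1(\partial^M E_\varepsilon)$ (the lemma is vacuous, i.e.\ trivially true with the convention $\infty$ on the right, when $\H^1(\partial^M E_\varepsilon)=\infty$), and combining with the boundary estimate above yields
$$
\H^1(v(\Cl_M E_\varepsilon)) \le 2\varepsilon\bigl[\H^1(\Omega\cap\partial^M E_\varepsilon)+\H^1(\partial\Omega)\bigr],
$$
which is exactly~(\ref{8}). Here one uses the normalisation reduction already in force (namely $\Omega=B(0,1)$, so $\H^1(\partial\Omega)=2\pi<\infty$), though the statement is phrased for general $\Omega$ and the inequality is content-free when $\H^1(\partial\Omega)=\infty$.

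The only genuine point requiring care — and the step I expect to be the main obstacle — is the verification that $\partial^M E_\varepsilon\subset(\Omega\cap\partial^M E_\varepsilon)\cup\partial\Omega$, i.e.\ that points of $\partial^M E_\varepsilon$ lying outside $\Omega$ must in fact lie on $\partial\Omega$. This is where the subtlety of working with the essential (measure-theoretic) boundary rather than the topological boundary enters: one must note that $E_\varepsilon\subset\Omega$ is defined only up to sets of $\L^2$-measure zero (the choice of representative of $\nabla v$ is immaterial by the remark preceding Lemma~\ref{lem3.1.1} that $\Cl_M E_\varepsilon$ is representative-independent), so for $x\notin\Cl\Omega$ we get $\bar D(E_\varepsilon,x)=0$ and hence $x\notin\Cl_M E_\varepsilon\supset\partial^M E_\varepsilon$, while for $x\in\Omega$ the density ratios defining $\bar D$ and $\underline D$ are the same whether $E_\varepsilon$ is regarded as a subset of $\Omega$ or of $\R^2$ (for all sufficiently small $r$ the ball $B(x,r)$ lies inside $\Omega$). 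Everything else is a one-line chaining of the displayed inequalities.
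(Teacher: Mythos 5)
Your proof is correct and is essentially the argument the paper intends: the corollary is stated without proof immediately after Lemma~\ref{lem8.3}, the implicit step being precisely your inclusion $\partial^M E_\varepsilon\subset(\Omega\cap\partial^M E_\varepsilon)\cup\partial\Omega$ (valid since $E_\varepsilon\subset\Omega$ forces zero density outside $\Cl\Omega$), followed by subadditivity of $\H^1$.
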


\begin{cor}
\label{cor10} {\sl The convergence
\begin{equation} \label{9}
\H^1(v(\Cl_M E_\varepsilon))\to0\quad\mbox{as }\varepsilon\to0+
\end{equation}
holds.}
\end{cor}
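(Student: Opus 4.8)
The plan is to feed the quantitative estimate of Corollary~\ref{lem9} into the coarea formula, after first reducing to a single sequence $\varepsilon\to0$ by a monotonicity remark. Put $\phi(\varepsilon):=\H^1\bigl(v(\Cl_M E_\varepsilon)\bigr)$. If $\varepsilon_1<\varepsilon_2$ then $E_{\varepsilon_1}\subset E_{\varepsilon_2}$, hence $\Cl_M E_{\varepsilon_1}\subset\Cl_M E_{\varepsilon_2}$ (the upper density $\bar D(\cdot,x)$ is monotone with respect to set inclusion), hence $v(\Cl_M E_{\varepsilon_1})\subset v(\Cl_M E_{\varepsilon_2})$ and $\phi(\varepsilon_1)\le\phi(\varepsilon_2)$. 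So $\phi$ is nondecreasing, the limit $\lim_{\varepsilon\to0+}\phi(\varepsilon)$ exists, and it is enough to find \emph{one} sequence $\varepsilon_k\downarrow0$ with $\phi(\varepsilon_k)\to0$.

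To produce it, note that $f:=|\nabla v|$ belongs to $\BV(\Omega)$: recall $\Omega=B(0,1)$ and $v\in\BV^2(\Omega)$, so $\nabla v\in\BV(\Omega,\R^2)$, and $f$ is the composition of this $\BV$ vector field with the $1$-Lipschitz map $y\mapsto|y|$, whence $\|Df\|(\Omega)\le\|D^2v\|(\Omega)<\infty$. Since $f\ge0$ and $\{f\le\lambda\}=E_\lambda$ for $\lambda\ge0$, the coarea formula from Section~1 gives
$$
\int_0^{+\infty}\H^1\bigl(\Omega\cap\partial^M E_\lambda\bigr)\,\dd\lambda=\|Df\|(\Omega)<\infty .
$$
Writing $g(\lambda):=\H^1(\Omega\cap\partial^M E_\lambda)$, this shows $g\in\LL^1(0,1)$, and hence $\liminf_{\lambda\to0+}\lambda\,g(\lambda)=0$: otherwise $\lambda g(\lambda)$ would stay bounded below by a positive constant on some interval $(0,\lambda_0)$, i.e.\ $g(\lambda)\ge c/\lambda$ there, contradicting integrability. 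Choose $\varepsilon_k\downarrow0$ with $\varepsilon_k g(\varepsilon_k)\to0$.

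Now Corollary~\ref{lem9}, applied at $\varepsilon=\varepsilon_k$, gives
$$
\phi(\varepsilon_k)\le 2\varepsilon_k\bigl[g(\varepsilon_k)+\H^1(\partial\Omega)\bigr]=2\varepsilon_k g(\varepsilon_k)+2\varepsilon_k\H^1(\partial\Omega)\to0 ,
$$
since $\H^1(\partial\Omega)=\H^1(\partial B(0,1))<\infty$. Combined with the monotonicity of $\phi$ this forces $\phi(\varepsilon)\to0$ as $\varepsilon\to0+$, i.e.\ (\ref{9}). The one point that needs care is that integrability of $g$ near $0$ alone does \emph{not} give $\varepsilon g(\varepsilon)\to0$ (the function $g$ might have arbitrarily tall, thin spikes), which is precisely why the monotonicity of $\phi$ is invoked to promote the $\liminf$ to a genuine limit; the rest is a routine concatenation of the chain rule for $\BV$, the coarea formula, and Corollary~\ref{lem9}.
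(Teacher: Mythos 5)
Your proposal is correct and follows essentially the same route as the paper: the paper's proof of Corollary~\ref{cor10} is exactly ``Corollary~\ref{lem9} plus the coarea formula'' (with a pointer to Proposition~4.3 of \cite{PZ}), i.e.\ apply the coarea formula to $|\nabla v|\in\BV(\Omega)$ to get integrability of $\lambda\mapsto\H^1(\Omega\cap\partial^M E_\lambda)$, extract a sequence $\varepsilon_k$ with $\varepsilon_k\H^1(\Omega\cap\partial^M E_{\varepsilon_k})\to0$, and conclude via the monotonicity of $\varepsilon\mapsto\H^1(v(\Cl_M E_\varepsilon))$. You have simply written out the details the paper leaves implicit, including the correct observation that monotonicity is needed to upgrade the subsequential limit.
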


\begin{proof}
It follows from Lemma~\ref{lem9} and the Coarea
formula (see also the proof of Preposition~4.3 in~\cite{PZ}).
\end{proof}

Obviously the last corollary, together with Lemma~\ref{lem3.1.1}
and Corollary~\ref{Thh3.1}, imply the statement of
Theorem~\ref{Th1}.

\section{Application to the level sets of $\WW^{2,1}$ functions}

By {\em a cycle} we mean a set which is homeomorphic to the unit
circle ${\mathbb S}^1 \subset\R^2$. Now the purpose of the section is to
prove the following result.

\begin{ttt}
\label{Th2.1} {\sl Suppose $v\in \WW^{2,1}(\R^2)$. Then for almost
all $y\in\R$ the preimage $v^{-1}(y)$ is a finite disjoint family
of $\CC^1$-cycles $S_j$, $j=1,\dots,N(y)$. Moreover, the tangent
vector to each $S_j$ is an absolutely continuous function. }
\end{ttt}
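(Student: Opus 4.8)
\textbf{Proof proposal for Theorem~\ref{Th2.1}.}
The plan is to combine the Morse--Sard/Luzin~$N$ machinery already developed with the extra regularity available in the $\WW^{2,1}$ case, namely that $v$ has a representative in $\CC^1(\R^2)$ (by the Sobolev embedding on the plane one easily sees $\nabla v$ is continuous, being a $\WW^{1,1}$ vector field whose components have integrable gradients; this is where $\WW^{2,1}$ beats $\BV^2$). Granting this, the critical set is the honest set $Z_v=\{x:\nabla v(x)=0\}$, and Theorem~\ref{Th1} (applied with $\Omega$ exhausted by balls, or directly on $\R^2$) gives $\Le^1(v(Z_v))=0$. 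So the set of \emph{critical values} is Lebesgue-null, and for almost every $y\in\R$ the level $v^{-1}(y)$ contains no critical point; I would fix such a regular value $y$ from now on.

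First I would show $v^{-1}(y)$ is compact: since $v\in\LL^1$ with continuous representative and $\nabla v\in\LL^1$, $v(x)\to 0$ as $|x|\to\infty$ in a suitable averaged sense, and more carefully one checks that $\{|v|\ge|y|/2\}$ is bounded for a.e.\ $y$ — alternatively one localizes and works on a large ball, observing that for a.e.\ $y$ the level set avoids a fixed large sphere, which suffices to conclude compactness (this harmless localization is what lets us ignore behaviour at infinity). Then, at every point of $v^{-1}(y)$ we have $\nabla v\neq 0$, so by the implicit function theorem (valid since $v$ is $\CC^1$) $v^{-1}(y)$ is a $1$-dimensional embedded $\CC^1$-submanifold of $\R^2$; being compact and $1$-dimensional without boundary, it is a finite disjoint union of $\CC^1$-cycles $S_1,\dots,S_{N(y)}$. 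Finiteness of $N(y)$ is exactly compactness plus the local-coordinate description.

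It remains to upgrade the regularity of the tangent vectors from continuous to \emph{absolutely continuous}, and this is the step I expect to be the main obstacle. Near a point of $S_j$ write the curve locally as a graph $x_2=\gamma(x_1)$; the tangent direction is (up to normalization) $(1,\gamma'(x_1))=(1,-D_1v/D_2v)$ along the curve. The claim amounts to: the unit tangent field $\tau$, as a function of arclength, is absolutely continuous, equivalently its derivative (the curvature) is an $\LL^1$ function on $S_j$ with no singular part. I would prove this via the coarea-type slicing of the second derivatives: for a.e.\ $y$ the restriction of $D^2v$ to $v^{-1}(y)$ is ``absolutely continuous'' in the appropriate one-dimensional sense — concretely, since $D^2v\in\LL^1(\R^2)$ (the $\WW^{2,1}$ hypothesis, in contrast to $\BV^2$ where $D^2v$ is merely a measure with possible singular part), Fubini along the $\CC^1$-foliation $\{v^{-1}(t)\}$ near a regular value shows that for a.e.\ $y$ one has $\int_{S_j}|D^2v|\,\dd\H^1<\infty$; and the classical formula expressing curvature of the level curve as $\kappa=\frac{D_2v^2 D_{11}v-2D_1vD_2vD_{12}v+D_1v^2D_{22}v}{|\nabla v|^3}$ then exhibits $\kappa$ as a function in $\LL^1(S_j,\H^1)$ (the denominator is bounded below since $\nabla v\neq 0$ on the compact set $S_j$). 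Reparametrizing by arclength, $\tau'=\kappa\,\n$ is $\LL^1$, hence $\tau$ is absolutely continuous. The delicate points to handle carefully are: (a) justifying the coarea/Fubini slicing of $D^2v$ along a merely $\CC^1$ (not $\CC^2$) foliation — one can flatten the foliation by a $\CC^1$ change of variables and transport the $\LL^1$ bound, or argue via the coarea formula for $\WW^{1,1}$ applied to $\nabla v$; and (b) verifying that no singular part can appear, which is precisely where the $\WW^{2,1}$ (rather than $\BV^2$) assumption is essential and which parallels how Theorem~\ref{Th3.1} will only yield bounded-variation tangents in the general $\BV^2$ setting.
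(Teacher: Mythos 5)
Your argument rests on the claim that $v$ has a $\CC^1(\R^2)$ representative because $\nabla v$ is a $\WW^{1,1}(\R^2)$ field ``whose components have integrable gradients''. That embedding is false: on the plane $\WW^{1,1}$ only embeds into $\LL^2$, and a $\WW^{1,1}(\R^2)$ function need not be bounded, let alone continuous (sum suitably truncated spikes $c_k|x-x_k|^{-1/2}$ over a dense set of centres $x_k$: the result lies in $\WW^{1,1}(\R^2)$ but is unbounded on every open set). So a $\WW^{2,1}(\R^2)$ function is continuous but in general not $\CC^1$, and everything downstream of your first sentence collapses: you cannot apply the implicit function theorem at points where $\nabla v\ne0$, you cannot conclude that $v^{-1}(y)$ is an embedded $\CC^1$ submanifold, and the lower bound on $|\nabla v|$ ``on the compact set $S_j$'' (needed for the curvature denominator) is not available, since $\nabla v$ is only defined up to an $\H^1$-null set and is not continuous. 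This is exactly the obstruction the paper's $\WW^{2,1}$ section is built to overcome: Dorronsoro-type differentiability off an $\H^1$-null set $A_v$ (Lemma~\ref{Thh2.1.1}), quasicontinuity of $\nabla v$ with respect to the capacity $\Capp_1$, and a Whitney-type extension producing, for each $\varepsilon>0$, a genuine $\CC^1$ function $g$ with $v=g$, $\nabla v=\nabla g$ off an open set of capacity $<\varepsilon$ (Lemma~\ref{lem2.3}); then the Luzin $N$ property of Theorem~\ref{Thh3.3} together with $\Capp_1\sim\H^1_\infty$ (Lemma~\ref{lem2.2}) and the Morse--Sard Theorem~\ref{Th1} pushes the exceptional set forward to a set of values $y$ of small $\H^1$-measure (Corollary~\ref{cor2.4}), so that for almost every $y$ the level set is, near each of its points, a regular level set of an honest $\CC^1$ function. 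Your implicit-function-theorem picture only becomes legitimate after this substitution $v\mapsto g$, which is the main idea you are missing.

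The later steps of your plan are closer in spirit to what is actually done, but inherit the same gap. The coarea slicing giving $\int_{v^{-1}(y)}|D^2v|\,\dd\H^1<\infty$ for a.e.\ $y$ is indeed the mechanism that yields integrable curvature, and the absence of a singular part of $D^2v$ is indeed where $\WW^{2,1}$ beats $\BV^2$ (compare the estimate~(\ref{t3.3}) in the $\BV^2$ argument, where only bounded variation of the tangent survives). However, the identity $\tau'=\kappa\,\n$ with $\kappa$ expressed through $D^2v$ along the level curve requires justification when $\nabla v$ is merely $\WW^{1,1}$; in the paper this is handled through the $\CC^1$ approximation $g$ and smooth approximations with lower semicontinuity of the variation, not by a direct chain rule. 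Likewise, the uniform bounds $\delta_2<|\nabla v|<\delta_1$ along $v^{-1}(y)$ for a.e.\ $y$ come from capacity estimates and Corollary~\ref{cor10} (cf.\ Lemma~\ref{lem3.6}), not from continuity of $\nabla v$; and the finiteness of $N(y)$ together with the cycle structure is obtained from the topological lemmas of the $\BV^2$ section (Lemmas~\ref{lem3.1.4}--\ref{lem3.1.6}, which apply since $\WW^{2,1}\subset\BV^2$), rather than from the classification of compact $1$-manifolds, which is unavailable before the $\CC^1$ structure has been established.
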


Invoking extension theorems for Sobolev spaces (see, for 
example,~\cite{M}), we obtain the following:

\begin{cor}
\label{cor2.2} {\sl Suppose $\Omega\subset\R^2$ is a bounded
domain with a Lipschitz  boundary and $v\in \WW^{2,1}(\Omega)$. Then
for almost all $y\in\R$ the preimage $v^{-1}(y)$ is a finite
disjoint family of $\CC^1$-curves $\Gamma_j$, $j=1,\dots,N(y)$. Each
$\Gamma_j$ is a cycle or it is a simple arc with endpoints on
$\partial\Omega$ (in case of the latter, $\Gamma_j$ is transversal to
$\partial\Omega$). Moreover, the tangent vector to each $\Gamma_j$
is an absolutely continuous function. }
\end{cor}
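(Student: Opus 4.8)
The plan is to reduce the statement to Theorem~\ref{Th2.1} by extending $v$ to the whole plane, and then to analyse the intersection of the level sets of the extension with $\Cl\Omega$; the new ingredient is a one--dimensional Morse--Sard argument applied to the trace of $v$ on $\partial\Omega$.

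\emph{Step 1: extension and the interior picture.} Since $\Omega$ is a bounded domain with Lipschitz boundary, there is a bounded linear extension operator $\WW^{2,1}(\Omega)\to\WW^{2,1}(\R^2)$ (see~\cite{M}); I fix an extension $\tilde v\in\WW^{2,1}(\R^2)$ of $v$ and take its continuous representative, which on $\Cl\Omega$ coincides with the continuous representative of $v$, so that $v^{-1}(y)=\tilde v^{-1}(y)\cap\Cl\Omega$. Applying Theorem~\ref{Th2.1} to $\tilde v$, one gets $Y_0\subset\R$ with $\L^1(\R\setminus Y_0)=0$ such that for every $y\in Y_0$ the set $\tilde v^{-1}(y)$ is a finite disjoint union $S_1\cup\dots\cup S_{M(y)}$ of $\CC^1$--cycles with absolutely continuous tangent vectors. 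Shrinking $Y_0$ by a further $\L^1$--null set if necessary (this is built into the $\CC^1$--conclusion of Theorem~\ref{Th2.1}: for a.e. $y$ the level set avoids the exceptional sets $Z_{\tilde v}$, $K_{\tilde v}$, $A_{\tilde v}$ of Lemma~\ref{lem3.1.1}), I may assume that for $y\in Y_0$ the function $\tilde v$ is classically differentiable with $\nabla\tilde v\neq0$ at every point of $\tilde v^{-1}(y)$, and that the tangent line of the $S_j$ through such a point $x$ is $\{z:z\cdot\nabla\tilde v(x)=0\}$.

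\emph{Step 2: the trace on $\partial\Omega$.} Write $\Gamma=\partial\Omega$, a Lipschitz Jordan curve, parametrised by arclength, with unit tangent $\tau$ defined $\H^1$--a.e. Approximating $\tilde v$ in $\WW^{2,1}(\R^2)$ by functions in $\CC^\infty_0(\R^2)$ and using boundedness of the trace operator $\WW^{1,1}(\R^2)\to\LL^1(\Gamma)$ (applied both to $\tilde v$ and to $\nabla\tilde v\in\WW^{1,1}(\R^2,\R^2)$), one finds $h:=\tilde v|_\Gamma\in\WW^{1,1}(\Gamma)$ with $h'=(\nabla\tilde v)|_\Gamma\cdot\tau$. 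In particular $h$ is absolutely continuous, hence enjoys the Luzin $N$--property and satisfies $\L^1(h(B))\le\int_B|h'|\,\dd\H^1$ for Borel $B\subset\Gamma$. Consequently the set $N_1:=h(\{x:h\text{ not differentiable at }x\})\cup h(\{x:h'(x)=0\})\cup h(\{x:\Gamma\text{ not differentiable at }x\})\cup h(\Gamma\cap A_{\tilde v})$ has $\L^1(N_1)=0$.

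\emph{Step 3: structure of $v^{-1}(y)$.} Put $Y=Y_0\setminus N_1$, so $\L^1(\R\setminus Y)=0$; fix $y\in Y$. First, $h^{-1}(y)=\Gamma\cap\tilde v^{-1}(y)$ is finite: it is compact, and an accumulation point $x_0$ would satisfy $h'(x_0)=\lim_n\frac{h(t_n)-h(t_0)}{t_n-t_0}=0$ along a suitable sequence $t_n\to t_0$, contradicting $y\notin N_1$. If $x\in\Gamma\cap\tilde v^{-1}(y)$, then $\Gamma$ is differentiable at $x$, $h'(x)=\nabla\tilde v(x)\cdot\tau(x)\neq0$, and by Step~1 the tangent line at $x$ of the (unique) $S_j$ through $x$ is orthogonal to $\nabla\tilde v(x)$, hence not parallel to $\tau(x)$; thus $\tilde v^{-1}(y)$ crosses $\Gamma$ transversally at $x$. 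It follows that no $S_j$ is tangent to $\Gamma$, that an $S_j$ disjoint from $\Omega$ is disjoint from $\Gamma$ as well (otherwise it would enter $\Omega$) and contributes nothing, and that an $S_j$ meeting both $\Omega$ and $\R^2\setminus\Cl\Omega$ meets $\Gamma$ in an even number of points. For such a cycle, deleting the finite set $S_j\cap\Gamma$ splits $S_j$ into finitely many open $\CC^1$--arcs, each contained in $\Omega$ or in $\R^2\setminus\Cl\Omega$; the closures of those in $\Omega$ are simple $\CC^1$--arcs with endpoints on $\Gamma$, transversal to $\Gamma$ there, with absolutely continuous tangent vectors inherited from $S_j$. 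Together with the cycles $S_j\subset\Omega$ these arcs exhaust $v^{-1}(y)=\tilde v^{-1}(y)\cap\Cl\Omega$; they are pairwise disjoint, and their number is at most $M(y)+\tfrac12\,\#(\Gamma\cap\tilde v^{-1}(y))<\infty$. This proves the Corollary.

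\emph{Main obstacle.} The delicate points are Step~2 together with the finiteness/transversality extraction of Step~3: one must verify $\tilde v|_{\partial\Omega}\in\WW^{1,1}(\partial\Omega)$ with the correct tangential derivative and then run the elementary one--dimensional Morse--Sard property for this trace, keeping in mind that $\partial\Omega$ is only Lipschitz, so ``transversal'' must be read at points of differentiability of $\partial\Omega$. Once Theorem~\ref{Th2.1} and the Sobolev extension theorem are granted, the remainder of the argument is purely topological.
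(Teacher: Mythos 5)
Your proposal is correct and follows essentially the same route as the paper, which derives Corollary~\ref{cor2.2} from Theorem~\ref{Th2.1} simply by ``invoking extension theorems for Sobolev spaces'' and gives no further details. Your Steps 2--3 (the $\WW^{1,1}$ trace of $\tilde v$ on $\partial\Omega$, the one--dimensional Luzin/Sard argument excluding bad values, and the resulting finiteness and transversality of the boundary intersections) are exactly the omitted bookkeeping, and they are sound, up to the harmless point that $\partial\Omega$ may consist of finitely many Lipschitz Jordan curves rather than one, to each of which your argument applies verbatim.
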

Fix a function $v\in \WW^{2,1}(\R^2)$.

\begin{lem}
\label{lem1} {\sl For any $\alpha\in(0,1)$, a ball
$B(x,r)\subset\R^2$ and for any Lebesgue measurable set
$E\subset B(x,r)$ satisfying $\frac{\mathcal L^2(E)}{\mathcal L^2
(B(x,r))}\ge\alpha$ the estimate
\begin{equation}
\label{1} \sup\limits_{y\in
B(x,r)}\biggl|v(y)-v(x)-y\cdot\dashint\limits_E\nabla v(z)\,\dd z\biggr|\le
c_\alpha \|D^2v\|(B(x,r))
\end{equation}
holds, where $c_\alpha$ depends on $\alpha$ only.}
\end{lem}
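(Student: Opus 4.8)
The plan is to reduce everything to a one-dimensional Taylor-type estimate for $\BV^2$-functions combined with a Poincaré-type inequality controlling the deviation of $\nabla v$ from its average over $E$. First I would normalize: by translation assume $x=0$, and it suffices to prove the bound with a constant depending only on $\alpha$ after rescaling $r$ to $1$ (the $\BV^2$-seminorm $\|D^2v\|(B)$ is scale-invariant in the right way, and the linear term $y\cdot \dashint_E\nabla v$ scales correctly). So take $B=B(0,1)$ and $E\subset B$ with $\mathcal L^2(E)\ge \alpha\,\mathcal L^2(B)$. Denote $a=\dashint_E\nabla v(z)\,\dd z\in\R^2$.

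The key steps, in order. Step 1: mollify. Replace $v$ by $v_\delta=v*\rho_\delta$ on a slightly smaller ball; then $v_\delta$ is smooth, $\nabla v_\delta\to\nabla v$ in $L^1$, and $\|D^2v_\delta\|_{L^1(B)}\le \|D^2v\|(B(0,1))$ (plus $o(1)$). It therefore suffices to prove~(\ref{1}) for smooth $v$ with $\|D^2v\|(B)$ replaced by $\int_B|D^2v|$, and then pass to the limit (here we use the continuous representative of $v$). Step 2: for smooth $v$ and any two points $y,z\in B$, write along the segment $[z,y]$
$$
v(y)-v(z)-(y-z)\cdot\nabla v(z)=\int_0^1(1-t)\,(y-z)^{\!\top} D^2v(z+t(y-z))\,(y-z)\,\dd t,
$$
so $|v(y)-v(z)-(y-z)\cdot\nabla v(z)|\le \int_0^1|D^2v(z+t(y-z))|\,\dd t$ (using $|y-z|\le 2$). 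Step 3: integrate this inequality in $z$ over $E$ and divide by $\mathcal L^2(E)$. On the left this produces, after adding and subtracting $y\cdot a$,
$$
\Bigl|v(y)-\dashint_E v(z)\,\dd z-y\cdot a+\dashint_E z\cdot\nabla v(z)\,\dd z\Bigr|,
$$
so we also need to absorb the harmless constant $c_0:=\dashint_E(v(z)-z\cdot\nabla v(z))\,\dd z$; but the statement has $v(x)=v(0)$ in place of this average, so Step 4 is to estimate $|v(0)-c_0|$ — again by the same segment inequality applied between $0$ and points $z\in E$, giving $|v(0)-c_0|\lesssim \int_0^1\!\!\dashint_E|D^2v(t z)|\,\dd z\,\dd t$. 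Step 5: bound the right-hand sides. The term $\dashint_E\bigl(\int_0^1|D^2v(z+t(y-z))|\dd t\bigr)\dd z$ is, by Fubini and the change of variables $w=z+t(y-z)$ (whose Jacobian is $(1-t)^{-2}$, integrable against $\int_0^1(1-t)\dots$ once one is slightly careful near $t=1$ — in fact it is cleanest to keep the weight $(1-t)$ from Step 2's Taylor formula, which exactly cancels the Jacobian up to a $t$-integrable factor), controlled by $\frac{C}{\mathcal L^2(E)}\int_B|D^2v|\le \frac{C}{\alpha\,\mathcal L^2(B)}\int_B|D^2v|= c_\alpha\int_B|D^2v|$. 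The term in Step 4 is handled identically (integration along rays from $0$), with the lower bound $\mathcal L^2(E)\ge\alpha\,\mathcal L^2(B)$ again producing the factor $c_\alpha$. Collecting Steps 3--5 gives $\sup_{y\in B}|v(y)-v(0)-y\cdot a|\le c_\alpha\int_B|D^2v|$, and Step 1 upgrades this to the $\BV^2$ statement.

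The main obstacle is Step 5: the change of variables $w=z+t(y-z)$ degenerates as $t\to1$ (all $z$ map near $y$), so one must use the weight $(1-t)$ coming from the integral form of Taylor's theorem to make the Jacobian $(1-t)^{-2}$ harmless, and one must check that the image $z+t(y-z)$ stays inside $B$ for $z\in E\subset B$, $y\in B$, $t\in[0,1]$ (it does, by convexity). A secondary point is ensuring the constant depends only on $\alpha$ and not on the shape or location of $E$ within $B$ — this is exactly where the density hypothesis $\mathcal L^2(E)\ge\alpha\,\mathcal L^2(B)$ is used, namely only through the single inequality $\frac{1}{\mathcal L^2(E)}\le\frac{1}{\alpha\,\mathcal L^2(B)}$, so no finer geometry of $E$ enters.
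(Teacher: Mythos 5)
Your route is genuinely different from the paper's. The paper obtains the estimate as an immediate consequence of an abstract inequality from Maz'ya's book: $\sup_{\Omega}|u|\le c(p)\bigl(p(u)+\|D^2u\|(\Omega)\bigr)$ for every continuous seminorm $p$ on $\WW^{2,1}(\Omega)$ vanishing exactly on first--order polynomials; one takes $p_\alpha(u)=|u(0)|+\inf_{E\subset\Omega,\ \L^2(E)\ge\alpha}\bigl|\dashint_E\nabla u(z)\,\dd z\bigr|$ and $u(y)=v(y)-v(0)-y\cdot\dashint_E\nabla v(z)\,\dd z$, for which $p_\alpha(u)=0$. Your plan is instead a direct, self-contained potential--theoretic argument; such an argument can indeed be made to work in dimension two, but Step 5 as you wrote it contains a genuine error, not a technicality.

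Concretely: having discarded the factor $|y-z|^2$ in Step 2 (via $|y-z|\le2$), the quantity you must bound is $\dashint_E\int_0^1(1-t)\,|D^2v((1-t)z+ty)|\,\dd t\,\dd z$. For fixed $t$ the substitution $w=(1-t)z+ty$ has Jacobian $(1-t)^{-2}$, so the inner $z$--integral is only controlled by $(1-t)^{-2}\int_B|D^2v|$, and then the Taylor weight leaves $\int_0^1(1-t)^{-1}\,\dd t=\infty$; the weight $(1-t)$ does \emph{not} cancel the Jacobian ``up to a $t$--integrable factor'', and the logarithmic divergence is real. The cure is to keep the factor $|y-z|^2$ you threw away. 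For fixed $z$ substitute $\rho=(1-t)|z-y|$: then $(1-t)|y-z|^2\,\dd t=\rho\,\dd\rho$ and the inner integral becomes $\int_0^{|z-y|}\rho\,|D^2v(y+\rho\omega_z)|\,\dd\rho$, where $\omega_z=(z-y)/|z-y|$. Now integrate over $z\in E$ in polar coordinates centred at $y$ ($z=y+\tau\omega$, $\dd z=\tau\,\dd\tau\,\dd\omega$, $\tau\le 2$) and apply Fubini: since $\rho\,\dd\rho\,\dd\omega$ is again the area element at $y$ and all segments stay in $B$ by convexity, the whole expression is at most $\frac{2}{\L^2(E)}\int_B|D^2v|\le\frac{2}{\alpha\pi}\|D^2v\|(B)$. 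In effect this is the statement that the Riesz kernel $|y-w|^{2-n}$ is bounded precisely when $n=2$, which is where two--dimensionality enters; the same correction is needed in your Step 4. With that repair (and your Step 1 mollification, which is unproblematic here since the lemma is stated for $v\in\WW^{2,1}(\R^2)$, so $v_\delta\to v$ locally uniformly and $\int_B|D^2v_\delta|\to\int_B|D^2v|$), your argument closes and even yields an explicit constant $c_\alpha\sim\alpha^{-1}$, at the price of being longer than the paper's one--line appeal to the Maz'ya inequality.
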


\begin{proof}
Because of coordinate invariance it is sufficient to
prove the estimate for the case $\Omega=B(0,1)=B(x,r)$. By results
of \cite{M} for any $u\in \WW^{2,1}(\Omega)$ the estimate
\begin{equation}
\label{1.1} \sup\limits_{y\in \Omega}|u(y)|\le
c(p)\bigl(p(u)+\|D^2u\|(\Omega)\bigr),
\end{equation}
holds, where $p(\cdot)$ is a continuous seminorm in $\WW^{2,1}(\Omega)$
such that $p(g)=0\Leftrightarrow g=0$ for all first-order
polynomials~$g$. Take $p_\alpha(u)=|u(0)|+\inf\limits_{E\subset
\Omega,\ \L^2(E)\ge\alpha }\biggl|\dashint\limits_E
\nabla u(z)\,\dd z\biggr|$. It is easy to check that $p_\alpha$ satisfies
the above conditions. Fix a measurable set $E\subset\Omega$ with
$\L^2(E)\ge\alpha$ and take
$u(y)=v(y)-v(0)-y\cdot\dashint\limits_E\nabla v(z)\,\dd z$. Then
$p_\alpha(u)=0$ and the inequality~(\ref{1.1}) turns to the
estimate~(\ref{1}). 
\end{proof}

\noindent
For functions $v\in \WW^{2,1}(\R^2)$ the set $K_v$ from Lemma~\ref{lem3.1.1} is 
empty (see the proofs in~\cite{Dor}), so we have the following result.

\begin{lem}[see also Theorem~1 in \cite{EG}, \S{4.8}]
\label{Thh2.1.1} {\sl We can choose the representative of $\nabla v$
such that there exists a set $A_v\subset\R^2$ with with the
following properties:

(i) $\H^1(A_v)=0$;

(ii) for all $x\in \R^2\setminus A_v$
$$
\lim_{r\searrow 0}\dashint_{B(x,r)}|\nabla v(z)-\nabla v(x)|^2\,\dd z=0,
$$
$$
\sup_{y\in B(x,r)}r^{-1}|v(y)-v(x)-y\cdot
\nabla v(x)|\to0\quad\mbox{as }r\searrow 0
$$ 
(i.e., $v$ is differentiable at $x$);

(iii) for any $\varepsilon>0$ there exists an open set
$U\subset\R^2$  such that $\Capp_1(U)<\varepsilon$, $A_v\subset
U$, and $\nabla v$ is continuous on $\R^2\setminus U$. }
\end{lem}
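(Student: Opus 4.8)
The plan is to read conclusions (i) and (ii) straight off Lemma~\ref{lem3.1.1}, and to deduce (iii) from the classical quasicontinuity of Sobolev functions; the only genuinely new point is a bookkeeping one, namely arranging that a single exceptional set $A_v$ serves all three conclusions simultaneously.

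First I would fix $\nabla v$ to be the representative furnished by Lemma~\ref{lem3.1.1}. For $v\in\WW^{2,1}(\R^2)$ the component $K_v$ of that decomposition is empty (this is part of Dorronsoro's argument, as recalled just above, see~\cite{Dor}), so Lemma~\ref{lem3.1.1} collapses to a splitting $\R^2=G_v\cup A_v$ with $\H^1(A_v)=0$, and on $G_v=\R^2\setminus A_v$ the $\LL^2$--Lebesgue point identity and the pointwise differentiability recorded in (ii) hold verbatim; this gives (i) and (ii). (Alternatively one can argue directly: $\nabla v\in\WW^{1,1}(\R^2,\R^2)\hookrightarrow\LL^2(\R^2,\R^2)$ by the planar Sobolev inequality, so $\H^1$--a.e.\ point is an $\LL^2$--Lebesgue point of $\nabla v$ by the fine--property theory of Sobolev functions, and a standard covering argument shows $r^{-1}\int_{B(x,r)}|D^2v|\,\dd z\to0$ for $\H^1$--a.e.\ $x$ because $D^2v\in\LL^1(\R^2)$; feeding both facts into Lemma~\ref{lem1} with $E=B(x,r)$ yields differentiability off an $\H^1$--null set.)

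For (iii), recall that the precise representative of a $\WW^{1,1}$--function is $\Capp_1$--quasicontinuous (Evans--Gariepy, \S{4.8}, Theorem~1); since the representative from Lemma~\ref{lem3.1.1} coincides with it off the $\H^1$--null set $A_v$, it too is $\Capp_1$--quasicontinuous. Thus for $\varepsilon>0$ there is an open $U_1$ with $\Capp_1(U_1)<\varepsilon$ on whose complement $\nabla v$ is continuous. To also contain $A_v$, I would use that in the plane $\H^1(E)=0$ implies $\Capp_1(E)=0$: covering $E$ by balls $B(x_i,r_i)$ with $\sum_i r_i$ arbitrarily small and testing $\Capp_1$ against a mollification of $\min\{1,\sum_i\psi_i\}$, where $\psi_i$ equals $1$ on $B(x_i,r_i)$, is supported in $B(x_i,2r_i)$ and satisfies $|\nabla\psi_i|\le r_i^{-1}$, one obtains $\Capp_1(E)\lesssim\sum_i r_i$. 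Applying this to $A_v$ and using outer regularity of $\Capp_1$ produces an open $U_2\supset A_v$ with $\Capp_1(U_2)<\varepsilon$; then $U:=U_1\cup U_2$ is open, contains $A_v$, has $\Capp_1(U)<2\varepsilon$, and $\nabla v$ is continuous on $\R^2\setminus U\subset\R^2\setminus U_1$. Rescaling $\varepsilon$ finishes (iii).

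The step demanding care is the compatibility just invoked — that one and the same representative of $\nabla v$ can carry the Lebesgue/differentiability property off $A_v$ \emph{and} the quasicontinuity property with $A_v$ buried in a set of tiny $\Capp_1$ — which rests on the coincidence of the precise and quasicontinuous representatives of a Sobolev function and on the identification of $\Capp_1$--null with $\H^1$--null sets in $\R^2$ (this last is what lets the only-$\H^1$--small set $A_v$ be absorbed into an arbitrarily $\Capp_1$--small open set). Everything else is routine Sobolev fine--property theory.
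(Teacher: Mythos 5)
Your proposal is correct and follows essentially the same route the paper takes: conclusions (i)--(ii) are read off Lemma~\ref{lem3.1.1} using that $K_v=\emptyset$ for $\WW^{2,1}$--functions, and (iii) comes from the quasicontinuity statement of Theorem~1 in \cite[\S4.8]{EG}, with the $\H^1$--null set $A_v$ absorbed into a small--$\Capp_1$ open set via the comparability of $\Capp_1$ and $\H^1_\infty$ (the paper's Lemma~\ref{lem2.2}). Your bookkeeping of the common exceptional set and of the agreement of the chosen representative with the precise one off $A_v$ is exactly the content the paper leaves implicit.
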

Further we fix the above representative of $\nabla v$. Here (see, for example, 
\cite[\S{4.8}]{EG}) $\Capp_1$ denotes the 1-capacity defined for any $E\subset\R^2$
as
$$
\Capp_1(E)=\inf \Bigl\{ \|\nabla f\|_{L^1}: f\in L^2(\R^2),\ Df\in L^1(\R^2), f\ge1 
\mbox{ in an open neighborhood  of } E \Bigr\} .
$$
The 1-capacity has the following simple description.

\begin{lem}[see the proof of Theorem~3 in \cite{EG}, \S5.6.3]
\label{lem2.2}{\sl There is a constant $C_0>0$ such that for any
set $E\subset\R^2$ the inequalities
$$
\frac1{C_0}\H^1_\infty(E)\le\Capp_1(E)\le C_0\H^1_\infty(E)
$$
hold.}
\end{lem}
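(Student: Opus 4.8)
The plan is to prove the two inequalities separately; this is the classical description of the $W^{1,1}$-capacity by $1$-dimensional Hausdorff content in the plane, and the routine parts can be taken from \cite{EG}.

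The inequality $\Capp_1(E)\le C_0\H^1_\infty(E)$ is the easy direction. I would start from a nearly optimal cover $E\subset\bigcup_i F_i$ with $\sum_i\diam F_i$ close to $\H^1_\infty(E)$, replace each $F_i$ by a concentric ball $B_i$ of radius $r_i\sim\diam F_i$, and build a competitor $f=\sum_i\varphi_i$, where $\varphi_i$ is a Lipschitz cutoff equal to $1$ on $2B_i$, supported in $3B_i$, with $\mathrm{Lip}(\varphi_i)\le r_i^{-1}$. Then $f\in L^2(\R^2)$ and $Df\in L^1(\R^2)$ (both series converge absolutely, with $\|\nabla\varphi_i\|_{L^1}\lesssim r_i$ and $\|\varphi_i\|_{L^2}\lesssim r_i$), and $f\ge1$ on the open set $\bigcup_i 2B_i\supset E$, so $f$ is admissible and $\Capp_1(E)\le\|\nabla f\|_{L^1}\le\sum_i\|\nabla\varphi_i\|_{L^1}\lesssim\sum_i\diam F_i$. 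Taking the infimum over covers on the right gives the claim.

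For the inequality $\H^1_\infty(E)\le C_0\Capp_1(E)$ I would fix an admissible $f$, so $f\in L^2(\R^2)$, $Df\in L^1(\R^2)$ and $f\ge1$ a.e.\ on an open set $U\supset E$, and aim to show $\H^1_\infty(E)\le C\|\nabla f\|_{L^1}$. Replacing $f$ by $\min(f^+,1)$ one may assume $0\le f\le1$ and $f=1$ a.e.\ on $U$; then for every $x\in U$ and every $t\in(0,1)$ a small ball about $x$ lies $\L^2$-a.e.\ in $\{f>t\}$, so $E\subset U\subset\Int_M\{f>t\}$. By the Coarea formula $\|\nabla f\|_{L^1}=\int_0^1\H^1(\partial^M\{f>t\})\,\dd t$, so there is $t_0\in(\tfrac12,1)$ with $\H^1(\partial^M\Omega_0)\le 2\|\nabla f\|_{L^1}$, where $\Omega_0=\{f>t_0\}$; moreover $\L^2(\Omega_0)\le4\|f\|_{L^2}^2<\infty$. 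Then I would invoke the structure of planar sets of finite perimeter (see \cite{Am}, cf.\ Lemma~\ref{lem8}): up to an $\H^1$-null set, $\partial^M\Omega_0$ is a countable disjoint union of rectifiable Jordan curves $\Gamma_i$ with $\sum_i\H^1(\Gamma_i)\lesssim\H^1(\partial^M\Omega_0)$, and $\Int_M\Omega_0$ is contained, modulo an $\H^1$-null set, in the union of the closed Jordan domains $\overline{D_i}$ bounded by the $\Gamma_i$. Since $\diam\Gamma\le\tfrac12\H^1(\Gamma)$ for every closed curve $\Gamma$, this gives $\sum_i\diam\overline{D_i}\le\tfrac12\sum_i\H^1(\Gamma_i)\lesssim\H^1(\partial^M\Omega_0)\lesssim\|\nabla f\|_{L^1}$, and covering $E$ by the $\overline{D_i}$ together with the negligible exceptional set yields $\H^1_\infty(E)\lesssim\|\nabla f\|_{L^1}$; taking the infimum over $f$ completes the proof.

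The only delicate point is in this second direction. Since $f$ is merely Sobolev, the superlevel set $\{f>t_0\}$ is defined only up to an $\L^2$-null set, whereas $E$ itself may be $\L^2$-null, so one cannot expect a pointwise inclusion $E\subset\{f>t_0\}$; this is why I pass through the measure-theoretic interior, the inclusion $E\subset\Int_M\{f>t_0\}$ being insensitive to $\L^2$-null modifications, and why the geometric input must concern the essential boundary rather than the topological one. The remaining ingredient --- the decomposition of $\partial^M\Omega_0$ into rectifiable Jordan curves and the elementary bound $\diam\Gamma\le\tfrac12\H^1(\Gamma)$ --- is classical; alternatively one may first mollify $f$, pick a regular value $t_0$ via Sard's theorem, and run the same argument with genuine smooth level curves, at the cost of an additional limiting step.
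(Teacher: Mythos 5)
Your overall plan is correct, but it is worth recording that the paper itself gives no argument here: Lemma~\ref{lem2.2} is quoted from the proof of Theorem~3 in \cite[\S5.6.3]{EG}, where the lower bound for $\Capp_1$ is obtained by the coarea formula together with the relative isoperimetric inequality and a Vitali-type covering argument (a ``boxing inequality'' step that works in every dimension), while the upper bound is exactly your cutoff construction. Your proof of the hard direction replaces the isoperimetric/covering step by the two-dimensional structure theory of \cite{Am}: decompose $\Omega_0=\{f>t_0\}$ into indecomposable components $F_i$, pass to their saturations, and cover $E$ by the closed Jordan domains $\overline{D_i}$ of the exterior curves, using $\diam\Gamma\le\tfrac12\H^1(\Gamma)$ and $\sum_i\H^1(\Gamma_i)\le\sum_iP(F_i)=P(\Omega_0)$. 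This is a legitimate, genuinely planar alternative, and your handling of the genuinely delicate point (working with $\Int_M\{f>t_0\}$ and $\partial^M$ rather than pointwise level sets and topological boundaries) is exactly right.

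The one step you present as classical but which is not quotable from \cite{Am} is the inclusion ``$\Int_M\Omega_0\subset\bigcup_i\overline{D_i}$ modulo an $\H^1$-null set''. Without the null set it is simply false: take $\Omega_0$ to be a union of disjoint thin broken annuli (each with a small radial gap, so that its saturation does not fill the hole) shrinking to the origin, with gaps between consecutive annuli of width $2^{-2j}$; then the origin is a density-one point of $\Omega_0$ but lies in no $\overline{D_i}$. So the ``mod $\H^1$'' is doing real work, and proving it requires an argument (e.g.\ that at such an exceptional point the lower $1$-density of $\|D1_{\Omega_0}\|$ is positive, while $\|D1_{\Omega_0}\|=\H^1\llcorner\partial^*\Omega_0$ has zero upper $1$-density at $\H^1$-a.e.\ point off $\partial^M\Omega_0$). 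Fortunately you do not need the full claim: you only have to cover $E$, and $E\subset U$ with $U$ open and $f=1$ a.e.\ on $U$. Any ball $B\subset U$ satisfies $\L^2(B\setminus\Omega_0)=0$, hence $B$, being indecomposable, is contained modulo $\L^2$ in a single component $F_i$ of the canonical decomposition, hence in $\overline{D_i}$; since $\overline{D_i}$ is closed and $B$ is open this forces $B\subset\overline{D_i}$, so in fact $E\subset U\subset\bigcup_i\overline{D_i}$ with no exceptional set at all. With that substitution (and the trivial care needed in the easy direction for covering sets of zero diameter) your proof is complete.
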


\begin{lem}
\label{lem2.3} {\sl For any $\varepsilon>0$ there exists an open
set $U\subset\R^2$ and a function $g\in C^1(\R^2)$ such that
$\Capp_1(U)<\varepsilon$, $A_v\subset U$ and $v|_{\R^2\setminus
U}=g|_{\R^2\setminus U}$, $\nabla v|_{\R^2\setminus
U}=\nabla g|_{\R^2\setminus U}$. }
\end{lem}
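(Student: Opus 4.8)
The plan is to upgrade the conclusion of Lemma~\ref{Thh2.1.1}(iii) — where $\nabla v$ is merely \emph{continuous} on the complement of a small open set — to a genuine $\CC^1$ extension, by a Whitney-type gluing argument. First I would fix $\varepsilon>0$ and invoke Lemma~\ref{Thh2.1.1}(iii) to obtain an open set $U_0\supset A_v$ with $\Capp_1(U_0)<\varepsilon/2$ on whose complement $\nabla v$ is continuous. The key point is that on $F:=\R^2\setminus U_0$ the pair $(v,\nabla v)$ constitutes a \emph{$1$-jet} satisfying the first-order Whitney condition: by Lemma~\ref{lem1} (or directly by the differentiability property (ii) of Lemma~\ref{Thh2.1.1} together with the continuity of $\nabla v$ on $F$), for $x,y\in F$ one has
$$
|v(y)-v(x)-(y-x)\cdot\nabla v(x)|=o(|y-x|)
$$
uniformly on compact subsets, and $\nabla v$ is continuous on $F$. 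Hence the Whitney extension theorem for $\CC^1$ functions (applied on $\R^2$, with the closed set $\Cl F$) produces $g\in\CC^1(\R^2)$ with $g=v$ and $\nabla g=\nabla v$ on $\Cl F$, hence on $F=\R^2\setminus U_0$ after a harmless adjustment on $\partial U_0$.

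The remaining issue is that $g$ and $v$ agree only on $\R^2\setminus U_0$, whereas we want a set $U$ still of small capacity on whose complement both $v=g$ and $\nabla v=\nabla g$; this is immediate with $U:=U_0$, but one must be slightly careful about the boundary $\partial U_0$: replacing $U_0$ by a marginally larger open set $U$ with $U_0\subset U$, $\Capp_1(U)<\varepsilon$, and $\Cl{U_0}\subset U$ would be overkill, but shrinking is the safe route — take $U$ to be $U_0$ itself, noting that $\R^2\setminus U_0$ is closed, so that the Whitney data are prescribed on a closed set and the extension genuinely matches $v$ and $\nabla v$ there. Then $v|_{\R^2\setminus U}=g|_{\R^2\setminus U}$ and $\nabla v|_{\R^2\setminus U}=\nabla g|_{\R^2\setminus U}$ by construction, and $\Capp_1(U)<\varepsilon$, as required.

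The main obstacle is verifying the first-order Whitney compatibility condition \emph{uniformly} on the closed set $F$, rather than merely pointwise: differentiability at each point $x\in\R^2\setminus A_v$ gives $|v(y)-v(x)-(y-x)\cdot\nabla v(x)|=o(|y-x|)$ as $y\to x$, but the Whitney theorem requires this estimate with a modulus independent of $x$ when $x,y$ both range over $F$. This is exactly where Lemma~\ref{lem1} does the work: applying it to balls $B(x,r)$ with $E=B(x,r)$ (so $\alpha=1$, $c_1$ absolute) gives
$$
\sup_{y\in B(x,r)}\bigl|v(y)-v(x)-(y-x)\cdot\nabla v(x)+\,(\text{lower order})\bigr|\le c_1\|D^2v\|(B(x,r)),
$$
and the right-hand side tends to $0$ as $r\to 0$ uniformly in $x$ over compacta, because $\|D^2v\|$ is absolutely continuous with respect to $\L^2$ (as $v\in\WW^{2,1}$). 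Combined with the continuity of $\nabla v$ on $F$, this yields the uniform Whitney estimate, and the rest is a direct citation of the classical $\CC^1$ Whitney extension theorem.
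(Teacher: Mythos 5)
Your overall strategy (verify a Whitney $\CC^1$ condition on the complement of a small--capacity open set and cite the Whitney extension theorem) is the same as the paper's, but the step you yourself single out as ``the main obstacle'' is not actually closed by your argument, and this is exactly where the paper does its real work. The Whitney condition for a $\CC^1$ extension requires the remainder estimate
$|v(y)-v(x)-(y-x)\cdot\nabla v(x)|\le\omega(|x-y|)\,|x-y|$, i.e.\ a bound that is $o(|x-y|)$ uniformly for $x,y$ in the retained set. What Lemma~\ref{lem1} with $E=B(x,r)$ gives is a bound of the form $c\,\|D^2v\|(B(x,r))$ for the supremum over $y\in B(x,r)$, and absolute continuity of $\|D^2v\|$ only yields $\sup_x\|D^2v\|(B(x,r))\to0$, i.e.\ a remainder that is $o(1)$, not $o(r)$. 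To get $o(r)$ you need $\frac1r\|D^2v\|(B(x,r))\to0$ uniformly in $x$ on the set where you keep the data, and this is false on $\R^2\setminus U_0$ in general: the set where $\sup_{r\le\rho}\frac1r\|D^2v\|(B(x,r))\ge\delta$ can contain points of differentiability outside $A_v$, so it is not swallowed by the set $U_0$ provided by Lemma~\ref{Thh2.1.1}(iii). This is precisely why the paper introduces the sets $A_{\delta,\rho}=\{x:\exists r\in(0,\rho],\ \frac1r\|D^2v\|(B(x,r))\ge\delta\}$, proves $\Capp_1(A_{\delta,\rho})\to0$ as $\rho\searrow0$ by a Vitali covering (weak-type) argument using $\|D^2v\|\ll\L^2$, and removes the additional exceptional sets $A_k$ beyond $A_v$ before applying Whitney. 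Your proposal omits this excision entirely, so the uniform first-order estimate you assert on $F=\R^2\setminus U_0$ does not follow.

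A related, smaller gap is hidden in your ``(lower order)'' term: Lemma~\ref{lem1} produces the affine function with slope $\dashint_{B(x,r)}\nabla v(z)\,\dd z$, not $\nabla v(x)$, and replacing the ball average by the pointwise value costs $|y-x|\,\bigl|\dashint_{B(x,r)}\nabla v-\nabla v(x)\bigr|$. Continuity of $\nabla v$ restricted to $F$ does not control this, since the average runs over the full ball, including points of $U_0$; controlling it again requires the scaled smallness of $\frac1\rho\|D^2v\|(B(x,\rho))$ for all $\rho\le r$ (a telescoping argument over dyadic radii), i.e.\ the same $A_{\delta,\rho}$ mechanism. The paper additionally secures a uniform modulus for $\nabla v$ off a small-capacity set via the smooth approximations $f_i$ and the sets $B_i$, $F_k$ from the proof of Theorem~1 in \cite[\S4.8]{EG}, rather than relying on bare continuity. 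So the skeleton of your plan is right, but without the removal of the extra exceptional sets where $\frac1r\|D^2v\|(B(x,r))$ is not uniformly small, the appeal to the Whitney $\CC^1$ theorem is not justified.
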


\begin{proof}
Denote
$$
A_{\delta,\rho}=\{x\in \R^n:\exists r\in(0,\rho]\ \mbox{ so }
\frac1r\|D^2v\|(B(x,r))\ge\delta\}.
$$
Using Vitali's covering theorem (see~\cite{EG}) and that $\| D^{2}v \|$
is absolutely continuous with respect to $\L^2$ (recall that $v$ is $\WW^{2,1}$)
it is easy to prove that for each fixed  $\delta > 0$,
\begin{equation}
\label{l2.1}
\Capp_1(A_{\delta,\rho})\to0\mbox{ as }\rho\searrow 0.
\end{equation}
So we can choose a sequence $\rho_j>0$ such that 
\begin{equation}
\label{l2.2} \Capp_1(A_{\frac1j,\rho_j})\le\frac1{2^j}
\end{equation}
holds. Denoting
$$
A_k=\bigcup\limits_{j\ge k}A_{\frac1j,\rho_j},
$$
we have
\begin{equation}
\label{l2.3} \Capp_1(A_{k})\le\frac1{2^{k-1}};
\end{equation}
\begin{equation}
\label{l2.4} \forall k\in\N\ \forall \alpha>0\ \exists
r_{k,\alpha}>0\ \forall x\in\R^2\setminus A_k\ \forall
r\in(0,r_{k,\alpha})\quad\frac1r\|D^2v\|(B(x,r))<\alpha.
\end{equation}
It follows from the proof of Theorem~1 in \cite[\S4.8]{EG} that there exists 
a sequence of mappings $f_i\in \CC_0^\infty(\R^2,\R^2)$ such
that for the sets
\begin{equation}
\label{l2.6} B_i=\{x\in\R^n: \exists r>0\ \
\dashint_{B(x,r)}|\nabla v(y)-f_i(y)|\, \dd y>\frac1{2^i}\},
\end{equation}
$$
F_k=A_v\cap\left(\bigcup\limits_{j=k}^\infty B_j\right)
$$
we have
$$
\Capp_1 F_k\to 0\quad\mbox{ as } \quad k\to\infty,
$$
and
\begin{equation}
\label{l2.5} \forall x\in\R^2\setminus F_k\ \forall i\ge
k\quad|f_i(x)-\nabla v(x)|\le\frac1{2^i} .
\end{equation}
Take a sequence of open sets $U_k\supset F_k\cup A_k$ such that
\begin{equation}
\label{l2.7} \Capp_1 U_k\to 0\quad\mbox{ as } \quad k\to\infty.
\end{equation}
Then from above formulas~(\ref{l2.4})--(\ref{l2.5}) and
Lemma~\ref{lem1} we obtain that there exist a function
$\omega\colon (0,+\infty)\to(0,+\infty)$ such that $\omega(\delta)\to0$
as $\delta\searrow 0$ and for all $k\in\N$ and for any pair $x,y\in
\R^2\setminus U_k$ the estimates
$$
|v(x)-v(y)|\le\omega(|x-y|),
$$
$$
|\nabla v(x)-\nabla v(y)|\le\omega(|x-y|),
$$
$$
|v(y)-v(x)-(y-x)\cdot \nabla v(x)|\le\omega(|x-y|)|x-y|
$$
hold. Then the assertion of Lemma~\ref{lem2.3} follows from the
last estimates, the convergence~(\ref{l2.7}), and from the
classical Whitney extension theorem (see, for example,
\cite[Theorem~1 of \S6.5]{EG}). 
\end{proof}
Using Theorems \ref{Thh3.3}, \ref{Th1} and Lemma~\ref{lem2.2} we
can reformulate the last lemma in the following way.

\begin{cor}
\label{cor2.4} {\sl For any $\varepsilon>0$ there exist an open
set $V\subset\R$ and a function $g\in \CC^1(\R^2)$ such that
$\H^1(V)<\varepsilon$, $v(A_v)\subset V$ and
$v|_{v^{-1}(\R\setminus V)}=g|_{v^{-1}(\R\setminus V)}$,
$\nabla v|_{v^{-1}(\R\setminus V)}=\nabla g|_{v^{-1}(\R\setminus V)}\ne0$. }
\end{cor}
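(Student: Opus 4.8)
The plan is to transfer the exceptional set produced by Lemma~\ref{lem2.3} from the source space $\R^2$ to the target line $\R$, using the Luzin $N$--property of Theorem~\ref{Thh3.3} together with the Morse--Sard statement of Theorem~\ref{Th1}. If $U\subset\R^2$ is the open set supplied by Lemma~\ref{lem2.3}, then its image $v(U)$ should essentially play the role of the desired set $V$, after enlarging it slightly so that it becomes open and also absorbs the (negligible) set of points at which the chosen precise representative of $\nabla v$ vanishes.

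First I would fix $\varepsilon>0$ and apply Lemma~\ref{lem2.3} with a small parameter $\varepsilon_1>0$ (to be specified at the end), obtaining an open set $U\subset\R^2$ and $g\in\CC^1(\R^2)$ with $\Capp_1(U)<\varepsilon_1$, $A_v\subset U$, and $v=g$, $\nabla v=\nabla g$ on $\R^2\setminus U$. By Lemma~\ref{lem2.2} this gives $\H^1_\infty(U)\le C_0\varepsilon_1$, and then Theorem~\ref{Thh3.3} (applicable since $\WW^{2,1}(\R^2)\subset\BV_2(\R^2)$) yields $\H^1(v(U))<\varepsilon/2$ provided $\varepsilon_1$ was chosen small enough. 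Next I would control the critical points: let $N=\{x\in\R^2:\nabla v(x)=0\}$ for the chosen representative of $\nabla v$. Since $\H^1(A_v)=0$ and $v$ is differentiable with $v'=\nabla v$ on $\R^2\setminus A_v$ (Lemma~\ref{Thh2.1.1}), every point of $N\setminus A_v$ lies in $Z_{1v}$, so $N\subset A_v\cup Z_v$; hence Corollary~\ref{Thh3.1} and Theorem~\ref{Th1} give $\H^1(v(N))=0$. Finally, by outer regularity of Lebesgue measure on $\R$, I would pick open sets $V_1\supset v(U)$ with $\H^1(V_1)<\varepsilon/2$ and $V_2\supset v(N)$ with $\H^1(V_2)<\varepsilon/2$ (the latter possible because $v(N)$ is $\H^1$--null), and set $V=V_1\cup V_2$; then $V$ is open, $\H^1(V)<\varepsilon$, and $v(A_v)\subset v(U)\subset V$.

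It then remains only to verify the conclusion on $v^{-1}(\R\setminus V)$: if $v(x)\notin V$, then $x\notin U$ (otherwise $v(x)\in v(U)\subset V$), so $v(x)=g(x)$ and $\nabla v(x)=\nabla g(x)$; and $x\notin N$ (otherwise $v(x)\in v(N)\subset V$), so $\nabla g(x)=\nabla v(x)\ne 0$. This yields all the required identities together with the non--vanishing of $\nabla g$ on $v^{-1}(\R\setminus V)$. The only genuinely delicate point is the third step above: one must be certain that every point at which the fixed precise representative of $\nabla v$ vanishes is already accounted for by the critical set $Z_v$ of Theorem~\ref{Th1} — this is precisely where the facts that $K_v=\emptyset$ for $\WW^{2,1}$--functions and that $\H^1(A_v)=0$ are used — so that Theorems~\ref{Th1} and~\ref{Thh3.3} together make $v(U)\cup v(N)$ of arbitrarily small $\H^1$--measure; the remaining covering and containment arguments are routine.
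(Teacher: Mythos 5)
Your proposal is correct and follows essentially the route the paper intends: the paper proves Corollary~\ref{cor2.4} by exactly this combination of Lemma~\ref{lem2.3} (the set $U$ and the $\CC^1$ extension $g$), Lemma~\ref{lem2.2} to convert the capacity bound into an $\H^1_\infty$ bound, Theorem~\ref{Thh3.3} to make $\H^1(v(U))$ small, and Theorem~\ref{Th1} (with Corollary~\ref{Thh3.1}) to absorb the image of the zero set of $\nabla v$ into $V$. Your filling in of the details -- the ordering of the choice of $\delta$ and $\varepsilon_1$, the inclusion $N\subset A_v\cup Z_{1v}$, and the passage to an open $V$ by outer regularity -- is sound and matches the paper's (only sketched) argument.
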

The last corollary and Lemma~\ref{Thh2.1.1} easily imply the
statement of Theorem~\ref{Th1}.

\section{Application to the level sets of $\BV^2$ functions}

The main goal of this section is to prove the following result.

\begin{ttt}
\label{Th3.1} {\sl Suppose $v\in \BV_2(\R^2)$. Then for almost all
$y\in\R$ the preimage $v^{-1}(y)\cap\Omega$ is a finite disjoint
family of  cycles $S_j$, $j=1,\dots,N(y)$.
Moreover, the variation
of the tangent vector to each $S_j$
(i.e., the integral curvature of $\Gamma_j$) is finite.}
\end{ttt}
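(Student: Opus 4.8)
The plan is to combine the Sard-type result of Theorem~\ref{Th1} with the coarea formula and classical structure theory of level sets, exactly as in the $\WW^{2,1}$ case but keeping track only of the weaker $\BV$ regularity of the tangent vector. First I would dispose of the values $y$ we want to exclude. By Theorem~\ref{Th1}, $\H^1\bigl(v(Z_v)\bigr)=0$, so almost every $y\in\R$ is such that the level set $v^{-1}(y)$ avoids the critical set $Z_v$; in particular every point of such a level set is either a point of differentiability of $v$ with $\nabla v\neq 0$, or a point of $K_v$ at which at least one of the half-space differentials $\lambda(x),\mu(x)$ is nonzero. Second, I would invoke the coarea formula
$$
\|D\nabla v\|(\R^2)=\int_{-\infty}^{+\infty}\|D_i v\|\bigl(\partial^M\{D_i v\le\lambda\}\bigr)\,\dd\lambda<\infty
$$
for $i=1,2$ (more precisely, the coarea formula applied to $v$ itself together with the fact that $\nabla v\in\BV(\R^2,\R^2)$), to conclude that for almost every $y$ the level set $v^{-1}(y)$ meets the ``good'' set $G_v\cup K_v$ of Lemma~\ref{lem3.1.1} in a set on which the relevant derivative data is finite; combining with the first step, for a.e.\ $y$ the level set is $\H^1$-a.e.\ contained in $G_v\setminus Z_v$ or crosses $K_v$ only along the $\CC^1$-curves $L_i$, and the integral curvature (total variation of the tangent) is controlled by $\|D^2 v\|$ restricted to a neighbourhood, which is finite for a.e.\ $y$ by yet another application of the coarea formula.

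Third, I would establish the local structure. Near a point $x_0\in v^{-1}(y)\cap G_v$ with $\nabla v(x_0)\neq 0$, differentiability of $v$ at $x_0$ (property (iii) of Lemma~\ref{lem3.1.1}) gives that $v^{-1}(y)$ is locally a Lipschitz graph through $x_0$; to upgrade to ``the tangent vector has bounded variation'' I would use that the normal direction along the level set is $\nabla v/|\nabla v|$ and that $\nabla v\in\BV$, so along the (rectifiable, finite-length) level curve the unit normal — hence the unit tangent — is a $\BV$ function of arc length. Near a point $x_0\in K_v$ the level set is locally contained in the $\CC^1$-curve $L_i$ (its tangent is then even continuous there), which is certainly compatible with the $\BV$ conclusion. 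Patching these local descriptions together over the compact set $v^{-1}(y)$ (which is compact since $v\in\BV_2(\R^2)$ has compactly-behaving level sets away from infinity — here I would use that $v\in\LL^1$, so $v^{-1}(y)$ for a.e.\ $y$ is bounded), and using that distinct level curves cannot accumulate because of the uniform lower bound on $|\nabla v|$ available after removing a set of $y$'s of measure zero (via Corollary~\ref{cor10} and Theorem~\ref{Th1}), I get that $v^{-1}(y)$ is a \emph{finite} disjoint union of closed connected rectifiable curves, each of which is a cycle (it has no endpoints, being a level set of a function that is locally a Lipschitz graph or lies on a $\CC^1$-curve at every one of its points).

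Finally, I would assemble these: for a.e.\ $y$, $v^{-1}(y)$ is a finite disjoint family of Lipschitz cycles $S_j$, and on each $S_j$ the unit tangent, being (up to rotation by $\pi/2$) the trace of the $\BV$-vector field $\nabla v/|\nabla v|$ along a Lipschitz curve of finite length over which $|\nabla v|$ is bounded away from $0$, is a function of bounded variation; its variation is the integral curvature of $S_j$ and is bounded by $C\int_{\Gamma_j}|\nabla v|^{-1}\,\dd\|D^2 v\|$, which is finite for a.e.\ $y$ by the coarea formula. The main obstacle I anticipate is the third step — making rigorous that the trace of $\nabla v$ along the level curve is genuinely $\BV$ in arc length (the abstract $\BV$ property of $\nabla v$ on $\R^2$ does not automatically restrict to a curve), and quantifying its variation by $\|D^2 v\|$; this is where the finiteness of the integral curvature really has to be extracted, presumably by a covering argument with intervals $I$ and the estimate of Lemma~\ref{lb1}, or by a Fubini-type slicing of the measure $D^2 v$ transversal to the level sets.
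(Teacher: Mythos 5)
There are two genuine gaps, both at the places where the $\BV^2$ (rather than $\CC^1$) regularity bites. First, your local structure step does not work as stated: pointwise differentiability of $v$ at a single point $x_0\in G_v$ with $\nabla v(x_0)\neq 0$ only confines $v^{-1}(y)$ near $x_0$ to arbitrarily narrow cones \emph{at $x_0$}; since $\nabla v$ is merely $\BV$ and not continuous, there is no uniform cone condition at the neighbouring level-set points, hence no implicit-function-type conclusion that the level set is locally a Lipschitz graph -- and even containment in a graph would not give connectedness (a priori the level set could be a totally disconnected subset of such a graph), so ``no endpoints, hence a cycle'' does not follow. The paper obtains the cycle structure by a purely topological argument: strict monotonicity of $v$ on a transversal segment together with $\H^1(v^{-1}(y))<\infty$, Lemma~\ref{lemt3} and Lemma~\ref{lem3.1.3} produce an arc through every point of the level set (Lemma~\ref{lem3.1.4}); maximal arcs then close up into cycles (Lemma~\ref{lem3.1.5}); and Kronrod's theorem \cite{Kr} is invoked to discard the at most countably many values $y$ for which some component is not a cycle (Corollary~\ref{lem3.1.6}). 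Nothing in your sketch replaces this.

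Second, the finiteness of $N(y)$ and the bound on the variation of the tangent -- which you yourself flag as the main obstacle -- are exactly what is missing. The claim that components ``cannot accumulate because of the uniform lower bound on $|\nabla v|$'' is unsubstantiated without continuity of $\nabla v$ (and note that the bound $\delta_2<|\nabla v|<\delta_1$ of Lemma~\ref{lem3.6} is available only after removing a set of values of measure $\varepsilon$, not measure zero). The paper's mechanism is different: approximate $v$ by smooth $v_i$, apply the coarea formula to $\int_{v^{-1}(F_\varepsilon)}|\nabla v|\,|D^2v_i|$ and the upper bound $|\nabla v|\le\delta_1$ to get $\int_{F_\varepsilon}\sum_j\Var(\nabla v_i,S_j(y))\,\dd y\le C$, pass to the limit by lower semicontinuity of the variation on each fixed cycle (using pointwise convergence of $\nabla v_i$ on $G_v$ and countability of $v^{-1}(y)\cap K_v$) and Fatou, and then convert $\Var(\nabla v,S_j(y))$ into $\Var(\tau,S_j(y))$ via the two-sided gradient bounds. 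Crucially, the finiteness of $N(y)$ comes from the elementary observation that every cycle has turning at least $2\pi$, so an integrable-in-$y$ total turning forces finitely many cycles for a.e.\ $y$; your proposal contains no substitute for this $2\pi$ argument. Your closing guess (``Fubini-type slicing of $D^2v$ transversal to the level sets'') points in the right direction, but since $\|D^2v\|$ is only a measure its restriction to a level curve is not defined directly, so the smooth approximation plus semicontinuity step of the paper (or something equivalent) is genuinely needed rather than optional.
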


\begin{cor}
\label{cor3.2} {\sl Suppose $\Omega\subset\R^2$ is a bounded
domain with a Lipschitz  boundary and $v\in \BV^{2}(\Omega)$. Then
for almost all $y\in\R$ the preimage $v^{-1}(y)$ is a finite
disjoint family of Lipschitz curves $\Gamma_j$, $j=1,\dots,N(y)$.
Each $\Gamma_j$ is a cycle or it is a simple arc with endpoints on
$\partial\Omega$ (in the last case $\Gamma_j$ is transversal to
$\partial\Omega$). Moreover, the variation of the tangent vector
to $\Gamma_j$ (i.e., the integral curvature of $\Gamma_j$) is
finite. }
\end{cor}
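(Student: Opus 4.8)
The plan is to derive Corollary~\ref{cor3.2} from Theorem~\ref{Th3.1} exactly as Corollary~\ref{cor2.2} is derived from Theorem~\ref{Th2.1}, namely by an extension argument. First I would invoke the Sobolev/$\BV^2$ extension theorem for bounded Lipschitz domains (see~\cite{M}): since $\partial\Omega$ is Lipschitz, there is a bounded linear extension operator $\WW$-type map producing $\tilde v\in \BV_2(\R^2)$ with $\tilde v|_\Omega = v$ and $\supp\tilde v$ bounded. Applying Theorem~\ref{Th3.1} to $\tilde v$ gives a set $N\subset\R$ of full measure such that for every $y\notin N$ the level set $\tilde v^{-1}(y)$ is a finite disjoint union of cycles $S_j$ each with tangent vector of bounded variation, i.e. of finite integral curvature.

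Next I would intersect with $\Omega$. For $y\notin N$ the set $v^{-1}(y) = \tilde v^{-1}(y)\cap\Omega$ is obtained from a finite family of Lipschitz cycles by intersecting with the open set $\Omega$; each $S_j\cap\Omega$ is a finite disjoint union of relatively open subarcs of $S_j$ (finiteness because $S_j$ has finite length and $\partial\Omega$ is Lipschitz, so by a further restriction of $N$ — using the coarea formula applied to $\tilde v$ restricted to a neighborhood of $\partial\Omega$ together with the fact that $\partial\Omega$ carries no $\L^2$-mass — one excludes the null set of values $y$ for which $\tilde v^{-1}(y)$ meets $\partial\Omega$ non-transversally or in a set of positive $\H^1$-measure). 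Thus each component $\Gamma_j$ of $v^{-1}(y)$ is either an entire cycle $S_j\subset\Omega$, or a simple Lipschitz arc whose endpoints lie on $\partial\Omega$ and which meets $\partial\Omega$ transversally; in either case $\Gamma_j$ is a sub-arc (or all) of some $S_i$, hence inherits a tangent vector of bounded variation — the integral curvature of $\Gamma_j$ is at most that of $S_i$, hence finite.

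I would organize this as: (1) extend $v$ to $\tilde v\in\BV_2(\R^2)$; (2) apply Theorem~\ref{Th3.1} to get the good set of values and the cycle structure of $\tilde v^{-1}(y)$; (3) via coarea applied near $\partial\Omega$, discard a further null set of $y$ to ensure transversality of $\tilde v^{-1}(y)$ with $\partial\Omega$ and finiteness of the intersection; (4) conclude that $v^{-1}(y)=\tilde v^{-1}(y)\cap\Omega$ decomposes into finitely many cycles and transversal simple arcs, each a piece of some $S_j$, hence Lipschitz with tangent of bounded variation.

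The main obstacle is step (3): guaranteeing for a.e.\ $y$ that the level set $\tilde v^{-1}(y)$ crosses $\partial\Omega$ transversally and in only finitely many points, so that $v^{-1}(y)$ genuinely consists of \emph{finitely many} arcs with endpoints on $\partial\Omega$. Since $\partial\Omega$ is only Lipschitz (not $\CC^1$), "transversal" must be interpreted in a suitable generalized (a.e.) sense, and the argument must combine the finite-length/finite-curvature control on $S_j$ from Theorem~\ref{Th3.1} with a Fubini/coarea argument showing that for a.e.\ $y$ the compact set $\tilde v^{-1}(y)\cap\partial\Omega$ is finite; the Lipschitz regularity of $\partial\Omega$ and the rectifiability of the $S_j$ are what make this work, but the bookkeeping is the delicate part. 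Everything else is a routine transcription of the $\WW^{2,1}$ argument behind Corollary~\ref{cor2.2}.
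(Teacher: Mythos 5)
Your proposal matches the paper's (implicit) proof: Corollary~\ref{cor3.2} is obtained from Theorem~\ref{Th3.1} by extending $v$ to a $\BV_2(\R^2)$ function via the extension theorems for Lipschitz domains cited from \cite{M}, exactly as Corollary~\ref{cor2.2} is obtained from Theorem~\ref{Th2.1}, and then restricting the level sets to $\Omega$. The boundary bookkeeping you flag in step (3) (transversality and finiteness of $\tilde v^{-1}(y)\cap\partial\Omega$ for a.e.\ $y$) is not carried out in the paper at all, so your outline is, if anything, more detailed than the paper's own treatment.
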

Curves of this kind are called {\em curves of finite turn} and they
have been systematically studied in~\cite{AR} and \cite{Re1}.

\noindent
Fix a function $v\in \BV^{2}(\R^2)$.
Let $A_v,\ K_v,\ \mu(x),\ \lambda(x),\ \nu(x)$ be objects defined in 
Lemma~\ref{lem3.1.1}.

\begin{lem}
\label{cor3.1.2} {\sl For almost all $y\in v(\R^2)$ the following
assertions are true:

(i) $v^{-1}(y)\cap A_v=\emptyset$;

(ii) for all $x\in v^{-1}(y)$ \ $\lambda(x)\ne0\ne\mu(x)$;

(iii) for all $x\in v^{-1}(y)\cap K_v$ both vectors $\lambda(x)$,
$\mu(x)$ are not parallel to $\nu(x)$;

(iv) the intersection $v^{-1}(y)\cap K_v$ is at most countable;

(v) $\H^1(v^{-1}(y))<\infty$. }
\end{lem}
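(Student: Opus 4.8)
The strategy is to prove each of the five assertions holds for \emph{all but an $\L^1$-null set} of values $y \in v(\R^2)$, and then intersect the five exceptional sets. Property~(i) is immediate from Corollary~\ref{Thh3.1}: since $\H^1(A_v)=0$ by Lemma~\ref{lem3.1.1}(i), we have $\L^1(v(A_v))=0$, and for $y \notin v(A_v)$ the fiber $v^{-1}(y)$ misses $A_v$. Property~(v) follows from the Coarea formula for $\BV$: writing $\|Dv_i\|(\R^2) = \int_\R \H^1(\partial^M\{D_iv \le \lambda\})\,\dd\lambda < \infty$ is not quite what is needed, but rather one applies the Coarea formula to $v$ itself in the form that $\int_\R \H^1(v^{-1}(y))\,\dd y$ is controlled by $\|\nabla v\|_{\L^1}$ (using that the continuous representative of a $\BV^2$-function is, fiberwise, comparable to the $\BV$-coarea measure on level sets via Lemma~\ref{lb1} applied on a grid of intervals); since the integral is finite, the integrand is finite for a.e.~$y$.

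For properties~(ii) and~(iii) the plan is to reduce everything to the Luzin $N$-property of Theorem~\ref{Thh3.3} / Corollary~\ref{Thh3.1}. For~(ii), observe that the set where $\lambda(x)=0$ or $\mu(x)=0$ is contained in the critical set $Z_v$ as defined in Section~3 (this is exactly the inclusion remarked after Lemma~\ref{lem3.1.1}), and by Theorem~\ref{Th1} we have $\H^1(v(Z_v))=0$; hence for a.e.~$y$ no point of $v^{-1}(y)$ has a vanishing half-space differential. For~(iii), let $P \subset K_v$ be the set of $x$ where $\lambda(x)$ or $\mu(x)$ is parallel to $\nu(x)$. On each piece $K_i \subset L_i$ (a $\CC^1$-curve with $\nu$ normal to $L_i$), the condition ``$\lambda(x)\parallel\nu(x)$'' says the one-sided differential is normal to the curve $L_i$, so the restriction of $v$ to $L_i$ has one-sided derivative zero at $x$ in the direction along $L_i$. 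The key point is that on $K_i$ the function $v$ restricted to $L_i$ is (one-sidedly) differentiable with derivative given by the tangential component of $\lambda$ (resp.~$\mu$); since $K_v$ is $\H^1$-rectifiable and $v$ is continuous, one shows $\H^1(P) $ need not be zero, but $v(P)$ has $\L^1$-measure zero by a one-dimensional Sard argument along each $\CC^1$-curve $L_i$ (the tangential derivative of $v|_{L_i}$ is a $\BV$ function of one variable by the trace theory of $\BV^2$, so its zero set maps to a null set). Again intersecting over the countably many $K_i$ and removing $v(P)$ handles~(iii).

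Property~(iv) is the one I expect to be the main obstacle, because it is not a measure-zero statement about a fixed bad set but a \emph{countability} statement about a generic fiber. The plan is: $K_v = \bigcup_i K_i$ with $K_i$ compact in a $\CC^1$-curve $L_i$. Parametrize $L_i$ by arclength, so $v|_{L_i}$ becomes a continuous function $\varphi_i$ of one real variable whose derivative (where it exists) is the tangential component of $\lambda$ or $\mu$, and which is of bounded variation (this uses the $\BV^2$ trace onto the $\CC^1$-curve, as in Lemma~\ref{lem3.1.1}). A continuous $\BV$-function $\varphi_i$ on an interval has the property that for a.e.~value $y$, the level set $\varphi_i^{-1}(y)$ is finite --- indeed by the Banach indicatrix / Coarea formula $\int_\R \#\varphi_i^{-1}(y)\,\dd y = \Var(\varphi_i) < \infty$, so $\#\varphi_i^{-1}(y) < \infty$ for a.e.~$y$, in particular at most countable. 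Intersecting over all $i$ (countably many) gives a full-measure set of $y$ for which $v^{-1}(y)\cap K_i$ is finite for every $i$, hence $v^{-1}(y)\cap K_v$ is at most countable. The subtlety to be careful about is that ``a.e.~$y$'' for each $i$ must be assembled into a single ``a.e.~$y$'', which is fine since the union is countable; and one must check the trace of $\nabla v$ onto $L_i$ really is $\BV$ in one variable, which is precisely the content of the Dorronsoro-type Lemma~\ref{lem3.1.1}(ii),(iv). Finally, the conclusion of the lemma is the intersection of the five full-measure sets of values, which is again full-measure.
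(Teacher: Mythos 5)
Your treatment of (i), (ii), (iii) and (v) is essentially the paper's: (i) from Corollary~\ref{Thh3.1} applied to $A_v$, (ii) from Theorem~\ref{Th1} via the inclusion of $\{\lambda=0\}\cup\{\mu=0\}$ in $Z_v$, (iii) from a one--dimensional Sard argument for $v|_{L_i}$ using the differentiability statements (ii), (iv) of Lemma~\ref{lem3.1.1}, and (v) from a coarea--type bound on $\int\H^1(v^{-1}(y))\,\dd y$ (your grid version of Lemma~\ref{lb1} is a legitimate variant of the paper's appeal to the coarea formula). The genuine divergence is (iv). The paper deduces (iv) directly from (iii): if neither $\lambda(x)$ nor $\mu(x)$ is parallel to $\nu(x)$, then the one--sided tangential derivatives of $v$ along $L_i$ at $x$ are nonzero, so by the expansions in Lemma~\ref{lem3.1.1}(iv) the point $x$ is isolated in $v^{-1}(y)\cap L_i$; a subset of $\R^2$ all of whose points are isolated is at most countable, and the union over the countably many $K_i$ finishes the argument. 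No information about the global behaviour of $v|_{L_i}$ is needed.

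Your route via the Banach indicatrix is heavier and, as written, has a justification gap: you claim that $v|_{L_i}$ is a continuous function of bounded variation in one variable and attribute this to Lemma~\ref{lem3.1.1}(ii),(iv), but that lemma only provides \emph{pointwise} (one--sided) differentiability at $\H^1$--a.e.\ point, which does not imply bounded variation of the restriction. The fact you need is true, but it requires a separate argument, e.g.\ mollify $v$, bound $\Var(v_\varepsilon|_{L_i})\le\int_{L_i}|\nabla v_\varepsilon|\,\dd\H^1\le C\|\nabla v\|_{\BV}$ by the $\BV$ trace theorem on a compact $\CC^1$ curve, and pass to the limit using locally uniform convergence and lower semicontinuity of the variation. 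With that inserted, your argument gives the (slightly stronger) conclusion that $v^{-1}(y)\cap K_i$ is finite for a.e.\ $y$, but the simpler deduction of (iv) from (iii) is what the paper intends, and it avoids any trace theory. Also note a small slip in (i)/(ii): Corollary~\ref{Thh3.1} and Theorem~\ref{Th1} give $\H^1$--null (hence $\L^1$--null) images, which is what you use, so the conclusion stands, but the measure in the statement of those results is $\H^1$ on the target, not $\L^1$ of a planar set as your phrase ``$\L^1(v(A_v))=0$'' might suggest.
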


\begin{proof}
(i) follows from Theorem~\ref{Thh3.1}.

(ii) follows from Theorem~\ref{Th1}.

(iii) follows from the classical one dimension version of the Sard
theorem applied to the restriction $v|_{L_i}$ (see the assertion
(ii), (iv) of Lemma~\ref{lem3.1.1} );

(iv) follows from (iii);

(v) follows from the Coarea formula. 
\end{proof}
By {\it connectedness\/} (without additional terms) we mean connectedness in 
the sense of general topology.

\begin{lem}[see, for example, Lemma 2.2 in \cite{K07}]
\label{lemt3} {\sl Let $\Omega\subset\R^2$ be a domain that is homeomorphic to the 
unit disc and let $G\subset\Omega$ be a subdomain of~$\Omega$. Then for each connected
component~$\Omega_i$ of the open set $\Omega\setminus\Cl G$ the
intersection~$\Omega\cap\partial\Omega_i$ is ~connected.} 
\end{lem}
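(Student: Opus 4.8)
The idea is to reduce the statement to the \emph{unicoherence} of the plane. Since $\Omega$ is homeomorphic to the open disc, hence to $\R^2$, it is unicoherent: whenever $\Omega=A\cup B$ with $A,B$ closed in $\Omega$ and connected, the intersection $A\cap B$ is connected. I will apply this to a pair $A,B$ whose intersection turns out to be exactly $\Omega\cap\partial\Omega_i$. Writing $\Cl_\Omega E:=\Omega\cap\Cl E$ for closure relative to $\Omega$, I set
$$
A:=\Cl_\Omega\Omega_i,\qquad B:=\Cl_\Omega\bigl(\Omega\setminus\Cl_\Omega\Omega_i\bigr).
$$
Then $A$ and $B$ are closed in $\Omega$ with $A\cup B=\Omega$, and $A$ is connected, being the closure of the connected set $\Omega_i$.

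Next I would verify that $A\cap B=\Cl_\Omega\Omega_i\setminus\Omega_i=\Omega\cap\partial\Omega_i$. The inclusion $A\cap B\subseteq\Omega\cap\partial\Omega_i$ is immediate: $\Omega_i$ is open, so no point of $\Omega_i$ is a limit of points of $\Omega\setminus\Cl_\Omega\Omega_i$. For the reverse inclusion one uses the partition $\Omega=G\sqcup(\Omega\cap\partial G)\sqcup\bigsqcup_j\Omega_j$ into $G$, its topological frontier inside $\Omega$, and the components of $\Omega\setminus\Cl G$. Any $x\in\Omega\cap\partial\Omega_i$ can lie neither in $G$ nor in any $\Omega_j$ (each is open and either misses $\Omega_i$ or equals $\Omega_i$), so $x\in\Omega\cap\partial G\subseteq\Cl G$; since $G\cap\Cl_\Omega\Omega_i=\emptyset$, the point $x$ is approximated by points of $G\subseteq\Omega\setminus\Cl_\Omega\Omega_i$, hence $x\in B$ (and $x\in A$ trivially).

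The crux, and where I expect the main difficulty, is showing that $B$ is connected; this is exactly where the hypothesis that $G$ is a \emph{domain} (not merely open) is indispensable. Using the same partition, $\Omega\setminus\Cl_\Omega\Omega_i$ lies between $G\cup\bigcup_{j\ne i}\Omega_j$ and $\Cl_\Omega G\cup\bigcup_{j\ne i}\Cl_\Omega\Omega_j$, and both of these have the same $\Omega$-closure, namely $B$. Now $\Cl_\Omega G$ is connected because $G$ is; and for each $j\ne i$, $\Cl_\Omega\Omega_j$ is connected and meets $\Cl_\Omega G$ — indeed $\Omega_j\subsetneq\Omega$ with $\Omega$ connected forces $\Omega_j$ to be non-closed in $\Omega$, so $\Omega\cap\partial\Omega_j\ne\emptyset$, and every point of $\Omega\cap\partial\Omega_j$ lies in $\Cl G$ because it cannot belong to the open set $\Omega\setminus\Cl G$. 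Thus $\Cl_\Omega G\cup\bigcup_{j\ne i}\Cl_\Omega\Omega_j$ is a union of connected sets all meeting the connected set $\Cl_\Omega G$, hence connected, and so is its closure $B$.

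Finally, with $A,B$ closed connected and $A\cup B=\Omega$, unicoherence of $\Omega$ yields that $A\cap B=\Omega\cap\partial\Omega_i$ is connected, which is the assertion. The only external input is the unicoherence of $\R^2$, a classical fact; alternatively one could proceed via the Jordan curve theorem, but the unicoherence route minimizes the point-set combinatorics.
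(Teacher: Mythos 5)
Your argument is correct. Note first that the paper itself offers no proof of this lemma: it is quoted with a reference to Lemma~2.2 of \cite{K07}, so there is no in-text argument to compare against; your proposal supplies a genuine, essentially self-contained proof where the paper relies on a citation. The reduction to unicoherence is sound: $\Omega$, being homeomorphic to the open disc and hence to $\R^2$, is unicoherent (this is the only external input, and it is exactly where the hypothesis ``homeomorphic to the unit disc'' enters --- the statement fails, e.g., in an annulus). Your identification $A\cap B=\Cl_\Omega\Omega_i\setminus\Omega_i=\Omega\cap\partial\Omega_i$ is verified correctly via the partition $\Omega=G\sqcup(\Omega\cap\partial G)\sqcup\bigsqcup_j\Omega_j$, and the connectedness of $B$ is the right place to spend the effort: the sandwich $G\cup\bigcup_{j\ne i}\Omega_j\subseteq\Omega\setminus\Cl_\Omega\Omega_i\subseteq\Cl_\Omega G\cup\bigcup_{j\ne i}\Cl_\Omega\Omega_j\subseteq\Cl_\Omega\bigl(G\cup\bigcup_{j\ne i}\Omega_j\bigr)$ shows all three sets have the same closure $B$ in $\Omega$, which neatly sidesteps the fact that the closure of an infinite union need not be the union of the closures (it would be worth stating that last inclusion explicitly, since it is what makes ``the same $\Omega$-closure'' legitimate). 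The observation that each $\Omega_j$, $j\ne i$, is a proper nonempty open subset of the connected set $\Omega$, hence not closed in $\Omega$, so that $\emptyset\ne\Omega\cap\partial\Omega_j\subseteq\Cl_\Omega G$, correctly glues all pieces of $B$ to the connected set $\Cl_\Omega G$; this is also where the hypothesis that $G$ is connected is used, as you point out. With $A,B$ closed, connected, and covering $\Omega$, unicoherence gives the claim. In short: correct, and a reasonable alternative to chasing down the cited source; the citation route buys brevity, your route buys self-containedness at the price of invoking the classical unicoherence theorem for the plane.
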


\begin{lem}[see, for example, \cite{Am}]
\label{lem3.1.3} {\sl Suppose $K$ is a compact connected set in
$\R^2$ and $\H^1(K)<\infty$. Then $K$ is arcwise connected. }
\end{lem}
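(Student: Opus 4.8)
The plan is to reduce the statement to two facts: that such a $K$ is \emph{locally connected}, hence a Peano continuum, and the classical theorem that every Peano continuum is arcwise connected. Granting these, fix two distinct points $x,y\in K$. Among the subcontinua of $K$ that contain both $x$ and $y$, ordered by inclusion, every chain has a lower bound --- namely the intersection of its members, which is again a subcontinuum containing $x$ and $y$, since a totally ordered family of subcontinua of a compact space has connected intersection; so Zorn's lemma yields a minimal such subcontinuum $\Gamma$, i.e.\ $\Gamma$ is \emph{irreducible} between $x$ and $y$. Since $\H^1(\Gamma)\le\H^1(K)<\infty$, $\Gamma$ is itself locally connected, hence a Peano continuum, hence contains an arc $\beta$ from $x$ to $y$. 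But $\beta$ is a subcontinuum of $\Gamma$ containing $x$ and $y$, so minimality forces $\beta=\Gamma$; thus $\Gamma$ is an arc in $K$ joining $x$ to $y$. As $x,y$ were arbitrary, $K$ is arcwise connected.

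Everything therefore rests on the claim: \emph{a continuum $\Gamma\subseteq\R^2$ with $\H^1(\Gamma)<\infty$ is locally connected.} I would argue by contradiction. If $\Gamma$ is not locally connected, then it is not uniformly locally connected either, so there exist $\varepsilon_0>0$ and points $a_n,b_n\in\Gamma$ with $|a_n-b_n|\to0$ that cannot be joined in $\Gamma$ by a connected set of diameter $<\varepsilon_0$; passing to a subsequence, $a_n,b_n\to p$ for some $p\in\Gamma$. Fix $\rho\in(0,\varepsilon_0/2)$ so small that $\Gamma\not\subseteq\overline B(p,\rho)$. For all large $n$ the points $a_n,b_n$ lie in $\Gamma\cap\overline B(p,\rho)$ but in distinct connected components of it, since a single component of this set has diameter $\le2\rho<\varepsilon_0$. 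Moreover, every component of the compact set $\Gamma\cap\overline B(p,\rho)$ must meet $\partial B(p,\rho)$: a component contained in the open ball $B(p,\rho)$ would, since components and quasi-components coincide in a compact space, be a nonempty proper subset of $\Gamma$ that is open and closed in $\Gamma$, contradicting connectedness. Consequently the component $C_n$ of $a_n$ contains both a point near $p$ and a point of $\partial B(p,\rho)$, so $\diam C_n\ge\rho/2$ for $n$ large; and infinitely many of these components are distinct, since otherwise $p$ would be a limit point of points lying in two distinct --- hence disjoint --- components. Thus $\Gamma$ contains infinitely many pairwise disjoint subcontinua of diameter $\ge\rho/2$, whence $\H^1(\Gamma)\ge\sum_n\H^1(C_n)\ge\sum_n\diam C_n=+\infty$ (using $\H^1(C)\ge\diam C$ for every connected $C$), a contradiction.

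The step I expect to be the main obstacle is exactly this topological extraction --- converting the failure of local connectedness into genuinely disjoint subcontinua of a fixed positive diameter, together with the quasi-component/clopen argument forcing those components to reach the sphere; this is the substantive content behind \cite{Am} and classical continuum theory. The remaining ingredients are routine: stability of continua under nested intersections, the inequality $\H^1\ge\diam$ on connected sets (via projection onto a line), and the fact that every Peano continuum is arcwise connected --- itself a consequence of the Hahn--Mazurkiewicz theorem together with the elementary lemma that any path between two points of a Hausdorff space contains an arc joining them. (Alternatively one could produce $\Gamma$ as an $\H^1$-minimiser among connecting subcontinua, via the Blaschke selection theorem and Go\l{}ab's lower-semicontinuity theorem for $\H^1$ on continua, but the local-connectedness claim is still needed and nothing is gained.)
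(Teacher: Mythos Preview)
The paper does not supply a proof of this lemma; it simply cites \cite{Am}. Your argument is a correct, self-contained proof of this classical fact (due essentially to Eilenberg--Harrold; see also Falconer). The heart of it --- that a continuum of finite $\H^1$-measure must be locally connected --- is handled correctly: the boundary-bumping step (every component of $\Gamma\cap\overline B(p,\rho)$ meets $\partial B(p,\rho)$, via the coincidence of components and quasi-components in compact spaces) is sound, and the pigeonhole extraction of infinitely many disjoint components of diameter $\ge\rho/2$, combined with $\H^1(C)\ge\diam C$ for connected $C$, yields the contradiction. One small wording issue: it is not the $C_n$ alone that must be infinitely many distinct components, but rather the $C_n$ together with the components $D_n\ni b_n$; your parenthetical justification (``$p$ would lie in two distinct components'') is in fact the argument for this corrected statement.

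One remark on economy: the Zorn/irreducibility detour at the start is unnecessary. Your local-connectedness argument applies to $K$ itself (not just to a minimal subcontinuum $\Gamma$), so $K$ is already a Peano continuum and hence arcwise connected directly. Passing to an irreducible $\Gamma$ does give the marginally sharper statement that every irreducible subcontinuum between two points of $K$ is an arc, but this is not needed here and adds a layer that the reader must unwind.
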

By {\it arc} we mean a
set which is homeomorphic to an interval of the straight line.

\begin{lem}
\label{lem3.1.4} {\sl For any $y\in\R$ satisfying (i)--(v) of
Lemma~\ref{cor3.1.2}, for any $x\in v^{-1}(y)$, and for all
sufficiently small $r>0$ the connected component $K\ni x$ of the
set $B(x,r)\cap v^{-1}(y)$ contains an arc $J\ni x$ with endpoints
on $\partial B(x,r)$. Moreover, the set $J\setminus\{x\}$ intersects
two connected components of the set $B(x,r)\cap
v^{-1}(y)\setminus\{x\}$. }
\end{lem}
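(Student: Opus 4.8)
The plan is to normalise, split according to the local differentiability type of $x$, straighten the level set near $x$ by means of Lemma~\ref{lem3.1.1}, and then carry out a separator argument inside the disc. After a translation and the subtraction of a constant we may assume $x=0$ and $y=v(0)=0$. By Lemma~\ref{cor3.1.2}(i) the point $0$ lies in $v^{-1}(0)\subset\R^2\setminus A_v=G_v\cup K_v$, so it suffices to handle the two cases $0\in G_v$ and $0\in K_v$, using in each the relevant part of Lemma~\ref{lem3.1.1} together with Lemma~\ref{cor3.1.2}(ii)--(iii). All constants will be chosen once and for all, and $r$ will range over a fixed interval $(0,r_0)$.

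\emph{Step 1 (local structure of the level set; this is the crux).} I first show that for every $r\in(0,r_0)$ the compact set $Z:=v^{-1}(0)\cap\Cl B(0,r)$ has the following shape. In the case $0\in G_v$ the expansion $v(z)=z\cdot\nabla v(0)+o(|z|)$ on $\Cl B(0,r)$, together with $\nabla v(0)\neq0$, forces $Z$ into a thin conical neighbourhood of the line $\ell_0=\{z:z\cdot\nabla v(0)=0\}$; in the case $0\in K_v$ the half-space expansions $v(z)=z\cdot\lambda(0)+o(|z|)$ on $B_+(0,r)$ and $v(z)=z\cdot\mu(0)+o(|z|)$ on $B_-(0,r)$, together with $\lambda(0),\mu(0)\neq0$ and their transversality to $\nu(0)$, force $Z$ into a thin conical neighbourhood of the union $\ell_0=\rho_+\cup\rho_-$ of the two rays issuing from $0$ in $\{z:z\cdot\lambda(0)=0\}\cap\Cl\{z:z\cdot\nu(0)\ge0\}$ and in $\{z:z\cdot\mu(0)=0\}\cap\Cl\{z:z\cdot\nu(0)\le0\}$ (here one uses that $0$ is a regular point of $v$ restricted to the curve through $0$, which is exactly what Lemma~\ref{cor3.1.2}(iii) provides). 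In either case $\ell_0\setminus\{0\}$ has two components, and the thinness yields: (a) $Z\setminus\{0\}$ is a disjoint union of two relatively clopen "halves" $P_+,P_-$ lying near the two rays of $\ell_0$; (b) $v^{-1}(0)\cap\partial B(0,r)$ lies in two disjoint arcs $A_+\subset\Cl P_+$, $A_-\subset\Cl P_-$ of $\partial B(0,r)$; (c) the two complementary arcs of $\partial B(0,r)\setminus(A_+\cup A_-)$ carry $v>0$, resp. $v<0$, and contain points $ru_+$, resp. $ru_-$, with $v(ru_+)>0>v(ru_-)$; (d) the polygonal path joining $ru_+$ to $ru_-$ through $0$ meets $Z$ only at $0$. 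Making all of this precise — in particular, checking that the first-order information, available only \emph{at} $0$, genuinely separates $Z\setminus\{0\}$ into the two halves $P_\pm$ — is elementary but constitutes the main obstacle; it is here that the non-degeneracy and transversality hypotheses (ii)--(iii) on $y$ are used in an essential way.

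\emph{Step 2 (producing the arc; routine plane topology).} Extend $v$ radially by $\tilde v(z)=v(rz/|z|)$ for $|z|\ge r$. Then $\tilde v$ is continuous on $\R^2$, $\tilde v^{-1}(0)$ is $Z$ with radial "whiskers" attached along $v^{-1}(0)\cap\partial B(0,r)$, and $\tilde v^{-1}(0)$ separates $2ru_+$ from $2ru_-$ in $\R^2$ because $\tilde v(2ru_\pm)=v(ru_\pm)$ has sign $\pm$. By the classical fact that if a closed planar set separates two points then one of its connected components already does so, and by deleting the hanging whiskers, we obtain a connected subset of $Z$ that contains $0$ (by (d), since the path through $0$ must hit it and hits $Z$ only at $0$) and separates $ru_+$ from $ru_-$ in $\Cl B(0,r)$; enlarging it to the connected component $\hat K$ of $0$ in $Z$ only improves the separation. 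A separating set must meet $\partial B(0,r)$ (otherwise $\partial B(0,r)$, connected and disjoint from it, would join $ru_+$ to $ru_-$), and the same argument applied to the arcs $\partial B(0,r)\setminus A_\pm$ shows $\hat K$ meets \emph{both} $A_+$ and $A_-$. Since $\H^1(\hat K)\le\H^1(v^{-1}(0))<\infty$ by Lemma~\ref{cor3.1.2}(v), Lemma~\ref{lem3.1.3} gives an arc $\alpha\colon[0,1]\to\hat K$ from a point $a\in\hat K\cap A_+$ to a point $b\in\hat K\cap A_-$. As $a\in P_+$, $b\in P_-$ and $\{P_+,P_-\}$ is a clopen partition of $Z\setminus\{0\}$, the arc $\alpha$ cannot miss $0$, so $\alpha(t^\ast)=0$ for a unique $t^\ast\in(0,1)$ and $\alpha([0,t^\ast))\subset P_+$, $\alpha((t^\ast,1])\subset P_-$. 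Finally put $s_1=\sup\{s\le t^\ast:\alpha(s)\in\partial B(0,r)\}$, $s_2=\inf\{s\ge t^\ast:\alpha(s)\in\partial B(0,r)\}$ and $J=\alpha([s_1,s_2])$: this is an arc with $0=x\in J$, with both endpoints on $\partial B(0,r)$, whose relative interior $\alpha((s_1,s_2))$ is a connected subset of $v^{-1}(0)\cap B(0,r)$ containing $0$ and hence lies in the component $K$ of $0$ in $v^{-1}(0)\cap B(0,r)$; moreover $J\setminus\{0\}$ meets $P_+\cap B(0,r)$ and $P_-\cap B(0,r)$, which are distinct connected components of $v^{-1}(0)\cap B(0,r)\setminus\{0\}$. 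This is the assertion of the lemma, for every $r\in(0,r_0)$.

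To summarise, the real work is Step 1 — deducing the "thin cone around two transversal rays" description of $v^{-1}(y)$ near $x$ from the pointwise expansions of Lemma~\ref{lem3.1.1} and the transversality in Lemma~\ref{cor3.1.2}(ii)--(iii), and in particular the splitting $v^{-1}(y)\cap B(x,r)\setminus\{x\}=P_+\sqcup P_-$. Once that local picture is available, the separator / arcwise-connectedness / clipping argument of Step 2 is standard.
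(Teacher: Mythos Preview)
Your argument is correct in outline, and the route differs from the paper's in the topological step. Both proofs begin with the same normalisation and both use the first-order expansions of Lemma~\ref{lem3.1.1} together with conditions (ii)--(iii) of Lemma~\ref{cor3.1.2}; both also finish by invoking Lemma~\ref{lem3.1.3} to extract an arc from a compact connected set of finite $\H^1$-measure. The difference is in how the connected piece reaching $\partial B(0,r)$ is produced. The paper does not set up the cone picture $P_+\sqcup P_-$; instead it only uses that $v$ is strictly monotone on the segment $L$ tangent to $L_i$ (perpendicular to $\nu(0)$), splits the disc into half-discs $\Omega_\pm$ along $L$, and inside each half-disc applies Lemma~\ref{lemt3} to a connected component $G$ of $\{v>0\}\cap\Omega_+$: the boundary of the complementary component $\Omega_1$ furnishes a compact connected set $K_+\subset v^{-1}(0)$ joining $0$ to the upper circular arc, from which Lemma~\ref{lem3.1.3} yields $J_+$. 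The lower half gives $J_-$, and $J=J_+\cup J_-$. So the paper trades your ``radial extension + classical separator theorem + clipping'' package for its own Lemma~\ref{lemt3}, and builds $J$ from two half-arcs rather than by cutting a single arc.

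What each buys: the paper's route is shorter and self-contained (no appeal to the ``some component already separates'' theorem), and it needs less of the local picture---just monotonicity of $v$ on $L$, not the full cone confinement. Your route makes the geometry explicit (two transversal rays, clopen splitting $P_+\sqcup P_-$) and treats the $G_v$ and $K_v$ cases in a uniform way, whereas the paper writes out only the $K_v$ case. One small slip in your last line: $P_+\cap B(0,r)$ and $P_-\cap B(0,r)$ need not themselves be connected, so they are not ``components''; the correct statement, which is what your argument actually proves, is that the two halves of $J\setminus\{0\}$ lie in components of $v^{-1}(0)\cap B(0,r)\setminus\{0\}$ contained in $P_+$ and $P_-$ respectively, hence distinct.
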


\begin{proof} 
We may assume without loss of generality that $x=0$, $v(x)=0$ and the vector
$\nu(x)$ (from Lemmas~\ref{lem3.1.1}, \ref{cor3.1.2}) is
vertical: $\nu(x)=(0,1)$. Let $L$ be the intersection of the open 
ball $B(0,r)$ with the horizontal axis:
$L=\{(t,0):t\in(-r,r)\}$. Denote by $A,C$ the endpoints of the segment~$L$:
$A=(r,0)$, $C=(-r,0)$. If $r>0$ is sufficiently small, then
by the differentiability properties recorded in
Lemmas~\ref{lem3.1.1}, \ref{cor3.1.2} we infer that the function $v$ is strictly 
monotone on $L$.
For definiteness assume that $v(t,0)>0$ for $t\in(0,r]$ and $v(t,0)<0$ for $t\in[-r,0)$. 
In particular, $v(A)>0>v(C)$.
Denote $\Omega_+=\{(t,s)\in B(0,r):s>0\}$, $\Omega_-=\{(t,s)\in B(0,r):s<0\}$.
Denote by $G$ the connected component of the open set $\{z\in \Omega_+:v(z)>0\}$ such that
$A\in\partial G$. Denote by $\Omega_1$ the connected component of the open set
$\Omega_+\setminus\Cl G$ such that $C\in \partial\Omega_1$. 
Put $K_+=\Cl (\Omega_+\cap\partial\Omega_1)$.
Obviously $0\in K_+$, \ $v\equiv 0$ on $K_+$, 
and \linebreak $K_+\cap(\partial\Omega_+)\setminus\Cl\Omega_-\ne\emptyset$.
Let $D_+\in K_+\cap(\partial\Omega_+)\setminus\Cl\Omega_-$.
By Lemma~\ref{lemt3} $K_+$ is a compact connected set, and by (v) of Lemma~\ref{cor3.1.2}
$\H^{1}(K_+)<\infty$. Then by Lemma~\ref{lem3.1.3} there exists an arc 
$J_+\subset K_+$ joining $0$ to $D_+$. 
Because $L\cap v^{-1}(0)=\{0\}$ we have equality $J_+\cap\Cl \Omega_-=\{0\}$.
Analoguously, there exists a point $D_-\in(\partial\Omega_-)\setminus\Cl\Omega_+$ and 
an arc $J_-\subset \Cl(\Omega_-\cap v^{-1}(0))$ joining $0$ to $D_-$ so that
$J_-\cap\Cl \Omega_+=\{0\}$. Now $J=J_+\cup J_-$ is the required arc.
\end{proof}

\begin{lem}
\label{lem3.1.5} {\sl For any $y\in\R$ satisfying (i)--(v) of
Lemma~\ref{cor3.1.2} and for any connected component $C$ of
$v^{-1}(y)$ there exists a cycle $S\subset C$. Moreover, if there
is only one cycle $S\subset C$, then $S=C$. }
\end{lem}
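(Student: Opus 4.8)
The plan is to show that a connected component $C$ of $v^{-1}(y)$ — for $y$ satisfying (i)--(v) of Lemma~\ref{cor3.1.2} — cannot be a contractible set, which together with the rectifiability information forces it to contain a cycle. First I would record the structural facts already available: by (v) of Lemma~\ref{cor3.1.2} we have $\H^1(v^{-1}(y))<\infty$, so $\H^1(C)<\infty$; moreover $C$ is closed in $\R^2$ (as a connected component of the closed set $v^{-1}(y)$) and bounded, hence compact. By Lemma~\ref{lem3.1.3}, $C$ is therefore arcwise connected. The key local input is Lemma~\ref{lem3.1.4}: at \emph{every} point $x\in C$, for all sufficiently small $r>0$, the component of $B(x,r)\cap v^{-1}(y)$ through $x$ contains an arc $J\ni x$ with endpoints on $\partial B(x,r)$ whose punctured version $J\setminus\{x\}$ meets two distinct components of $B(x,r)\cap v^{-1}(y)\setminus\{x\}$. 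In particular $C\setminus\{x\}$ is disconnected near $x$ into at least two pieces that both accumulate at $x$, so $x$ is not a ``free endpoint'' of $C$: every point of $C$ locally looks like an interior point of an arc (or a branch point).

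Next I would argue that a compact, arcwise connected set of finite $\H^1$ measure in which no point is a local endpoint must contain a cycle. The cleanest route is via the structure theory of such continua: a compact connected set $C$ with $\H^1(C)<\infty$ is a \emph{topological graph} (a finite union of arcs meeting only at endpoints) up to an $\H^1$-null set — more precisely it is arcwise connected with the property that between any two points there is an arc, and the ``branching'' is at most countable. If $C$ were a tree (i.e.\ contained no cycle), it would have to possess endpoints: pick any arc $\Gamma=[a,b]\subset C$ of maximal length (such a maximum is attained by a compactness/Blaschke-type argument using $\H^1(C)<\infty$), and consider the endpoint $b$. By Lemma~\ref{lem3.1.4} applied at $x=b$, there is a small arc through $b$ inside $C$ continuing past $b$ on ``two sides''; at least one of these continuations is not contained in $\Gamma$ near $b$ (since $\Gamma$ ends at $b$), so we may prolong $\Gamma$ beyond $b$, contradicting maximality — unless the prolongation re-enters $\Gamma$, which immediately produces a cycle. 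Either way a cycle $S\subset C$ exists. I would present this as: assume no cycle, derive that $C$ is a dendrite of finite length, note that a nondegenerate dendrite has at least two endpoints, and contradict Lemma~\ref{lem3.1.4} at an endpoint.

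Finally, for the ``moreover'' part: suppose $S\subset C$ is the unique cycle in $C$. If $S\subsetneq C$, pick $x\in C\setminus S$. Since $C$ is arcwise connected, there is an arc in $C$ from $x$ to $S$; let $p$ be the first point of this arc lying on $S$, so we have an arc $\Gamma\subset C$ with $\Gamma\cap S=\{p\}$ and $\Gamma$ nondegenerate. The endpoint $x$ of $\Gamma$ is, by the previous paragraph's argument (Lemma~\ref{lem3.1.4} at $x$), not a local endpoint of $C$, so $\Gamma$ can be prolonged within $C$ beyond $x$; iterating and using $\H^1(C)<\infty$, the prolongation must eventually either return to a point already visited — creating a second cycle disjoint-in-interior from $S$, contradicting uniqueness — or it can be continued indefinitely, which is impossible since $C$ has finite length. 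Hence $C\setminus S$ is empty, i.e.\ $S=C$.

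\medskip

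The main obstacle I expect is making the ``no local endpoints $\Rightarrow$ contains a cycle'' step fully rigorous: one needs the correct continuum-theoretic framework (compact connected sets of finite $\H^1$ measure are exactly the ``finitely connected'' graphs modulo $\H^1$-null sets, and dendrites have endpoints), together with a careful use of the \emph{second} assertion in Lemma~\ref{lem3.1.4} (that the arc $J$ genuinely bifurcates at $x$ into two components of the punctured local level set) to rule out the degenerate possibility that the ``two sides'' coincide. The length-maximality argument and the uniqueness argument are then routine once this topological fact is in place.
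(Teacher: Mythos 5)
Your strategy is essentially the paper's: use $\H^1(v^{-1}(y))<\infty$, the arcwise connectedness coming from Lemma~\ref{lem3.1.3}, and above all the continuation property of Lemma~\ref{lem3.1.4} at \emph{every} point of the level set, and then force a maximal arc to close up into a cycle. The paper does exactly this, with an open arc maximal with respect to inclusion in place of your arc of maximal length (its endpoints exist because the arc has finite $\H^1$-measure; if an endpoint coincides with the other or lies on the arc one gets a cycle, otherwise Lemma~\ref{lem3.1.4} lets one continue the arc, contradicting maximality), and for the second statement it simply takes a maximal arc through a point $z\in C\setminus S$ and reruns the same argument to produce a second cycle.

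Two of your steps are weaker than they should be. First, the existence of a longest arc is not immediate from a ``Blaschke-type'' argument: Hausdorff limits of arcs need not be arcs (Go\l{}ab's theorem only gives a continuum of no larger length), so attainment of the supremum of lengths requires more care; the paper's inclusion-maximal arc sidesteps this particular issue (though the existence of a maximal object still deserves a word in either variant). Second, and more substantively, in the ``moreover'' part the dichotomy ``either the prolongation returns to a visited point, or it can be continued indefinitely, which is impossible since $C$ has finite length'' does not close the argument: finite length does not forbid infinitely many prolongation steps, since the steps may shrink (e.g.\ the prolongation may spiral towards a point of $C$ without ever revisiting a point), so ``continued indefinitely'' is not by itself a contradiction. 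The repair is to argue as in the first part (and as in the paper): take a maximal arc in $C$ containing $z$ and apply the first-part alternative to it; the cycle it generates is then seen to differ from $S$, contradicting uniqueness. With these two points fixed, your proof coincides with the paper's.
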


\begin{proof}
Let $J_1$ be a maximal {\it open} arc  (i.e.,
$J_1$ is homeomorphic to an {\it open} interval of $\R$) in $C$. Such an arc exists
by the previous Lemma~\ref{lem3.1.4}. By (v) of
Lemma~\ref{cor3.1.2} the inequality $\H^1(J_1)<\infty$ holds.
So the arc $J_1$ has endpoints, denote them by $x$, $y$. If $x=y$, then there is
nothing to prove. The same applies for the case $x\in J_1$. If $x\ne y$
and $x\notin J_1$ we can continue the arc $J_1$ through $x$ by
Lemma~\ref{lem3.1.4}. This contradiction establishes the
existence of a cycle $S\subset C$.

To prove the second statement suppose that $z\in C\setminus S$.
Take a maximal arc $J_2$ in $C$ containing~$z$. By the above arguments
this arc generates a cycle $S_2\ne S$, $S_2\subset C$. 
\end{proof}

\begin{cor}
\label{lem3.1.6} {\sl There exists at most countable set $Z\subset
\R$ such that for any $y\in\R\setminus Z$ satisfying (i)--(v) of
Lemma~\ref{cor3.1.2} any connected component $C$ of
$v^{-1}(y)$ is a cycle. }
\end{cor}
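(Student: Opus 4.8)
The plan is to take for $Z$ the set of those $y\in\R$ for which $v^{-1}(y)$ has a connected component that is not a cycle, and to prove that this set is at most countable; granted this, the Corollary is immediate from Lemma~\ref{lem3.1.5}, since by that lemma a component $C$ of $v^{-1}(y)$ fails to be a cycle precisely when it contains two distinct cycles (equivalently, when $\R^2\setminus C$ has at least two bounded complementary components). My first step would be to reduce this to a statement about branch points. If $y\in Z$ and $C$ is a non-cycle component of $v^{-1}(y)$, then $C$ is compact, connected and $\H^1(C)<\infty$ by (v) of Lemma~\ref{cor3.1.2}, hence a Peano continuum and arcwise connected by Lemma~\ref{lem3.1.3}; as $C$ contains a cycle but is neither an arc nor a cycle, the classical description of arcs and simple closed curves among Peano continua yields a point $x_0\in C$ of order $\ge3$, from which at least three arcs of $C$ issue, pairwise meeting only at $x_0$. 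Since distinct level sets are disjoint, $y\mapsto x_0(y)$ is injective, so it suffices to prove that the set $\mathcal B$ of all such branch points (over all $y\in Z$) is at most countable.

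The second step is a local analysis near a branch point $x_0$, where I write $y=v(x_0)$; I describe the case $x_0\in G_v$, the case $x_0\in K_v$ being analogous via the half-space differentials of Lemma~\ref{lem3.1.1}(iv) together with (iii)--(iv) of Lemma~\ref{cor3.1.2}. By (i) of Lemma~\ref{cor3.1.2}, $x_0\notin A_v$, and by (ii), $a:=\nabla v(x_0)\ne0$. Differentiability of $v$ at $x_0$ (Lemma~\ref{lem3.1.1}(iii)) then gives, for all small $r>0$ and with $\varepsilon_r:=\sup_{0<|z-x_0|\le r}|z-x_0|^{-1}|v(z)-y-a\cdot(z-x_0)|\to0$, that $v^{-1}(y)\cap B(x_0,r)$ is contained in the thin double-wedge $\Lambda_r=\{z\in B(x_0,r):|a\cdot(z-x_0)|\le\varepsilon_r|z-x_0|\}$ about the line through $x_0$ orthogonal to $a$, that $\Lambda_r\setminus\{x_0\}$ has exactly two connected components (two thin wedges), that $v>y$ on one and $v<y$ on the other of the two large sectors $B(x_0,r)\setminus\Lambda_r$, and that $|v(z)-y|\le C\varepsilon_r|z-x_0|$ for $z\in\Lambda_r$. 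Consequently the three or more arcs of $v^{-1}(y)$ issuing from $x_0$ --- more precisely, sub-arcs $\beta_1,\beta_2,\dots$ of them joining $x_0$ to $\partial B(x_0,r)$ --- each lie, apart from $x_0$, in one of the two thin wedges, so by the pigeonhole principle two of them, say $\beta_1,\beta_2$, lie in the same wedge; joining their endpoints on $\partial B(x_0,r)$ by the short circular arc $\gamma$ inside that wedge produces a Jordan region $R_r$ on whose boundary $v\equiv y$ along $\beta_1\cup\beta_2$ and $|v-y|\le C\varepsilon_r r$ on $\gamma$. A further ingredient, again from (ii)--(iii) and the differentiability properties of Lemma~\ref{lem3.1.1}, is that $v-y$ takes both signs in every neighbourhood of every point of $v^{-1}(y)$.

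The final step --- which I expect to be the main obstacle --- is to turn this qualitative picture into the countability of $\mathcal B$. If $v\equiv y$ on $R_r$, then every point of the nonempty open set $R_r$ lies in $v^{-1}(y)$ and has vanishing half-space differentials, contradicting (ii); so $v$ attains on $\overline{R_r}$ a value $\ne y$, say a maximum $M_r$ with $y<M_r\le y+C\varepsilon_r r$ (the case of a minimum below $y$ being symmetric), achieved either at an interior point of $R_r$ --- a local maximum of $v$ --- or on $\gamma$. Using the ``both signs'' property to fix, along each of $\beta_1,\beta_2$, the side on which $v-y>0$, one is led to a dichotomy: either $v-y$ changes sign inside $R_r$, which produces a point of $v^{-1}(y)$ strictly interior to the sector and hence additional branch structure of $C$; or $v$ has a strict local extremum inside $R_r$ of value tending to $y$ as $r\to0$. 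Organising these nested occurrences so as to attach to each branch point an element of a fixed countable set is exactly where the hypothesis $v\in\BV^2$ must be used essentially: through Theorems~\ref{Thh3.3} and \ref{Th1} and the coarea formula one bounds the total length of the level sets and the total amount of branching by $\|D^2v\|(\R^2)$ and $\|\nabla v\|_{\LL^1(\R^2)}$, which forces $\mathcal B$ to be at most countable. Making this counting argument precise --- rather than the geometric analysis of the earlier steps --- is, in my estimation, the crux of the proof.
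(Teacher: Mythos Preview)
Your proposal has a genuine gap: the ``final step'' is not a proof at all, only a hope. You concede this yourself (``Making this counting argument precise \dots\ is, in my estimation, the crux of the proof''), and what you sketch --- bounding ``the total amount of branching'' by $\|D^2v\|(\R^2)$ and $\|\nabla v\|_{\LL^1}$ via the coarea formula --- does not deliver countability. A finite integral bound shows that a set is, say, $\sigma$-finite or of finite measure, not that it is countable; and the dichotomy you set up between interior sign changes and local extrema in the region $R_r$ is not resolved into an actual injection from $\mathcal B$ into a fixed countable set. Moreover, your reduction step is slightly over-engineered: the local analysis near a branch point and the wedge geometry are correct but never cashed in, because the combinatorial/topological heart of the matter is left open.

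The paper's argument is much shorter and rests on a classical result rather than on the $\BV_2$ structure. By Lemma~\ref{lem3.1.5}, if a component $C$ of $v^{-1}(y)$ is not a cycle then it contains at least two distinct cycles, and hence $\R^2\setminus C$ has more than two connected components. At this point one invokes Kronrod's theorem on level sets of continuous functions on the plane \cite{Kr}: for a continuous $v$, the set of values $y$ for which some component of $v^{-1}(y)$ separates the plane into more than two pieces is at most countable. This is a purely topological fact (the ``tree of level components'' has only countably many branch points), and it applies here because $v$ is continuous; no use of $\BV_2$, coarea, or Theorems~\ref{Thh3.3}--\ref{Th1} is needed for this step. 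So rather than trying to count branch \emph{points} via measure-theoretic bounds, you should count branch \emph{values} via Kronrod's structural theorem.
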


\begin{proof}
Suppose $y\in\R$ satisfies (i)--(v) of
Lemma~\ref{cor3.1.2} and a connected component $C$ of
$v^{-1}(y)$ is not a cycle. Then by Lemma~\ref{lem3.1.5} the set
$\R^2\setminus C$ has more than two connected  components. By
results of \cite{Kr} this is possible only for at most countable
many values of~$y$. 
\end{proof}

We need the following classical estimate and its corollary:

\begin{lem}[see, for example, Lemma~1 of \S4.8 in \cite{EG}]
\label{lem3.4} {\sl There exists the constant $C_5>0$ such that
the estimate
$$
\Capp_1(\{x\in\R^2\,:\,\exists r>0\ \dashint_{B(x,r)}|\nabla v(y)|\,\dd y\ge
\delta\})\le{C_5}\frac1\delta\|D^2v\|(\R^2)
$$ 
holds.}
\end{lem}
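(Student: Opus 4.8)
Write $m:=\|D^2v\|(\R^2)$ and $u:=|\nabla v|$. Since $\nabla v\in\BV(\R^2,\R^2)$ and $\xi\mapsto|\xi|$ is $1$-Lipschitz we have $u\in\BV(\R^2)\cap\LL^1(\R^2)$, $u\ge0$ and $\|Du\|(\R^2)\le m$. Denote by $E_\delta$ the set in the statement. The plan is to bound the Hausdorff content, $\H^1_\infty(E_\delta)\le C\,m/\delta$ with $C$ absolute, and then pass from $\H^1_\infty$ to $\Capp_1$ by Lemma~\ref{lem2.2}. The first step is to freeze one good super--level set of $u$: by the Coarea formula $\int_0^{\infty}\H^1(\partial^M\{u>s\})\,\dd s=\|Du\|(\R^2)\le m$, so there is a level $t\in(\delta/4,\delta/2)$ with $\H^1(\partial^M U)\le 4m/\delta$, where $U:=\{u>t\}$; note $\L^2(U)\le\|u\|_{\LL^1}/t<\infty$, which will make a doubling argument terminate.

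Next I would classify the ``bad balls''. Let $x\in E_\delta$ and fix $r>0$ with $\dashint_{B(x,r)}u\,\dd y\ge\delta$; put $B=B(x,r)$, so $\int_B u\ge\delta\pi r^2$. Splitting the integral through $U$ (on $B\setminus U$ one has $u\le t<\delta/2$) gives $\int_{B\cap U}u>\tfrac\delta2\pi r^2$. Case~(F), \emph{$B$ is fat in $U$}: $\L^2(U\cap B)\ge\tfrac14\pi r^2$. Dilating the radius by successive factors $\sqrt2$ until the $U$--density first drops below $\tfrac34$ (possible since $\L^2(U)<\infty$) produces a ball $Q_x=B(x,\rho_x)$ with $\rho_x\ge r$, $\rho_x\le(4\L^2(U)/\pi)^{1/2}$, and $\tfrac14\pi\rho_x^2\le\L^2(U\cap Q_x)\le\tfrac34\pi\rho_x^2$; the relative isoperimetric inequality in the ball $Q_x$ then yields $\H^1(Q_x\cap\partial^M U)\ge c_0\rho_x$ for an absolute $c_0>0$. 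Case~(T), \emph{$B$ is thin in $U$}: $\L^2(U\cap B)<\tfrac14\pi r^2$. Then for every $s>t$ the set $\{u>s\}\cap B$ lies in $U\cap B$, so $\L^2(\{u>s\}\cap B)<\tfrac12\pi r^2$ and the relative isoperimetric inequality gives $\H^1(B\cap\partial^M\{u>s\})\ge c_0'\,\big(\L^2(\{u>s\}\cap B)\big)^{1/2}$; since moreover $\int_t^{\infty}\L^2(\{u>s\}\cap B)\,\dd s=\int_{B\cap U}(u-t)=\int_{B\cap U}u-t\,\L^2(U\cap B)>\tfrac{3\delta}{8}\pi r^2$, estimating $\phi^{1/2}\ge\phi/(\sup\phi)^{1/2}$ with $\phi(s)=\L^2(\{u>s\}\cap B)\le\tfrac14\pi r^2$ and integrating in $s$, the Coarea formula delivers
\[
\|Du\|(B)\ \ge\ \int_t^{\infty}\H^1(B\cap\partial^M\{u>s\})\,\dd s\ \ge\ \frac{2c_0'}{\sqrt{\pi}\,r}\int_t^{\infty}\phi(s)\,\dd s\ \ge\ c_1\,\delta\,r ,
\]
with $c_1>0$ absolute.

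Finally I would assemble the bound. Split $E_\delta=E_\delta^{\mathrm F}\cup E_\delta^{\mathrm T}$ according to whether $x$ admits a bad ball of type~(F) or of type~(T). For $E_\delta^{\mathrm F}$, apply the basic $5r$--covering lemma (see \cite{EG}) to $\{Q_x:x\in E_\delta^{\mathrm F}\}$ (radii uniformly bounded) to extract a \emph{pairwise disjoint} subfamily $\{Q_{x_i}\}$ with $E_\delta^{\mathrm F}\subset\bigcup_i 5Q_{x_i}$; since $A\mapsto\H^1(A\cap\partial^M U)$ is a Radon measure,
\[
\H^1_\infty(E_\delta^{\mathrm F})\ \le\ \sum_i 10\rho_{x_i}\ \le\ \frac{10}{c_0}\sum_i\H^1(Q_{x_i}\cap\partial^M U)\ \le\ \frac{10}{c_0}\,\H^1(\partial^M U)\ \le\ \frac{40}{c_0}\cdot\frac m\delta .
\]
For $E_\delta^{\mathrm T}$, the same covering lemma applied to the bad balls $\{B(x,r_x):x\in E_\delta^{\mathrm T}\}$ (radii $\le(\|u\|_{\LL^1}/\pi\delta)^{1/2}$) gives a disjoint subfamily $\{B(x_i,r_i)\}$ covering $E_\delta^{\mathrm T}$ after $5$--dilation, whence, using $\|Du\|(B(x_i,r_i))\ge c_1\delta r_i$ and additivity of $\|Du\|$,
\[
\H^1_\infty(E_\delta^{\mathrm T})\ \le\ \sum_i 10 r_i\ \le\ \frac{10}{c_1\delta}\sum_i\|Du\|(B(x_i,r_i))\ \le\ \frac{10}{c_1\delta}\,\|Du\|(\R^2)\ \le\ \frac{10}{c_1}\cdot\frac m\delta .
\]
Adding the two bounds and applying Lemma~\ref{lem2.2} gives the claim, with $C_5=C_0\big(\tfrac{40}{c_0}+\tfrac{10}{c_1}\big)$.

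The main obstacle --- and the reason a direct weak--$(1,1)$ covering argument fails here --- is that a single bad ball $B(x,r)$ need not carry $\|D^2v\|$--mass of order $\delta r$: the average of $|\nabla v|$ over $B(x,r)$ can exceed $\delta$ while $|\nabla v|$ is enormous on a set of tiny measure and $\|D^2v\|(B(x,r))$ is arbitrarily small. The (F)/(T) dichotomy is built around exactly this point. In case~(T) the large values of $|\nabla v|$ are genuinely expensive, since each of the (necessarily small) super--level sets trapped inside the ball has long relative boundary; in case~(F), where that argument degenerates, the contribution is charged not to $\|D^2v\|$ but to the perimeter of the \emph{single} fixed set $U$, the essential trick being to run the covering lemma on the ``straddle balls'' $Q_x$ rather than on the original bad balls, so that their disjointness collapses $\sum_i\H^1(Q_{x_i}\cap\partial^M U)$ to at most $\H^1(\partial^M U)\le 4m/\delta$. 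Checking the relative isoperimetric inequality on balls with an absolute constant and the $\sqrt2$--dilation stopping are routine; the only delicate bookkeeping is the integration in $s$ in case~(T).
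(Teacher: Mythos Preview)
The paper does not prove this lemma; it is quoted as a classical result from Evans--Gariepy (\S4.8, Lemma~1). Your self-contained argument is correct.

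Your route is, however, quite different from the textbook one. Writing $u=|\nabla v|$, the standard proof first establishes the pointwise bound $\sup_{r>0} u_{B(x,r)}\le C\,I_1(\|Du\|)(x)$ (where $I_1\mu(x)=\int|x-y|^{-1}\,\dd\mu(y)$ is the Riesz potential of order~$1$), via the Poincar\'e--telescoping chain
\[
u_{B(x,r)}\ \le\ C\sum_{k\ge0}(2^kr)^{-1}\,\|Du\|\bigl(B(x,2^{k+1}r)\bigr),
\]
using $u\in\LL^1$ so that averages over large balls tend to zero; one then invokes a capacitary weak-type estimate for $I_1$. You bypass the potential-theoretic step entirely: you freeze one good super-level set $U$ of $u$ via the coarea formula, and charge each bad ball either to $\H^1(\partial^M U)$ (the fat case, after dilating to a straddle ball) or directly to $\|Du\|$ on the ball itself (the thin case), in both instances through the relative isoperimetric inequality and a $5r$-covering. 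This is more geometric and entirely elementary, at the price of the (F)/(T) bookkeeping; the textbook proof is shorter once the Riesz-potential machinery is in hand. Your diagnosis of the obstacle---that a single bad ball need not carry $\|Du\|$-mass of order $\delta r$---is exactly the point, and both proofs resolve it by reaching out to larger scales: yours via the $\sqrt2$-dilation stopping time, the standard one via the telescoping sum.
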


\begin{cor}
\label{lem3.5} {\sl The estimate
$$
\Capp_1(\{x\in G_v\,:\,|\nabla v(x)|>\delta\})\le{C_5}\frac1\delta\|D^2v\|(\R^2)
$$ 
holds.}
\end{cor}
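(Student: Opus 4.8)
The plan is to deduce Corollary~\ref{lem3.5} directly from Lemma~\ref{lem3.4} by a monotonicity-of-capacity argument; the only point requiring a word of justification is that every point of $G_v$ where $|\nabla v|$ exceeds $\delta$ lies in the set to which Lemma~\ref{lem3.4} applies.

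First I would record that at each $x\in G_v$ the gradient $\nabla v$ has a genuine Lebesgue point in the $L^1$ sense. Indeed, property (iii) of Lemma~\ref{lem3.1.1} gives $\dashint_{B(x,r)}|\nabla v(z)-\nabla v(x)|^2\,\dd z\to0$ as $r\searrow0$, and by the Cauchy--Schwarz (Jensen) inequality $\dashint_{B(x,r)}|\nabla v(z)-\nabla v(x)|\,\dd z\le\bigl(\dashint_{B(x,r)}|\nabla v(z)-\nabla v(x)|^2\,\dd z\bigr)^{1/2}\to0$, so that $\dashint_{B(x,r)}|\nabla v(z)|\,\dd z\to|\nabla v(x)|$ as $r\searrow0$.

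Next, fix $x\in G_v$ with $|\nabla v(x)|>\delta$. By the convergence just established there is some $r>0$ with $\dashint_{B(x,r)}|\nabla v(z)|\,\dd z\ge\delta$; hence $x$ belongs to the set $\{x\in\R^2:\exists r>0\ \dashint_{B(x,r)}|\nabla v(y)|\,\dd y\ge\delta\}$ appearing in Lemma~\ref{lem3.4}. Thus
$$
\{x\in G_v:|\nabla v(x)|>\delta\}\subset\{x\in\R^2:\exists r>0\ \dashint_{B(x,r)}|\nabla v(y)|\,\dd y\ge\delta\},
$$
and since $\Capp_1$ is monotone with respect to set inclusion (immediate from its definition as an infimum over test functions that are $\ge1$ in a neighborhood of the set), Lemma~\ref{lem3.4} yields the asserted bound $\Capp_1(\{x\in G_v:|\nabla v(x)|>\delta\})\le C_5\delta^{-1}\|D^2v\|(\R^2)$.

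There is really no serious obstacle here: the argument is a one-line inclusion together with monotonicity of capacity. The mildest care needed is in passing from the $L^2$-averaged smallness in Lemma~\ref{lem3.1.1}(iii) to $L^1$-averaged smallness (handled by Cauchy--Schwarz) and in observing that the \emph{strict} inequality $|\nabla v(x)|>\delta$ at the limit forces the average over some small ball to be $\ge\delta$.
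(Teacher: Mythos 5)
Your argument is correct and is exactly the intended derivation: the paper states this as an immediate corollary of Lemma~\ref{lem3.4}, relying on the Lebesgue-point property of $\nabla v$ at points of $G_v$ from Lemma~\ref{lem3.1.1}(iii) together with monotonicity of $\Capp_1$, which is precisely what you spell out (including the Cauchy--Schwarz step from $L^2$ to $L^1$ averages). Nothing is missing.
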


\begin{lem}
\label{lem3.6} {\sl For any $\varepsilon>0$ there exists a compact
set $F_\varepsilon\subset v(\R^2)$ and constants
$\delta_1,\delta_2>0$ such that $\H^1(v(\R^2)\setminus
F_\varepsilon)<\varepsilon$ and for all $y\in F_\varepsilon$ the
preimage $v^{-1}(y)$ satisfies the properties (i)-(v) from the
Lemma~\ref{cor3.1.2} and the following additional conditions:

(vi) for all $x\in v^{-1}(y)\cap G_v$ the estimates
$\delta_1>|\nabla v(x)|>\delta_2$ hold;

(vii) each connected component of the set $v^{-1}(y)$ is a cycle.}
\end{lem}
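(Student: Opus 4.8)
The plan is to build $F_\varepsilon$ by successively removing from $v(\R^2)$ the images of several "bad" sets, each of which has small image under $v$, and then to show that what remains has the desired properties. First I would discard a null set so that properties (i)--(v) of Lemma~\ref{cor3.1.2} hold; this is possible since those properties fail only for a set of $y$ of measure zero. Next, to secure the lower bound $|\nabla v(x)|>\delta_2$ in (vi), I would use Corollary~\ref{lem3.5} together with Lemma~\ref{lem2.2}: the set $\{x\in G_v:|\nabla v(x)|\le\delta_2\}$ is contained (for $\delta_2$ small) in a set whose image we can control via Theorem~\ref{Th1} and Corollary~\ref{Thh3.1}; more directly, the critical set $Z_v$ has $\H^1(v(Z_v))=0$, and for $\delta_2$ small the $v$-image of $\{x\in G_v:|\nabla v(x)|\le\delta_2\}$ shrinks to a set of measure zero as $\delta_2\searrow0$ (this is exactly the content of Corollary~\ref{cor10} applied via $\Cl_M E_{\delta_2}$). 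So for any $\varepsilon>0$ we may fix $\delta_2>0$ and then remove an open set of measure $<\varepsilon/4$ in $\R$ containing the image of that low-gradient set.

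For the upper bound $|\nabla v(x)|<\delta_1$, I would use the fact that $\nabla v\in\LL^1(\R^2)$, hence $\|D^2v\|$ and $\nabla v$ are finite; the set where $|\nabla v|$ is large has small measure, and passing to $v$-images via the Luzin $N$-property (Corollary~\ref{Thh3.1}) together with the coarea formula shows that $v(\{|\nabla v|\ge\delta_1\})$ has small $\H^1$-measure for $\delta_1$ large. Concretely, $\int_{-\infty}^{\infty}\H^1(\Omega\cap\partial^M\{v\le\lambda\})\,\dd\lambda=\|Dv\|(\R^2)<\infty$, so only a set of $\lambda$ of small measure can have large level-set length; combining with Lemma~\ref{lb1} or directly with the coarea formula bounds $\H^1(v(\{|\nabla v|\ge\delta_1\}))$. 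Remove the corresponding open set of measure $<\varepsilon/4$. This yields (vi). Finally, for (vii): by Corollary~\ref{lem3.1.6} there is an at most countable set $Z\subset\R$ outside of which every connected component of $v^{-1}(y)$ is a cycle, provided $y$ already satisfies (i)--(v); remove $Z$ (it has measure zero, so in particular we can cover it by an open set of $\H^1$-measure $<\varepsilon/4$ if we wish, but since it is countable we may simply delete it).

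Assembling: let $V\subset\R$ be the union of the (at most four) open sets and the countable set described above, chosen so that $\H^1(V)<\varepsilon$; set $F_\varepsilon=v(\R^2)\setminus V$, which after intersecting with a large compact exhaustion and using that $v$ is continuous (so $v(\R^2)$ is, up to a null set, contained in a bounded interval once we restrict to the relevant part — more carefully, $\{y:\H^1(v^{-1}(y))>0\}$ is contained in a bounded set since $\|Dv\|(\R^2)<\infty$) we may take to be compact. By construction every $y\in F_\varepsilon$ satisfies (i)--(v), (vi) with the chosen $\delta_1,\delta_2$, and (vii).

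The main obstacle I expect is (vi), and specifically the lower bound $|\nabla v(x)|>\delta_2$: one must rule out not merely points where $\nabla v$ vanishes but a whole sublevel set $E_{\delta_2}=\{|\nabla v|\le\delta_2\}$, whose image need not be small for fixed $\delta_2$ unless one invokes the quantitative estimate $\H^1(v(\Cl_M E_{\delta_2}))\le 2\delta_2[\H^1(\Omega\cap\partial^M E_{\delta_2})+\H^1(\partial\Omega)]$ from Corollary~\ref{lem9} and then controls $\int_0^\infty\H^1(\Omega\cap\partial^M E_{\delta})\,\dd\delta$ by the coarea formula to make $\H^1(v(\Cl_M E_{\delta_2}))\to0$ as $\delta_2\searrow0$ (Corollary~\ref{cor10}). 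The subtlety is that we need a single $\delta_2$ working for all $y\in F_\varepsilon$ simultaneously, but since $F_\varepsilon$ is defined \emph{after} fixing $\delta_2$ this is not circular: choose $\delta_2$ first so that $\H^1(v(\Cl_M E_{\delta_2}))<\varepsilon/4$, then exclude $v(\Cl_M E_{\delta_2})$ from $v(\R^2)$. A parallel remark handles $\delta_1$. Everything else is bookkeeping with countable unions of null sets and the continuity of $v$.
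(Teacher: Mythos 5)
Your treatment of the lower bound in (vi) (choose $\delta_2$ first via Corollary~\ref{cor10} so that $\H^1(v(\Cl_M E_{\delta_2}))<\varepsilon/4$, then delete that image) and of (vii) (delete the countable set from Corollary~\ref{lem3.1.6}) is exactly the paper's route, and the bookkeeping with null sets, (i)--(v) and compactness is routine. The genuine gap is the upper bound $|\nabla v(x)|<\delta_1$. You argue that $\{|\nabla v|\ge\delta_1\}$ has small Lebesgue measure and then invoke the Luzin $N$--property; but Theorem~\ref{Thh3.3} and Corollary~\ref{Thh3.1} are statements about sets of small $\H^1_\infty$--content (respectively $\H^1$--null sets), and small $\L^2$--measure gives no control whatsoever on $\H^1_\infty$: a set of tiny area can have huge, even infinite, one--dimensional content, so neither result applies to $\{|\nabla v|\ge\delta_1\}$ as you have estimated it. Your fallback via the coarea formula also does not close the gap: the bound $\int_{\R}\H^1\bigl(v^{-1}(y)\cap\{|\nabla v|\ge\delta_1\}\bigr)\,\dd y\le\int_{\{|\nabla v|\ge\delta_1\}}|\nabla v|\,\dd x$ (small for large $\delta_1$) only says that for most $y$ the level set meets the high-gradient set in a short set; it does not make that intersection empty, since a single point of intersection has zero $\H^1$--measure, so the set of ``bad'' values $y$ could still have large measure. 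What is actually needed --- and what the paper uses --- is the capacitary weak-type estimate of Lemma~\ref{lem3.4}/Corollary~\ref{lem3.5}: since $\nabla v\in\BV$, one has $\Capp_1(\{x\in G_v:\,|\nabla v(x)|>\delta_1\})\le C_5\|D^2v\|(\R^2)/\delta_1$, which by Lemma~\ref{lem2.2} gives small $\H^1_\infty$--content for large $\delta_1$, and only then does Theorem~\ref{Thh3.3} yield that the corresponding set of values has $\H^1$--measure $<\varepsilon/4$ and can be removed. So the missing idea is precisely that ``$|\nabla v|$ large'' is a set that is small in capacity (equivalently in $\H^1_\infty$), not merely in area; without it your construction of $F_\varepsilon$ does not secure a uniform $\delta_1$.
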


\begin{proof}
(vi) follows from Theorem~\ref{Thh3.3},
Lemma~\ref{lem2.2} and Corollaries~\ref{cor10}, \ref{lem3.5}.
(vii) follows from Lemma~\ref{lem3.1.6}. 
\end{proof}

\begin{proof}[Proof of Theorem~\ref{Th3.1}] 
Fix an arbitrary $\varepsilon>0$
and take the set $F_\varepsilon$ from Lemma~\ref{lem3.6}. From
the above results we have that
$$
\forall y\in F_\varepsilon\quad v^{-1}(y)=\bigcup\limits_{j=1}^{N(y)}S_j(y),
$$
where $S_j(y)$ are cycles and $N(y)\in\mathbb N\cup\{+\infty\}$.

Take a sequence of functions $v_{i}\in C^\infty(\R^2)$ that
approximates $v$ as usual. In particular,
\begin{eqnarray}
\label{t3.1} \forall x\in G_v \quad \nabla v_{i}(x)\to \nabla v(x);\\
\label{t3.2}
\|D^2v_{i}\|(\R^2)=\int\limits_{\R^2}|D^2v_i(x)|\,\dd x\le2\|D^2v\|(\R^2)
\end{eqnarray}
By the coarea formula
\begin{equation}
\label{t3.3}
\int\limits_{v^{-1}(F_\varepsilon)}|\nabla v(x)|\cdot|D^2v_{i}(x)|\,\dd x=
\int\limits_{F_\varepsilon}\sum\limits_{j=1}^{N(y)}\int\limits_{S_j(y)}|D^2v_{i}(x)|
\,\dd \H^1\,\dd y\le 2\delta_1\|D^2v\|(\R^2),
\end{equation}
where the last estimate follows from condition (vi) of
Lemma~\ref{lem3.6}. Consequently there exists a constant $C_7$ such that
\begin{equation}
\label{t3.3.1}
\int\limits_{F_\varepsilon}\sum\limits_{j=1}^{N(y)}\Var(\nabla v_{i},S_j(y))\,\dd 
y\le C_7,
\end{equation}
where $\Var(\nabla v_{i},S_j(y))$ is the variation of $\nabla v_{i}$ on
$S_j(y)$.

\noindent
From (\ref{t3.1}) and the properties (i), (iv) of
Lemma~\ref{cor3.1.2} it is easy to deduce that
\begin{equation}
\label{t3.4}
\Var(\nabla v,S_j(y))\le\liminf\limits_{{i}\to\infty}\Var(\nabla v_{i},S_j(y)),
\end{equation}
consequently,
\begin{equation}
\label{t3.5} \sum\limits_{j=1}^{N(y)}\Var(\nabla v,S_j(y))\le
\liminf\limits_{{i}\to\infty}\sum\limits_{j=1}^{N(y)}\Var(\nabla v_{i},S_j(y)).
\end{equation}
Then by Fatou's lemma
\begin{equation}
\label{t3.6}
\int\limits_{F_\varepsilon}\sum\limits_{j=1}^{N(y)}\Var(\nabla v,S_j(y))\,\dd y
\le\liminf\limits_{{i}\to\infty} \int\limits_{F_\varepsilon}\sum\limits_{j=1}^{N(y)}
\Var(\nabla v_{i},S_j(y))\,\dd y\le C_7.
\end{equation}
Let ${\mathbf\tau}$ denote the tangent vector to $S_j(y)$. By straightforward
geometric considerations we have
\begin{equation}
\label{t3.7} 2\pi\le\Var({\mathbf\tau},S_j(y))\le
\frac{\delta_1}{(\delta_2)^2}\Var(\nabla v,S_j(y))
\end{equation}
From the last two formulas we deduce that $N(y)<\infty$ and
$\sum\limits_{j=1}^{N(y)}\Var({\mathbf\tau},S_j(y))<\infty$ for
almost all $y\in F_\varepsilon$. 
\end{proof}

\noindent
School of Mathematics, Institute for Advanced Study, Einstein Drive, Princeton N.J.~08540, USA\\
e-mail: {\it bourgain@math.ias.edu}
\bigskip

\noindent
Sobolev Institute of mathematics, Acad.~Koptyug pr., 4, Novosibirsk 630090, Russia\\ 
e-mail: {\it korob@math.nsc.ru}
\bigskip

\noindent
Mathematical Institute, University of Oxford, 24--29 St.~Giles', Oxford OX1 4AU, England\\
e-mail: {\it kristens@maths.ox.ac.uk}

\end{document}
{j=[k\delta]+1}^{[k(M+1)]+1}